\documentclass[12pt,reqno]{amsart}

\usepackage{amsmath,amssymb,latexsym,soul,cite,mathrsfs}
\usepackage{mathtools}
\usepackage{enumerate}
\usepackage{lipsum}

\usepackage{color,graphicx}
\usepackage[colorlinks=true,urlcolor=blue,
citecolor=red,linkcolor=blue,linktocpage,pdfpagelabels,
bookmarksnumbered,bookmarksopen]{hyperref}
\usepackage[english]{babel}

\usepackage[left=2.5cm,right=2.5cm,top=2.5cm,bottom=2.5cm]{geometry}

\usepackage[colorinlistoftodos]{todonotes}
\usepackage{dsfont}
\makeatletter
\providecommand\@dotsep{5}
\def\listtodoname{List of Todos}
\def\listoftodos{\@starttoc{tdo}\listtodoname}
\makeatother

\numberwithin{equation}{section}

\newtheorem{theorem}{Theorem}[section]
\newtheorem{proposition}[theorem]{Proposition}
\newtheorem{lemma}[theorem]{Lemma}
\newtheorem{corollary}[theorem]{Corollary}

\newtheorem{remark}{Remark}
\newtheorem{definition}[theorem]{Definition}

\begin{document}
	
	\title[Multiple critical points theorems]{Multiple critical points theorems for a class of nonsmooth functionals and applications to problems driven by 1-Laplacian and discontinuous nonlinearities}

	\author{Ismael Sandro da Silva}
	\author {Marcos T. Oliveira Pimenta}
	\author{Pedro Fellype Pontes}

	\address[Ismael Sandro da Silva]
	{\newline\indent Universidade Estadual da Paraíba
		\newline\indent
		Centro de Ciências Exatas e Sociais Aplicadas
		\newline\indent
		 Patos 58700-070 -- Brazil}
	\email{\href{ismaels@servidor.uepb.edu.br}{ismaels@servidor.uepb.edu.br}}
	
	\address[Marcos Tadeu Oliveira Pimenta]
	{\newline\indent Universidade Estadual Paulista
		\newline\indent
		Departamento de Matemática e Computação
		\newline\indent
		Presidente Prudente 19060-900 -- Brazil}
	\email{\href{marcos.pimenta@unesp.br}{marcos.pimenta@unesp.br}}
	
	\address[Pedro Fellype Pontes]
	{\newline\indent Jilin University
		\newline\indent
		School of Mathematics
		\newline\indent
		Changchun 130012 -- People's Republic of China}
	\email{\href{fellype.pontes@gmail.com}{fellype.pontes@gmail.com}}

	\pretolerance10000
	
	
	\begin{abstract}
	\noindent  In this paper, we present a novel approach to investigate the existence of multiple critical points for a class of nonsmooth functionals. This method provides a robust framework to analyze the existence of solutions for problems involving the 
	$1$-Laplacian operator with discontinuous nonlinearities. Our results contribute to advancing the study of nonsmooth variational problems, by establishing new nonsmooth multiple critical point theorems.  
\end{abstract}

\thanks{Pedro Fellype Pontes was partially supported by NSFC (W2433017), and BSH (2024-002378)}
\subjclass[2020]{Primary: 35J62 Secondary: 35J15, 35J20.}
\keywords{G-index theory; minimax theorems; 1-Laplacian operator; discontinuous nonlinearities}

\maketitle

\section{Introduction}\label{Sec1}

In recent decades, nonlinear partial differential equations with discontinuous nonlinearities have garnered significant attention from researchers. This interest is largely due to their connection to numerous free boundary problems arising in mathematical physics. Examples include the obstacle problem \cite{C2, Y}, the Goldshtik problem for separated flows of incompressible fluids \cite{G, P2}, superconductivity phenomena \cite{P}, the Elenbaas problem related to electric discharge origination \cite{AT, E}, among others. To address these problems, various methods have been employed, such as variational techniques for nondifferentiable functionals, the method of lower and upper solutions, global branching theory, and others. In addition to the previously mentioned studies, we also refer to \cite{CBG,AS,AB,BT,SPS}, and references therein.

Additionally, problems involving the $1$-Laplacian operator have attracted considerable attention in recent years. This highly singular and degenerate operator plays a central role in several applied contexts, including image processing, material science, and minimal surfaces. From the mathematical perspective, a systematic analysis of the $1$-Laplacian can be found in the works \cite{AP,FP,SL,HHL,PV} and the references therein. Although these papers are primarily devoted to the analytical foundations of the operator, their introductions provide detailed discussions of the main applications motivating the theory. 

Two primary approaches are commonly adopted to study such problems. The first relies on analyzing the associated energy functional, which is typically defined in the space $BV(\Omega)$ -- a non-reflexive space (see \eqref{BVdef} below for a definition). In this framework, the energy functional can often be expressed as the difference of a convex and locally Lipschitz functional and a $C^1$ one. The second approach approximates the $1$-Laplacian operator using the $p$-Laplacian operator for $p>1$. By solving the corresponding $p$-Laplacian problem and studying the behavior of solutions as $p \to 1^+$, researchers can gain insights into the original $1$-Laplacian problem. 

Considering this, it is natural to explore problems governed by the $1$-Laplacian operator where the nonlinearity permits discontinuities. Two recent works, \cite{PSJ} and \cite{PJS}, have addressed this topic. The first study examined the problem $ -\Delta_1 u = H(u-\beta)|u|^{q-2}u $, where $ H(\cdot) $ denotes the Heaviside function, $\beta > 0$ is a parameter, and $ 1 < q < 1^* $. The authors proved that for each $\beta > 0$, there exists at least one non-negative solution $u_\beta$.

The second work extended this analysis to the problem $-\Delta_1 u = \lambda H(u-\beta)|u|^{q-2}u + |u|^{1^*-2}u,$
where $\lambda > 0$ is a parameter. It was shown that there exist thresholds $\lambda_0, \beta_0 > 0$ such that the problem admits a non-negative solution $u_{\beta, \lambda}$ for all $\lambda > \lambda_0$ and $0 < \beta < \beta_0$. Both studies utilized the $p$-Laplacian approximation to manage the $1$-Laplacian operator and used the ideas of generalized gradient, due to Chang \cite{C}, to handle the discontinuity in the nonlinearity.

Notably, these works also addressed the stability of solutions, analyzing the behavior as the discontinuity parameter $\beta$ approaches zero. They demonstrated the existence of a limiting solution for the sequence $(u_\beta)$, which corresponds to the continuous case where $H(t) \equiv 1$. This result bridges the discontinuous and continuous settings, offering insights into the transition between the two regimes. However, in neither case is the multiplicity of solutions discussed. To the best of our knowledge, these are the only works in the literature that involve problems governed by the 1-Laplacian with discontinuities in nonlinearity.

Building upon the discussions presented earlier, we investigate the existence and multiplicity of solutions for the following boundary value problem:
	\begin{equation}\label{1.1}
		\left\{ \begin{array}{rclcc} \displaystyle
			-\Delta_1 u & =& f_{\lambda,a}(u), &\mbox{in}& \Omega; \\
			u& =& 0, & \mbox{on}& \partial\Omega,
		\end{array}
		\right.
	\end{equation}
where $\Omega \in \mathbb{R}^N$, with $N\ge2$, is a bounded domain and $\lambda,a>0$ are parameters. The study focuses on two specific forms of the nonlinearity $f_{\lambda,a}$:
	\begin{itemize}
		\item[\underline{\it Case 1:}] $f_{\lambda,a}(t) = \lambda\mbox{sign}(t) + H(t-a) |t|^{q-2}t$;
		\vspace{.3cm}
		
		\item[\underline{\it Case 2:}] $f_{\lambda,a}(t) = \lambda H(t-a) |t|^{q-2}t + |t|^{1^*-2}t$.
	\end{itemize}

In both cases a new technique is used, more precisely we have developed abstract results related to the class of functionals $I=\Phi+\Psi:X\to(-\infty,\infty]$ that are a sum of a locally Lipschitz functional with a lower semicontinuous and convex functional on the Banach space $X$. Such class of functional have been originally introduced in the literature by Motreanu and Panagiotopoulos in \cite{Motreanu} and generalizes simultaneously the single case of locally Lipschitz functional due to Chang \cite{C2} and of Szulkin type-functionals, which are of the form $\Phi_2+\Psi_2:X\to(-\infty,\infty]$, where $\Phi_2$ is a $C^1$-functional and $\Psi_2$ is as $\Psi$ above. See \cite{Szulkin} for a complete study related to Szulkin type-functionals.

Recently, Alves, Bisci and da Silva \cite{ABS} have established some theoretical results that guarantee the existence of multiple critical point for Szulkin type-functionals. In this text, we have extended some multiple critical points results proved in \cite{ABS} for the aforementioned class of functionals $I=\Phi+\Psi$ explored in \cite{Motreanu}. Besides of the abstract theorems related with the multiplicity of critical points for nondifferentiable functionals, our work contains important contributions in the sense of the nonsmooth analysis (see, e.g, Lemmas \ref{caracPS} and \ref{TL} in the sequel).

The theoretical results obtained in our study will be employed in order to get the multiplicity of solution for the problem (\ref{1.1}). Usually, problems involving the 1-Laplacian operator and discontinuous nonlinearities have been addressed by the aforementioned limit process as $p\to 1^+$ dealing with the $p$-Laplacian operator. We refer to the reader the pioneering works \cite{PJS, PSJ}. In general, the approach based on a limit procedure involves several technical difficulties in order to get the multiplicity of existence for (\ref{1.1}). In fact, the estimates involving the $p$-Laplacian operator should be carefully analyzed with respect the dependence on $p>1$. In our approach, we have overcame these difficulties because the theory for nonsmooth functionals allows us to work directly on the space of the functions of bounded of variation $BV(\Omega)$. 

\medskip
Let us recall that, given $I= \Phi+\Psi:X\to(-\infty,\infty]$, with $\Phi$ a locally Lipschitz function and $\Psi$ a convex and lower semicontinuous functional ($\Psi \not\equiv \infty$), a critical point for $I$ means a point $u\in X$ such that $I(u)<\infty$ and
	$$\Phi^\circ(u,v-u)+\Psi(v)-\Psi(u)\geq 0,\quad\forall v\in X,$$
where $\Phi^\circ(u,\cdot)$ designates the generalized gradient of $\Phi$ at $u$ (for more details see Section \ref{Sec2}). This concept enables us to identify a critical point of the nonsmooth energy functional associated with (\ref{1.1}) as a solution of (\ref{1.1}) in the \textit{sense of subdifferentials} such as in \cite{PJS} (see Section \ref{Sec4} below for further details). Hence, one can gets multiple solutions for (\ref{1.1}) by finding for multiple critical points of the energy functional of the problem. 
	
Following upon this direction, by borrowing the ideas of the $G$-index theory introduced in \cite{ABS}, we have proved in this paper an equivariant version of the symmetric Mountain Pass theorem for nonsmooth functionals as $I$ above that verify a symmetry condition with respect the action of a topological group $G$ on $X$ (Theorem \ref{SP1} and Corollary \ref{SP2} in the sequel). We point out our results improve the statements in \cite[Theorem 4.4]{Szulkin}, \cite[Corollary 3.6]{Motreanu}, and \cite[Theorem 3.9]{ABS}. With respect to the technical results in the sense of the nonsmooth analysis, we mention that the Lemma \ref{caracPS} plays a valuable role in understanding of behavior of the so-called $(\rm PS)$ sequences associated with the energy functional of (\ref{1.1}).

\medskip
Related to the problem in (\ref{1.1}) our main applications are the following, in the Case 1:

\begin{theorem}\label{Main1}
	There exists $\lambda_0>0$ such that $I_{\lambda,a}$ has a sequence of critical points $\big(u_{\lambda,a}^{(n)}\big)_{n \in \mathbb{N}}$ for any $\lambda \in (0,\lambda_0)$. Consequently, the problem $(P_{\lambda,a})$ has infinitely many solutions for $\lambda \in (0,\lambda_0)$. Moreover, there exists $a_0>0$ small enough such that each solution $u_{\lambda,a}^{(n)}$  satisfies
	$$\left|\,[|u_{\lambda,a}^{(n)}| > a]\,\right|>0, \eqno{(M_1)}$$
	for any $a \in (0,a_0)$.
\end{theorem}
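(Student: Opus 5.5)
The plan is to apply the equivariant symmetric Mountain Pass result (Theorem \ref{SP1}, or directly Corollary \ref{SP2}) with $G=\mathbb{Z}_2$ acting by $u\mapsto -u$ on $X=BV(\Omega)$. We split $I_{\lambda,a}=\Phi+\Psi$, where
\[
\Psi(u)=\int_\Omega |Du|+\int_{\partial\Omega}|u|\,d\mathcal{H}^{N-1}
\]
is convex and lower semicontinuous on $BV(\Omega)$, and
\[
\Phi(u)=-\lambda\int_\Omega |u|\,dx-\int_\Omega F_a(u)\,dx,\qquad F_a(t)=\int_0^t H(s-a)|s|^{q-2}s\,ds .
\]
Since $F_a$ is locally Lipschitz with subcritical growth ($1<q<1^*$), $\Phi$ is locally Lipschitz on $BV(\Omega)$. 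Both $\Phi$ and $\Psi$ are even, because $|t|$ is even and $F_a(t)$ is even by the symmetric definition of the nonlinearity $\lambda\,\mathrm{sign}(t)+H(|t|-a)\ldots$ appearing in $(P_{\lambda,a})$. Critical points in the sense of the paper are solutions in the subdifferential sense, by the characterization in Section \ref{Sec4}.

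The key steps, in order, are as follows.

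\textbf{(i) The Palais--Smale condition.} Let $(u_n)$ be a (PS)$_c$ sequence. We use Lemma \ref{caracPS} together with the test $v=u_n+tu_n$ (letting $t\to 0^\pm$) to obtain roughly
\[
c+o(1)+o(1)\|u_n\|\ge \Bigl(1-\tfrac1q\Bigr)\int|Du_n|-C\lambda\|u_n\|-C,
\]
which gives boundedness in $BV$. The compact embedding $BV\hookrightarrow L^r$ for $r<1^*$ then yields $u_n\to u$ in $L^r$. Next we use Lemma \ref{TL} (upper semicontinuity of $\Phi^\circ$), with $v=u$ in the critical-point inequality, to get $\Psi(u_n)\to\Psi(u)$. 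This yields (PS) in the sense needed by Corollary \ref{SP2}, which is presumably modulo the strict topology, as for $BV$ Szulkin functionals.

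\textbf{(ii) Geometry.} For each $k$, fix a $k$-dimensional subspace $E_k\subset C_c^\infty(\Omega)$. Since $q>1$ and all norms are equivalent on $E_k$, we have $I_{\lambda,a}\to-\infty$ on $E_k$, uniformly outside a large ball. Near zero, the term $F_a$ vanishes identically on $\{|u|\le a\}$ and is $O(\|u\|^q)$ in general, by Sobolev. Hence on a small sphere $\|u\|=\rho$ we get
\[
I\ge \rho-\lambda C\rho-C\rho^q\ge \rho/4>0
\]
once $\lambda<\lambda_0:=1/(2C)$, where $C$ is the $BV\hookrightarrow L^1$ constant. This gives the infinite family of minimax levels $c_k\ge\rho/4$ from the theorem. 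Alternatively, the complementary-subspace form of Theorem \ref{SP1} gives $c_k\to\infty$, and hence infinitely many distinct critical points $u^{(n)}_{\lambda,a}$.

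\textbf{(iii) Property $(M_1)$.} The main obstacle is making $a_0$ uniform in $n$. Suppose $|[|u|>a]|=0$, i.e.\ $|u|\le a$ a.e. Then the nonlinearity reduces to $\lambda\,\mathrm{sign}(u)$ and $F_a(u)=0$, so testing the critical-point inequality with $v=0$ and $v=2u$ gives
\[
\int|Du|+\int_{\partial\Omega}|u|=\lambda\int|u|\le \lambda C\Bigl(\int|Du|+\int_{\partial\Omega}|u|\Bigr).
\]
Since $\lambda C<1$, this forces $u=0$, which contradicts $I(u)=c_n\ge\rho/4>0$. This argument in fact works for every $a>0$, so any $a_0$ suffices. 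If the levels were handled with an $a$-dependent $\rho$, we would instead pick $a_0$ so that $\rho$ is independent of $a$ for $a<a_0$. The delicate points I would check most carefully are the strict-topology compactness in step (i) and the justification of the test functions in the subdifferential inequality.
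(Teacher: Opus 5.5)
There are two genuine gaps: one in the functional framework, and one in the proof of $(M_1)$.

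\textbf{Ambient space and $(\mathrm{PS})$.} You take $X=BV(\Omega)$ with its norm. Corollary \ref{SP2}, however, rests on the deformation Lemma \ref{Def}, which needs every $(\mathrm{PS})$ sequence to have a subsequence converging in the norm of $X$. Your step (i) gives only BV-boundedness, convergence in $L^r(\Omega)$, and $\Psi(u_n)\to\Psi(u)$. That is strict convergence, not norm convergence in $BV(\Omega)$, and nothing in the paper supports a ``modulo the strict topology'' version of the abstract results. So the hypothesis of Corollary \ref{SP2} is not verified, and it is not expected to hold in the BV norm.

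The fix is the paper's choice $X=L^q(\Omega)$, with $\Psi\equiv+\infty$ off $BV(\Omega)$. Your BV bound is fine: via Lemmas \ref{caracPS} and \ref{realiza} one has $\Psi(u_n)=\lambda\int\rho_{1,n}u_n+\int\rho_{2,n}u_n+\int w_nu_n$ and $\frac1q\rho_{2,n}u_n-G_a(u_n)\ge 0$. Combined with the compact embedding $BV(\Omega)\hookrightarrow L^q(\Omega)$, this is exactly the $(\mathrm{PS})$ condition required. Your geometry carries over with spheres in the $L^q$-norm, $Y=\{0\}$ and $Z=X$.

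\textbf{Property $(M_1)$.} If $|u|\le a$ a.e., it is true that $G_a(u)=0$. But the right-hand side does \emph{not} reduce to $\lambda\,\mathrm{sign}(u)$: on $[|u|=a]$ one has $\rho_2\in\partial G_a(\pm a)=\pm[0,a^{q-1}]$, and for the $1$-Laplacian this set typically has positive measure. The correct identity is $\Psi(u)=\lambda\int_\Omega|u|\,dx+\int_{[|u|=a]}\rho_2u\,dx$, so $u=0$ does not follow.

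Your claim that the argument works for every $a>0$ is in fact false. Take $B_R=B_R(0)\subset\subset\Omega$ with $\lambda\le N/R\le\lambda+a^{q-1}$, which is possible once $a$ is large. Set $u=a\chi_{B_R}$, with $\mathbf{z}=-x/R$ in $B_R$, $\mathbf{z}=-R^{N-1}x/|x|^N$ outside, $\rho_1=\chi_{B_R}$ and $\rho_2=(N/R-\lambda)\chi_{B_R}$. This is a nontrivial solution with $|[|u|>a]|=0$, and it is indeed a critical point since $F$ and $G_a$ are convex. So smallness of $a$ is essential.

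Keeping the $\rho_2$ term repairs your argument. From $0\le\rho_2u\le a^{q-1}|u|$ you get $\Psi(u)\le(\lambda+a^{q-1})C\,\Psi(u)$, which forces $u=0$ when $(\lambda+a^{q-1})C<1$. This contradicts $I_{\lambda,a}(u)\ge\tau>0$. The paper instead substitutes the identity into the energy: $I_{\lambda,a}(u)=\int_\Omega\rho_2u\,dx\le a^q|\Omega|$, which falls below the $a$-independent bound $\tau$ for $a$ small.
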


In Case 2:

\begin{theorem}\label{Main2}
	For any $n \in \mathbb{N}$, there exists $\lambda_n > 0$ such that for every $\lambda \in [\lambda_n, \infty)$, there exists $a_\lambda > 0$ satisfying the following: the problem $(Q_{\lambda,a})$ admits at least $n$ nontrivial solutions for any $a \in (0, a_\lambda)$. Moreover, for each $\lambda \in [\lambda_n, \infty)$, there exists $0 < a_0 \leq a_\lambda$ such that:  
	$$|[|u| > a]| > 0, \quad \forall \, a \in (0, a_0), \eqno{(M_2)}$$
	where $u$ is a solution of $(Q_{\lambda,a})$.
\end{theorem}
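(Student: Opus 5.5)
We work in $BV(\Omega)$ with the functional $I_{\lambda,a}=\Phi+\Psi$. Here $\Psi(u)=\int_\Omega|Du|+\int_{\partial\Omega}|u|\,d\mathcal H^{N-1}$ is convex and lower semicontinuous on $L^{1^*}$-type spaces. The remaining part is $\Phi(u)=-\frac{\lambda}{q}\int_\Omega H(u-a)|u|^q\,dx-\frac{1}{1^*}\int_\Omega|u|^{1^*}dx$, or rather its regularized primitive $-\lambda\int_\Omega F_a(u)-\frac{1}{1^*}\|u\|_{1^*}^{1^*}$, where $F_a(t)=\int_0^t H(s-a)|s|^{q-2}s\,ds$. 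This $\Phi$ is locally Lipschitz, and it is even once the nonlinearity is symmetrized as in $(Q_{\lambda,a})$. The plan is to apply the equivariant symmetric mountain pass result (Corollary \ref{SP2}) with $G=\mathbb Z_2$ in its finite-multiplicity form: if $I$ is even, $I(0)=0$, $I$ satisfies $(\mathrm{PS})_c$ for $c<c^*$, $I\ge\alpha>0$ on a small sphere, and there is an $n$-dimensional subspace $E_n$ with $\sup_{E_n}I<c^*$, then there are at least $n$ pairs of nontrivial critical points. The threshold is $c^*=\frac1N S_1^N$, where $S_1$ is the best constant of the embedding $BV\hookrightarrow L^{1^*}$.

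First, the geometry. Near $0$ we have $F_a(u)=0$ wherever $|u|\le a$, so $\int F_a(u)\le C\|u\|_{1^*}^{q}$ restricted to the set $[|u|>a]$. Since $q>1$ and $1^*>1$, we get $I(u)\ge \|u\|-C\lambda\|u\|^q-C\|u\|^{1^*}\ge\alpha_\lambda>0$ on a small sphere. Next we take $E_n$ spanned by $n$ functions $\varphi_1,\dots,\varphi_n\in C_c^\infty(\Omega)$ with disjoint supports. On $E_n$ all norms are equivalent, and for $a\le a_\lambda$ small we have $F_a(t)\ge \frac1q|t|^q-\frac{a^q}{q}$. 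This gives
$$\sup_{E_n}I\le\sup_{t\ge0}\Big(Ct-\lambda c\,t^q+\lambda C a^q|\Omega|\Big).$$
The first two terms contribute $O(\lambda^{-1/(q-1)})$, which is below $c^*/2$ for $\lambda\ge\lambda_n$. We then shrink $a_\lambda$ so that the $a^q$ error is also below $c^*/2$.

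The main obstacle is the local $(\mathrm{PS})_c$ condition below $c^*$ in the nonreflexive $BV$ setting with a discontinuous $\Phi$. Here we rely on Lemma \ref{caracPS}. A $(\mathrm{PS})_c$ sequence yields $\rho_n\in\partial\Phi(u_n)$ such that $\Psi(v)-\Psi(u_n)\ge\langle\rho_n,v-u_n\rangle-\epsilon_n\|v-u_n\|$, with $\rho_n$ represented by $\lambda\,\xi_n+|u_n|^{1^*-2}u_n$, where $\xi_n(x)\in[\underline f,\overline f](u_n(x))$. Testing with $v=2u_n$ and $v=0$ gives boundedness, because $q>1$ allows an Ambrosetti–Rabinowitz-type comparison. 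Passing to a subsequence, $u_n\to u$ in $L^r$ for $r<1^*$ and weakly-$*$ in $BV$. The discontinuous term then converges by the compact embedding, since $F_a$ is Lipschitz on bounded sets. A Brezis–Lieb / concentration-compactness argument for $|Du_n|$ as measures shows that any loss of compactness costs at least $\frac1NS_1^N$, which is excluded when $c<c^*$. We then verify that $u$ is a critical point via lower semicontinuity of $\Psi$, and that the limits $\xi$ lie in the generalized gradient.

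Finally, for $(M_2)$: suppose a nontrivial critical point $u$ had $|u|\le a$ almost everywhere. Then the $\lambda$-term vanishes, and $u$ solves $-\Delta_1u=|u|^{1^*-2}u$. This forces $\|u\|\le S_1^{-1}\ldots$ via $\|u\|=\|u\|_{1^*}^{1^*}\le a^{1^*-1}\|u\|_{1}\le Ca^{1^*-1}\|u\|$, which is impossible for $a<a_0$ small. Hence $|[|u|>a]|>0$, which completes the argument for Theorem \ref{Main2}.
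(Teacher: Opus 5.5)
Your skeleton matches the paper's. It is Theorem \ref{SP1} with $G=\mathbb{Z}_2$, $Y=\{0\}$, $V=X_n$. That theorem, not Corollary \ref{SP2}, is the ``finite-multiplicity form'' you need, because $(\mathrm{PS})$ holds only below $\frac1N S^N$. Note that $M_0=\overline{B}_{R_n}\cap X_n$ itself lies in every $\Lambda_j$, so $c_j\le\sup_{X_n}J_{\lambda,a}$. The rest is the same: local $(\mathrm{PS})_c$ for $c<\frac1N S^N$, and $\lambda_n$ chosen before $a_\lambda$. Your bound $F_a(t)\ge\frac1q|t|^q-\frac{a^q}{q}$, valid for every $a$, gives Lemma \ref{JPS} more directly than the paper's splitting into $\Omega_a,\Omega^a$.

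Two sub-arguments genuinely differ from the paper's.

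\emph{The $(\mathrm{PS})_c$ condition.} The paper uses the Degiovanni--Magrone truncations $T_k,R_k$ and Lemma \ref{auxlemma}. You use concentration--compactness in $BV$, which is a valid alternative, but you omit the decisive step. Extend $u_n$ by zero, so that $\Psi(u)=\int_{\mathbb{R}^N}|D\tilde u|$ and boundary atoms are covered. Test the variational inequality with $v=(1-\phi)u_n$, $0\le\phi\le 1$. Then use $\Psi((1-\phi)u_n)-\Psi(u_n)\le-\int\phi\,d|D\tilde u_n|+\int|u_n||\nabla\phi|\,dx$ and $\Phi^\circ(u_n;-\phi u_n)\le\lambda\int\phi|u_n|^q dx+\int\phi|u_n|^{1^*}dx$ to get $\mu_j\le\nu_j$ at each atom. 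Combine this with $S\nu_j^{1/1^*}\le\mu_j$ and with $J_{\lambda,a}(u_n)=\frac1N\|u_n\|_{1^*}^{1^*}+\lambda\int(\xi_nu_n-F_a(u_n))dx+o_n(1)\ge\frac1N\|u_n\|_{1^*}^{1^*}+o_n(1)$. This excludes atoms below the threshold.

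\emph{The property $(M_2)$.} The paper argues as in Theorem \ref{Main1}, via $J_{\lambda,a}(u)\ge\tau$ with $\tau$ independent of $a$, which covers only the minimax critical points. Your route through the Euler--Lagrange identity covers every nontrivial solution, which is an improvement.

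Two corrections are needed.

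First, in $(M_2)$ the $\lambda$-term does not vanish when $|u|\le a$ a.e. On $[|u|=a]$, which can have positive measure for a $BV$ function, $\partial F_a(u)$ is $[0,a^{q-1}]$ or $[-a^{q-1},0]$, so $u$ need not solve $-\Delta_1u=|u|^{1^*-2}u$. The repair is short. Lemma \ref{realiza} gives $\Psi(u)=\lambda\int\rho u\,dx+\|u\|_{1^*}^{1^*}$ with $0\le\rho u\le a^{q-1}|u|$. Hence $\Psi(u)\le(\lambda a^{q-1}+a^{1^*-1})\|u\|_1\le C(\lambda a^{q-1}+a^{1^*-1})\Psi(u)$. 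This is impossible for $u\neq0$ and $a<a_0(\lambda)$, and that dependence on $\lambda$ is what the statement allows.

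Second, the ambient space must be $X=L^{1^*}(\Omega)$ with $\Psi\equiv+\infty$ off $BV(\Omega)$, as in the paper, not $BV(\Omega)$ itself. Lemma \ref{Def} requires $(\mathrm{PS})$ in the norm of $X$. In the $BV$ norm that would mean $\|u_n-u\|_{BV}\to0$, which concentration--compactness does not give (at best $\Psi(u_n)\to\Psi(u)$, i.e.\ strict convergence) and which cannot be expected in general. In $L^{1^*}(\Omega)$, absence of atoms gives $\|u_n\|_{1^*}\to\|u\|_{1^*}$, hence strong convergence, which is exactly what is needed. Checking that the limit $u$ is critical is not part of $(\mathrm{PS})$.
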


We would like to emphasize that in the both Case 1 and 2 we have complemented the situations studied in \cite{PJS, PSJ}. We also point out that in the Case 1 we are dealing with a multiplicity of discontinuities, namely, at 0 and at $a$. This fact marks a novelty for this class of problems. To the best our knowledge, this is the first time the multiplicity of solution is analyzed for quasilinear involving concomitantly the $1$-Laplacian operator and discontinuous nonlinearity

The article is essentially organized as follows: in Section \ref{Sec2}, we present the main tools and technical results related to non-smooth analysis. Section \ref{Sec3} is dedicated to proving our main theoretical results. Finally, in Section \ref{Sec4}, we analyze the existence of multiple solutions for \eqref{1.1} in Cases 1 and 2; specifically, Subsection \ref{Ap1} deals with Case 1, while Subsection \ref{Ap2} is dedicated to Case 2.

\section{On nonsmooth analysis}\label{Sec2}
Before presenting our main results, this section is devoted to recall some notions related to the study of nonsmooth analysis and the generalized Critical Point Theory. For further details and proofs we refer the works \cite{C,CLM,Clarke, Clarke1, Motreanu}. Hereafter, we will consider $(X,\|\cdot \|) $ and $(X^*,\|\cdot \|_*)$ a Banach space and its topological dual space. As usual, we will designate by $\langle \cdot, \cdot \rangle$ the duality pairing between $X$ and $X^*$.

To begin with, we recall that a real-valued functional $\Phi:X \rightarrow \mathbb{R} $ is called \textit{locally Lipschitz continuous} (briefly $\Phi\in {\textrm{Lip}}_{\textrm{loc}}(X, \mathbb{R})$) when to every $u \in X$ there
correspond a neighbourhood $V\coloneqq V_u$ of $u$ and a constant $L\coloneqq L(u) >0$ such that
\begin{equation*}
|\Phi(v)-\Phi(w)| \leq L \|v-w\|,\,\,\;\; \forall v,w \in V.
\end{equation*}
The \textit{generalized directional derivative} of $\Phi\in {\textrm{Lip}}_{\textrm{loc}}(X, \mathbb{R})$ at $u$ along the direction $v \in X$ is defined by
\begin{equation*}
\Phi^{\circ}(u;v)\coloneqq \limsup_{w\rightarrow u,\, t \rightarrow 0^+}\frac{\Phi(w+tv)-\Phi(w)}{t}.
\end{equation*}
The \textit{generalized gradient} of the function $\Phi\in {\textrm{Lip}}_{\textrm{loc}}(X, \mathbb{R})$ in $u$ is the set
\begin{equation*}
\partial \Phi(u)= \{\phi \in X^{*}\;\; : \; \; \Phi^{\circ}(u;v) \geq \left<\phi, v\right>,\,\forall\; v \in X \}.
\end{equation*}
It is well known that $\partial \Phi(u)$ turns out nonempty, convex and weak*-compact  subset of $X^*$, for any $u \in X$. We say that $u \in X$ is critical point of $\Phi$ if $0 \in \partial \Phi(u)$. We list below some important properties of the generalized gradient of $\Phi$.
\begin{proposition}\label{21}
	Given $\Phi_1$, $\Phi_2 \in {\rm Lip_{loc}}(X,\mathbb{R})$, the following items hold.
	\begin{enumerate}
		\item[$i)$] For any $u\in X$, $\Phi_1^{\circ}(u,\cdot)$ is a continuous and convex function with $\Phi_1^{\circ}(u,0)=0$.
		\item[${ii)}$] the map $(u,v) \mapsto \varphi^\circ(u,v)$ is an upper semicontinuous functional, i.e, if \linebreak $(u_j,v_j) \rightarrow (u,v)$, then
		$$ \limsup \Phi_1^\circ(u_j,v_j) \leq \Phi_1^\circ(u,v);$$
		\item[$iii)$] If $\Phi_1$ is continuously differentiable, then $\Phi_1^{\circ}(u,v)=\langle \Phi_1'(u),v \rangle$ and $\partial \Phi_1(u)=\{\Phi_1'(u)\}$.
		\item[$iv)$] For any $u \in X$, $\lambda \in \mathbb{R}$, it holds
		$\partial(\Phi_1+\Phi_2)(u)\subset \partial \Phi_1(u)+\partial \Phi_2(u),$
		and
		$\partial \Phi_1(\lambda u)=\lambda \Phi_1(u)$.
	\end{enumerate}
\end{proposition}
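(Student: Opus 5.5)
The plan is to derive every item directly from the definition of $\Phi^\circ$ as a $\limsup$, using only the local Lipschitz bound and the Hahn--Banach theorem, in the order i), ii), iii), iv). Throughout, fix $u$, a neighbourhood $V$ of $u$ and a constant $L$ with $|\Phi_1(v)-\Phi_1(w)|\le L\|v-w\|$ on $V$.

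For i), the Lipschitz bound gives $|\Phi_1(w+tv)-\Phi_1(w)|/t\le L\|v\|$ for $w$ near $u$ and $t$ small. Hence $|\Phi_1^\circ(u;v)|\le L\|v\|$, and $\Phi_1^\circ(u;0)=0$ is immediate.
\begin{itemize}
\item \emph{Positive homogeneity.} For $\lambda>0$, substitute $s=\lambda t$ in the difference quotient.
\item \emph{Subadditivity.} Write
\[
\frac{\Phi_1(w+t(v_1+v_2))-\Phi_1(w)}{t}=\frac{\Phi_1(w'+tv_1)-\Phi_1(w')}{t}+\frac{\Phi_1(w+tv_2)-\Phi_1(w)}{t},
\]
with $w'=w+tv_2\to u$, and take $\limsup$.
\end{itemize}
Sublinearity gives convexity. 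Combining subadditivity with the bound yields $|\Phi_1^\circ(u;v)-\Phi_1^\circ(u;v')|\le L\|v-v'\|$, so $\Phi_1^\circ(u;\cdot)$ is continuous.

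For ii), take $(u_j,v_j)\to(u,v)$. By the definition of $\limsup$, for each $j$ there are $w_j$ and $t_j\in(0,1/j)$ with $\|w_j-u_j\|<1/j$ and
\[
\Phi_1^\circ(u_j;v_j)\le \frac{\Phi_1(w_j+t_jv_j)-\Phi_1(w_j)}{t_j}+\frac1j .
\]
Replacing $v_j$ by $v$ in the quotient costs at most $L\|v_j-v\|\to0$, because $w_j+t_jv$ and $w_j+t_jv_j$ both lie in $V$ for large $j$. Since $w_j\to u$ and $t_j\downarrow0$, the $\limsup$ of the remaining quotients is at most $\Phi_1^\circ(u;v)$. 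I expect this diagonal choice of $(w_j,t_j)$ to be the only point needing care.

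For iii), assume $\Phi_1\in C^1$ and use the mean value theorem: $\Phi_1(w+tv)-\Phi_1(w)=\langle \Phi_1'(w+\theta tv),v\rangle\,t$ for some $\theta\in(0,1)$. Continuity of $\Phi_1'$ then gives that the $\limsup$ is an actual limit equal to $\langle\Phi_1'(u),v\rangle$. Consequently $\phi\in\partial\Phi_1(u)$ means $\langle\Phi_1'(u)-\phi,v\rangle\ge0$ for all $v\in X$. Applying this to $\pm v$ forces $\phi=\Phi_1'(u)$, so $\partial\Phi_1(u)=\{\Phi_1'(u)\}$.

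For iv), the definition directly gives $(\Phi_1+\Phi_2)^\circ(u;v)\le \Phi_1^\circ(u;v)+\Phi_2^\circ(u;v)$, since the $\limsup$ of a sum is at most the sum of the $\limsup$s. Take $\phi\in\partial(\Phi_1+\Phi_2)(u)$; we must show $\phi\in K:=\partial\Phi_1(u)+\partial\Phi_2(u)$.
\begin{itemize}
\item \emph{$K$ is weak$^*$-closed and convex.} Each summand is nonempty, convex and weak$^*$-compact, so $K$ is weak$^*$-compact and convex.
\item \emph{Support function of $K$.} By Hahn--Banach, sublinear continuous $p$ satisfies $p(v)=\max\{\langle\psi,v\rangle:\psi\in X^*,\ \psi\le p\}$. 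Hence $\Phi_i^\circ(u;\cdot)$ is the support function of $\partial\Phi_i(u)$, and $\sigma_K=\Phi_1^\circ(u;\cdot)+\Phi_2^\circ(u;\cdot)$.
\item \emph{Separation.} If $\phi\notin K$, weak$^*$ separation gives $v\in X$ with $\langle\phi,v\rangle>\sigma_K(v)\ge(\Phi_1+\Phi_2)^\circ(u;v)$, a contradiction.
\end{itemize}
This separation step is the main conceptual obstacle.

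The scaling identity in iv) is evidently intended as $\partial(\lambda\Phi_1)(u)=\lambda\,\partial\Phi_1(u)$; the displayed version $\partial\Phi_1(\lambda u)=\lambda\Phi_1(u)$ is not correct as written, and I will prove the intended form. For $\lambda\ge0$ it follows from $(\lambda\Phi_1)^\circ=\lambda\Phi_1^\circ$. For $\lambda=-1$, the substitution $w'=w+tv$ gives $(-\Phi_1)^\circ(u;v)=\Phi_1^\circ(u;-v)$, which yields $\partial(-\Phi_1)(u)=-\partial\Phi_1(u)$. The general negative case combines these two.
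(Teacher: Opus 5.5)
Your proposal is correct. The paper gives no proof of this proposition and simply defers to the standard literature (Chang, Clarke), and your derivation is essentially the standard argument found there: sublinearity plus the local Lipschitz bound for $i)$, the diagonal choice of $(w_j,t_j)$ for $ii)$, the mean value theorem for $iii)$, support functions with weak$^*$ separation for the sum rule in $iv)$, and the reading of the scaling identity as $\partial(\lambda\Phi_1)(u)=\lambda\,\partial\Phi_1(u)$, proved via $\lambda\ge0$ and $(-\Phi_1)^\circ(u;v)=\Phi_1^\circ(u;-v)$.
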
 

Given a convex functional $\psi:X\rightarrow\mathbb{R}$, the \textit{subdifferential} of $\psi$ at $u$ is the set
\label{sd}\begin{equation*}
\partial \psi(u)\coloneqq\{\phi \in X^* \;\; : \;\; \psi(v)-\psi(u)\geq \left<\phi,v-u\right>, \forall\,v \in X\}.
\end{equation*}
Hence, for a functional $\Phi \in {\rm Lip_{loc}}(X,\mathbb{R})$, using the item-$i)$ of the Proposition \ref{21}, the following characterization holds.
\begin{equation}\label{sd1}
	\partial \Phi(u) = \partial \Phi^{\circ}(u,\cdot)(0),
\end{equation}
where the right side of the equality denotes the subdifferential of the convex function $\Phi^{\circ}(u,\cdot)$ at 0. Still related with locally Lipschtz functionals, we present below an useful result involving generalized gradients.
\begin{lemma}\label{incgrad1}
	Consider $\mathcal{F} : X\to\mathbb{R}$ and $F:Y \to \mathbb{R}$, where $Y$ is a dense subspace of $X$ and
		$$ F \equiv \mathcal{F}\mid_{Y}.$$
	Then, the inclusion $\partial F(u) \subseteq \partial \mathcal{F}(u)$ holds for any $u \in Y$.
\end{lemma}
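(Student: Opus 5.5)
The plan is to compare the two generalized directional derivatives on the dense subspace and then pass to the whole space by density and continuity. Throughout, $Y$ carries the norm induced by $X$, and $\mathcal{F}\in{\rm Lip_{loc}}(X,\mathbb{R})$; these are the implicit assumptions needed for $\partial F$ and $\partial\mathcal{F}$ to make sense. Then $F$ is locally Lipschitz on $Y$ with the same local constants. Since $Y$ is dense, restriction $X^*\to Y^*$ is an isometric isomorphism: every $\phi\in Y^*$ has a unique norm-preserving extension $\tilde\phi\in X^*$. The inclusion $\partial F(u)\subseteq\partial\mathcal{F}(u)$ is understood via this identification.

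\textbf{Step 1 (comparison on $Y$).} Fix $u,v\in Y$. In the definition of $F^\circ(u;v)$ the $\limsup$ is taken over $w\in Y$ with $w\to u$ and $t\to0^+$. For such $w,t$ we have $w+tv\in Y$, so the difference quotients of $F$ coincide with those of $\mathcal{F}$. Hence $F^\circ(u;v)$ is a $\limsup$ of the same quotients over a smaller set of pairs $(w,t)$ than $\mathcal{F}^\circ(u;v)$. This gives
$$F^\circ(u;v)\le \mathcal{F}^\circ(u;v),\qquad \forall\, v\in Y.$$

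\textbf{Step 2 (inequality on $Y$).} Let $\phi\in\partial F(u)$ and let $\tilde\phi\in X^*$ be its extension. By the definition of the generalized gradient and Step 1,
$$\langle\tilde\phi,v\rangle=\langle\phi,v\rangle\le F^\circ(u;v)\le\mathcal{F}^\circ(u;v)\quad\text{for all } v\in Y.$$

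\textbf{Step 3 (density).} Given $v\in X$, choose $v_n\in Y$ with $v_n\to v$ in $X$. Then $\langle\tilde\phi,v_n\rangle\to\langle\tilde\phi,v\rangle$ because $\tilde\phi\in X^*$. Also $\mathcal{F}^\circ(u;v_n)\to\mathcal{F}^\circ(u;v)$ by Proposition \ref{21} $i)$; in fact $\mathcal{F}^\circ(u;\cdot)$ is Lipschitz with the local constant $L(u)$. Passing to the limit in Step 2 yields $\langle\tilde\phi,v\rangle\le\mathcal{F}^\circ(u;v)$ for all $v\in X$, that is, $\tilde\phi\in\partial\mathcal{F}(u)$.

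The only delicate point is making the identification of $Y^*$ with $X^*$ explicit. This is what makes the inclusion meaningful, and it relies on both the density of $Y$ and the fact that $Y$ carries the norm of $X$. The remaining steps are routine.
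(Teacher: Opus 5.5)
Your argument is correct if $Y$ carries the norm of $X$. But in that case the lemma has essentially no content: one even gets equality, since by density and local Lipschitz continuity the $\limsup$ over $w\in Y$ coincides with the one over $w\in X$. It is also not how the lemma is used. In Lemma \ref{incgrad} it is applied with $X=L^q(\Omega)$ and $Y=L^{1^*}(\Omega)$ carrying its own norm $\|\cdot\|_{1^*}$, and $I_a$ is locally Lipschitz with respect to that norm. So $Y$ is a Banach space continuously and densely embedded in $X$ with a stronger norm.

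In that setting $Y^*=L^N(\Omega)$ is strictly larger than $X^*=L^{q/(q-1)}(\Omega)$. The identification $Y^*\cong X^*$ on which your Step 2 rests is therefore false: a general $\phi\in Y^*$ has no continuous extension to $X$. The whole point of the inclusion is that elements of $\partial F(u)$ are nevertheless $X$-continuous. This is exactly what the paper exploits in Lemma \ref{auxlemma}, passing from $\rho_n\in L^N(\Omega)$ to $\rho_n\in L^{q/(q-1)}(\Omega)$. Your remark that the induced norm is needed for $\partial F$ to make sense is also not right; any norm on $Y$ for which $F$ is locally Lipschitz will do.

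The missing step is short.
\begin{itemize}
\item[(1)] Your Step 1 survives. Continuity of the embedding means $w\to u$ in $Y$ implies $w\to u$ in $X$, so the $\limsup$ defining $F^\circ(u;v)$ runs over fewer sequences, and $F^\circ(u;v)\le\mathcal{F}^\circ(u;v)$ for $v\in Y$.
\item[(2)] Combine this with the bound $\mathcal{F}^\circ(u;v)\le L(u)\|v\|_X$, which comes from local Lipschitz continuity of $\mathcal{F}$ near $u$. Apply it to both $v$ and $-v$: for every $\phi\in\partial F(u)$,
\[
|\langle\phi,v\rangle|\le L(u)\|v\|_X\qquad\text{for all } v\in Y.
\]
\item[(3)] Hence $\phi$ is continuous for the $X$-norm on the dense subspace $Y$, so it extends uniquely to some $\tilde\phi\in X^*$. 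Injectivity of the restriction map $X^*\to Y^*$ is what makes the inclusion $\partial F(u)\subseteq\partial\mathcal{F}(u)$ meaningful.
\end{itemize}
From there your Step 3 applies verbatim and gives $\tilde\phi\in\partial\mathcal{F}(u)$.
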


Now, we introduce the main class of functionals to be considered on this paper. In the sequel, we will say that a functional $I:X\longrightarrow (-\infty,\infty]$ is a \textit{Motreanu-Panagiotopoulos functional} if $I$ is of the form
\\ \\
$(H_1):$ $I=\Phi+\Psi: X\longrightarrow (-\infty, \infty]$, with $\Phi \in {\rm Lip_{loc}} (X,\mathbb{R})$ and $\Psi$ is a convex lower semicontinuous (l.s.c) functional and proper, i.e., $\Psi \not\equiv \infty$.
\\

We will also write briefly $I \in (H_1)$ to mean that $I$ is a Motreanu-Panagiatopoulos functional. The \textit{effective domain} of a Motreanu-Panagiatopoulos functional is the set
$$D(I)\coloneqq\{u\in X;\, I(u)<\infty\}.$$
In this way, it is easily seen that $D(I)=D(\Psi)$.

Next, we present the main tools related with the Critical Point Theory for functionals $I\in (H_1)$. 

\begin{definition}\label{cp}
	Suppose that $I\in (H_1)$. Then,	
	\begin{enumerate}
		\item[$i)$] A point $u \in X$ is said to be a critical point of $I$ if $u \in D(I)$ and
		\begin{equation*}
		\Phi^{\circ}(u,v-u) + \Psi(v)-\Psi(u) \geq 0,\,\,\, \,\,\forall v \in X,
		\end{equation*}
		or, equivalently (see \cite[Proposition 2.183]{CLM}), if
		$$0 \in \partial \Phi(u) +\partial \Psi(u).$$
		\item[$ii)$] A sequence $(u_n)$ is called a Palais-Smale sequence for $I$ at level $c \in \mathbb{R}$ $($briefly $(\rm PS)_c$ or $(\rm PS)$ sequence$)$ if $I(u_n) \rightarrow c$ and
		\begin{equation*}
		\Phi^{\circ}(u_n,v-u_n) + \Psi(v)-\Psi(u_n) \geq -\varepsilon_n\|v-u_n\|,\,\,\,\,\, \forall v \in X,
		\end{equation*}
		with $\varepsilon_n\rightarrow 0^+$, which is equivalent to (see \cite[Proposition 3.1]{Motreanu}) $I(u_n)\rightarrow c$ and 
		$$\Phi^{\circ}(u_n,v-u_n) + \Psi(v)-\Psi(u_n) \geq \langle w_n,v-u_n\rangle,\,\,\,\,\, \forall v \in X,$$
		with $w_n\rightarrow 0$ in $X^*$. 
		\item[$iii)$] We will say that $I$ satisfies the Palais-Smale condition $($briefly $(\rm PS)_c$ condition$)$ at level $c\in \mathbb{R}$ when each $(\rm PS)$ sequence $(u_n)$ at a level $c$ has a convergent subsequence. If $I$ verifies the $(\rm PS)$ condition for all level $c$, we say simply that $I$ satisfies the $(\rm PS)$ condition.
		\end{enumerate}
	\end{definition}
	
	\begin{remark}
		{\rm By taking $\Psi\equiv 0$ in the previous definition, the notions of critical point and $(\rm PS)$ sequences for a functional $I\in (H_1)$ generalize the singular case of functionals $\Phi \in {\rm Lip_{loc}}(X,\mathbb{R})$ due to Chang \cite{C}, as well as the classical case of $C^1$-functionals.} 
	\end{remark}

	Next, we prove an important characterization of $(\rm PS)$ sequences to functionals $I\in (H_1)$.
	
	\begin{lemma}\label{caracPS}
		Suppose that $I\in (H_1)$. Then, a sequence $(u_n)$ is a $(\rm PS)_c$ sequence for $I$ if, and only if, $I(u_n)\rightarrow c$ and there exists a sequence $(w_n)$ in $X^*$, $w_n \rightarrow 0$ in $X^*$, satisfying
		$$ w_n \in \partial \Phi(u_n)+\partial \Psi(u_n),\,\,\,n\in \mathbb{N}.$$
	\end{lemma}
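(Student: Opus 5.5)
The plan is to reduce both directions to the second form of the (PS) condition in Definition \ref{cp} $ii)$, the one with $w_n\to 0$ in $X^*$, which is already known to be equivalent to the first form by \cite[Proposition 3.1]{Motreanu}. For each fixed $n$, the task is then to show that
$$\Phi^{\circ}(u_n,v-u_n)+\Psi(v)-\Psi(u_n)\ge \langle w_n,v-u_n\rangle\quad \forall v\in X \quad\Longleftrightarrow\quad w_n\in\partial\Phi(u_n)+\partial\Psi(u_n).$$
This is the statement "$u$ is a critical point of $I-w$" versus "$0\in\partial(\Phi-w)(u)+\partial\Psi(u)$". The key observation is that $\Phi_n:=\Phi-w_n$ is again locally Lipschitz. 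Since $w_n$ is linear and continuous, Proposition \ref{21} $iii)$ and $iv)$ give $\Phi_n^\circ(u,z)=\Phi^\circ(u,z)-\langle w_n,z\rangle$, and hence $\partial\Phi_n(u)=\partial\Phi(u)-w_n$. For the first identity I will verify both inequalities directly from the limsup definition, because the linear part contributes exactly $-\langle w_n,z\rangle$. The second identity then follows from \eqref{sd1}.

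\textbf{($\Leftarrow$).} Suppose $w_n=\xi_n+\eta_n$ with $\xi_n\in\partial\Phi(u_n)$ and $\eta_n\in\partial\Psi(u_n)$; in particular $u_n\in D(\Psi)$. For every $v$, the definition of the generalized gradient gives $\Phi^\circ(u_n,v-u_n)\ge\langle\xi_n,v-u_n\rangle$, and the definition of the subdifferential gives $\Psi(v)-\Psi(u_n)\ge\langle\eta_n,v-u_n\rangle$. Adding the two yields the displayed inequality, so $(u_n)$ is a $(\rm PS)_c$ sequence.

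\textbf{($\Rightarrow$).} Given the inequality, $u_n$ is a critical point of the functional $\Phi_n+\Psi\in(H_1)$ in the sense of Definition \ref{cp} $i)$. By the equivalence cited there (\cite[Proposition 2.183]{CLM}), $0\in\partial\Phi_n(u_n)+\partial\Psi(u_n)=\partial\Phi(u_n)-w_n+\partial\Psi(u_n)$, which is exactly the required inclusion.

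The main obstacle is the justification of this cited equivalence, should one want it self-contained. It requires a convex-analysis argument. Set $f(z)=\Phi_n^\circ(u_n,z)$ and $g(z)=\Psi(u_n+z)-\Psi(u_n)$. Then $f+g\ge 0=(f+g)(0)$, so $0\in\partial(f+g)(0)$. Since $f$ is convex and continuous everywhere by Proposition \ref{21} $i)$, the Moreau--Rockafellar sum rule applies and gives $\partial(f+g)(0)=\partial f(0)+\partial g(0)$. Finally, $\partial f(0)=\partial\Phi_n(u_n)$ by \eqref{sd1}, and $\partial g(0)=\partial\Psi(u_n)$. Continuity of $f$ is what makes the sum rule legitimate even though $\Psi$ may take the value $+\infty$.
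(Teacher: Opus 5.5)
Your proof is correct and is essentially the paper's argument. The paper also rewrites the (PS) inequality as nonnegativity of the convex function $z\mapsto\Phi^{\circ}(u_n,z)+\Psi(u_n+z)-\Psi(u_n)-\langle w_n,z\rangle$, which vanishes at $0$, and then applies the subdifferential sum rule together with \eqref{sd1}; the converse direction is immediate from the definitions. The differences are cosmetic: you absorb the linear term into $\Phi_n=\Phi-w_n$ instead of keeping it as a separate summand, and you state the qualification condition (continuity of $\Phi^{\circ}(u_n,\cdot)$) that the paper leaves under ``standard properties of subdifferentials''.
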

	\begin{proof}
		Let $(u_n)$ be an arbitrary $(\rm PS)_c$ sequence of $I$. Thus, for some sequence $(w_n)$ in $X^*$, it holds
		$$ \Phi^{\circ}(u_n,v-u_n) + \Psi(v)-\Psi(u_n) \geq \langle w_n,v-u_n\rangle,\,\,\, \forall v \in X,$$
		with $w_n\rightarrow 0$ in $X^*$. Therefore, setting, for each $n\in \mathbb{N}$ and $w\in X$, $v=w+u_n$ and $G_n(\cdot)\coloneqq-\langle w_n,\cdot \rangle$, we have
		$$F_n(w)\coloneqq\Phi^{\circ}(u_n,w) + \Psi(w+u_n)-\Psi(u_n)+G_n(w) \geq 0,\,\,\,\forall w\in X.$$ 
		It can quickly seen that the map $F_n$ satisfies the following properties.\\ \\
		$i)$ $F_n$ is a convex function for every $n\in \mathbb{N}$.\\
		$ii)$ The point $w=0$ is a local minima of $F_n$.
		\\
		
		Then, by standard properties of subdifferentials of convex functions and using (\ref{sd1}), we derive
		$$\begin{aligned}
		0 &\in \partial ( \Phi^{\circ}(u_n,\cdot)+\Psi(\cdot+u_n)-\Psi(u_n)+G_n)(0)\\
		&\subset \partial\Phi^{\circ}(u_n,\cdot)(0)+\partial \Psi(u_n)+\{G_n\}\\
		&= \partial \Phi(u_n)+\partial \Psi(u_n)+\{-w_n\}.
		\end{aligned}$$
		From this it follows that
		$$w_n \in \partial \Phi(u_n)+\partial \Psi(u_n), \,\,\,\forall n\in \mathbb{N}.$$
		The reciprocal is a direct consequence of the definition of $\partial \Phi(u_n)$ and $\partial \Psi(u_n)$.
	\end{proof}

\section{Abstract theorems involving the $G$-index theory}\label{Sec3}

	This section is devoted to establish our main results related with the existence of multiples critical points for functionals $I\in (H_1)$ that verify a condition of symmetry. More precisely, these results deal with the notions of the $G$-index theory introduced in \cite{ABS}. It is very important to mention that the result to be proved in this section generalize and complement several minimax type theorems involving multiplicity of critical points such as, among others, \cite[Theorems 3.8 and 3.9]{ABS} and \cite[Corollary 3.4]{Motreanu}. As in the previous section, $(X,\|\cdot \|) $ and $(X^*,\|\cdot \|_*)$ stand for a Banach space and its topological dual space, respectively.
	
	\subsection{Tools of the $G$-index theory} In the sequel, consider $G$ a topological compact group, with neutral element $e \in G$, that acts isometrically on $X$, i.e., there is is a continuous function
	$$
	\begin{array}{ccccl}
	\phi&: &G\times X&\rightarrow &X\\
	&&(g,v)&\mapsto&\phi(g,v)=gv
	\end{array}
	$$
	verifying
	\begin{itemize}
		\item[$(G_1)$] $ev = v,\,\, \forall v \in X$;
		\item[$(G_2)$] $(gh)v=g(hv),\,\, \forall v \in X,\, \forall g,h \in G$;
		\item[$(G_3)$]  \it{For each $g \in G$,
			\begin{equation*}
			\begin{array}{cccl}
			\phi_g:&X&\rightarrow &\,X \\
			&v &\mapsto &\phi_g(v)=gv
			\end{array}
			\end{equation*}
			is a linear isometric map, that is $\phi_g$ is linear and $\|gv\|=\|v\|,\,\, \forall v \in X,\,\,\forall g \in G $.}
	\end{itemize}	
	
	A subset $A$ of $X$ is said to be \textit{$G$-invariant} if $gA=A$ for every $g \in G$, where $gA\coloneqq \{gx \, :\, x\in A\}$. When $A \subset X$ is a $G$-invariant set, a map $\gamma: A\to X$ is said to be equivariant if $\gamma(gu)=g\gamma(u)$ for any $g \in G$, $u\in A$. We shall need of the following notation
	$$\Gamma_G(A)\coloneqq\{\gamma \in C(A,X) \;\; :\;\; \gamma \ \text{is equivariant}\}.$$ 
	
	A remarkable result of the measure theory assures that, under suitable conditions, it can be constructed on a group $G$ a \textit{left-invariant measure} $\mu$, i.e, the measure $\mu$ verifies
	$$\int_G f(g^{-1}y) d\mu = \int_G f(y)d\mu,\,\, \forall g \in G,$$ 
	for any integrable function $f:G \longrightarrow \mathbb{R}$. If, in addition, the gorup $G$ is compact, the measure $\mu$ can be chosen such that
	$$\mu(G)=1.$$
	This measure $\mu$ is denominated the \textit{Haar's measure} of $G$ and the integral related with $\mu$ is the so called \textit{Haar's integral}. Further details and a theoretical discussion of this subject can be found in \cite{Nachbin}. It is very important to point out that the Haar's integral can be extended for $X$-valued measurable functions (see, e.g., \cite[Appendix B]{Ismael} for an explicit construction of such integral). It is well known that, for each $\beta \in C(Gu,X),$ where $Gu\coloneqq \{gu \, : \, g\in G\}$, the map $\eta:X\rightarrow X$ given by
	\begin{equation}\label{Haar3}
	\eta(u)\coloneqq \int_Gg\beta(g^{-1}u)d\mu,\,\, u\in X,
	\end{equation}
	belongs to $\Gamma_G(X)$. This fact will be explored later on.
	
	Now, let us recall the notions of $G$-index theory demanded in our work. Besides of the condition $(G_1)-(G_3)$ above, assume that following additional condition holds.
	\\ \\
	$(G_0)$ For some finite dimensional vector space $Y$, fixed $k \in \mathbb{N}$, the group $G$ acts diagonally on $Y^k$, that is
	$$ gv = g(v_1,...,v_k) = (gv_1,...,gv_k), $$
	for every $v=(v_1,...,v_k) \in Y^k$ and each $ g \in G$.
	Moreover, the action of  $G$ on $Y$ is such that for each equivariant map $\gamma:\partial U \rightarrow Y^{k-1},$ where $k\geq 2$ and $U$ is a bounded $G$-invariant open set of $Y^k$ with $0 \in U,$ there is $u \in \partial U$ such that $\gamma(u) = 0$.
	\\
	
	Denote by $\Sigma$ the class of subsets of $(X-\{0\})$ that are $G$-invariant and closed in $X$. Let $Y$ be the vector space fixed in the condition $(G_0)$ above.
	
	\begin{definition}[$G$-index]
		The $G$-index of $A \in \Sigma\setminus\{\varnothing\}$ is defined as
		$$
		\gamma_G(A)\coloneqq\min\big\{k\in \mathbb{N}\setminus\{0\} \; \; : \;\; \exists \phi: A \rightarrow Y^k\setminus\{0\},\,\phi \in \Gamma_G(A) \big\},
		$$
		if such integer exists. Otherwise, we set $\gamma_G(A)\coloneqq+\infty$. Finally, we also set $\gamma_G(\emptyset)\coloneqq0$.
	\end{definition}

	Note that when $G=\mathbb{Z}_2$ the $G$-index introduced above turns out the well known genus of symmetric subset of $(X-\{0\})$ (see e.g. \cite{Rabinowitz1} for related topics).

	In the next proposition, we list some properties of the $G$-index $\gamma_G$.
	\begin{proposition}\label{GP}
		For every $A,B \in \Sigma$ the following facts hold:
		\begin{itemize}
			\item[$i)$] If there exists $\phi:A \rightarrow B$, $\phi \in\Gamma_G(A)$, then $\gamma_G(A)\leq \gamma_G(B);$
			\item[$ii)$] $A \subset B$ implies that $\gamma_G(A)\leq\gamma_G(B);$
			\item[$iii)$] $\gamma_G(A\cup B)\leq \gamma_G(A)+\gamma_G(B);$
			\item[$iv)$] $\gamma_G(\overline{A\setminus B})\geq\gamma_G(A)-\gamma_G(B),$ provided $\gamma_G(B)<\infty;$
			\item[$v)$] If $A$ is a compact set, then $\gamma_G(A)<\infty$;
			\item[$vi)$] If $A$ is a compact set, then we have $$\gamma_G(N_\delta(A))=\gamma_G(A),$$ $\delta \approx 0^+,$ where $$N_\delta(A)\coloneqq\{x \in X:\, d(x,A)\leq \delta\}.$$
		\end{itemize}
	\end{proposition}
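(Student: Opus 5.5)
The proof goes item by item, following the classical argument for the Krasnoselskii genus. The tools are the definition of $\gamma_G$, the Haar averaging operator \eqref{Haar3}, which turns continuous maps into equivariant ones, and the Tietze--Dugundji extension theorem. Throughout we use $\gamma_G(A)\le k$ if and only if there is an equivariant continuous $\varphi:A\to Y^k\setminus\{0\}$.

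\textbf{Items $i)$ and $ii)$.} For $i)$, suppose $\gamma_G(B)=k<\infty$ and let $\psi:B\to Y^k\setminus\{0\}$ be equivariant. Then $\psi\circ\phi:A\to Y^k\setminus\{0\}$ is equivariant, so $\gamma_G(A)\le k$. Item $ii)$ follows from $i)$ applied to the inclusion map.

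\textbf{Item $iii)$.} Take equivariant $\varphi_1:A\to Y^{k}\setminus\{0\}$ and $\varphi_2:B\to Y^m\setminus\{0\}$, with $A,B$ closed.
\begin{itemize}
\item Extend each $\varphi_i$ to a continuous map $X\to Y^{k}$ (resp.\ $Y^m$) by Dugundji, since $Y$ is finite dimensional.
\item Make the extensions equivariant by averaging, $\tilde\varphi_i(u)=\int_G g\,\varphi_i(g^{-1}u)\,d\mu$. On $A$ (resp.\ $B$) this returns $\varphi_i$, because $G$-invariance of the sets gives $g\varphi_i(g^{-1}u)=\varphi_i(u)$ there.
\item Set $\Phi=(\tilde\varphi_1,\tilde\varphi_2):A\cup B\to Y^{k+m}$. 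It is equivariant under the diagonal action and never vanishes on $A\cup B$.
\end{itemize}
Hence $\gamma_G(A\cup B)\le k+m$.

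\textbf{Items $iv)$, $v)$ and $vi)$.} Item $iv)$ follows from $iii)$ and $ii)$, since $A\subset\overline{A\setminus B}\cup B$.

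For $v)$, let $A$ be compact with $0\notin A$. For each $u\in A$, pick a functional $f_u\in X^*$ with $f_u(u)\ne0$, and average to get an equivariant map into $Y$. The main obstacle lies here: for a general compact group, the averaged map can vanish at $u$, and producing a nonvanishing equivariant map on a neighborhood of the orbit $Gu$ needs more than averaging alone. My plan is to use the structural hypothesis $(G_0)$ on the action on $Y$, as in \cite{ABS}, to obtain for each orbit $Gu$ a $G$-invariant open neighborhood $U_u$ and an equivariant map $U_u\to Y^{k_u}\setminus\{0\}$. Compactness then gives a finite subcover, and combining the finitely many maps as in $iii)$ yields $\gamma_G(A)<\infty$. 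Concretely, I would extend a nonvanishing map defined on the orbit itself and then average.

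For $vi)$, $v)$ gives $k=\gamma_G(A)<\infty$ and an equivariant $\varphi:A\to Y^k\setminus\{0\}$. Extend it equivariantly to $\tilde\varphi$ on $X$ as in $iii)$. By continuity and compactness of $A$, $\tilde\varphi\ne0$ on $N_\delta(A)$ for small $\delta$; this step needs a uniform positive lower bound for $|\varphi|$ on $A$, together with uniform continuity near the compact set $A$. Note that $N_\delta(A)$ is closed and $G$-invariant because the action is isometric, and it avoids $0$ for small $\delta$. Therefore $\gamma_G(N_\delta(A))\le k$, and the reverse inequality comes from $ii)$.
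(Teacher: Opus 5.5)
Your items $i)$–$iv)$ and $vi)$ are fine; the paper itself only cites \cite[Proposition 2.15]{ABS}. Item $v)$, however, has a genuine gap, and your plan to close it with $(G_0)$ cannot work. Hypothesis $(G_0)$ is a Borsuk--Ulam type statement: it says certain equivariant maps into $Y^{k-1}$ \emph{must} vanish. It is an obstruction, used to bound the index from below. It gives no way to build nonvanishing equivariant maps on orbits in $X$. Likewise, ``extend a nonvanishing map defined on the orbit itself'' assumes exactly what has to be shown.

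An equivariant map $\varphi:Gu\to Y^k\setminus\{0\}$ exists if and only if $Y$ contains a nonzero vector fixed by the isotropy group $G_u=\{g\in G: gu=u\}$. This is because $\varphi(u)=\varphi(gu)=g\varphi(u)$ for every $g\in G_u$. Nothing in $(G_0)$--$(G_3)$ guarantees such a vector. For example, let $G=\mathbb{Z}_2$ act trivially on $X$ and by $y\mapsto -y$ on $Y=\mathbb{R}$. Then $(G_1)$--$(G_3)$ hold and $(G_0)$ is the classical Borsuk--Ulam theorem. Yet for $u\neq 0$ the compact set $\{u\}\in\Sigma$ forces $\varphi(u)=-\varphi(u)$ for every equivariant $\varphi$, so $\gamma_G(\{u\})=\infty$. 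Hence $v)$ cannot be derived from the stated hypotheses at all.

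The missing ingredient is an extra assumption: for each $u\in X\setminus\{0\}$ there is $y_u\in Y\setminus\{0\}$ with $hy_u=y_u$ for all $h\in G_u$. This holds, for instance, when $G$ acts freely on $X\setminus\{0\}$. In particular it holds for $\mathbb{Z}_2$ acting by $u\mapsto -u$ with $Y=\mathbb{R}$, the only case used in Section~\ref{Sec4}.

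With this assumption, $gu\mapsto gy_u$ is a well-defined, continuous (since $Gu\cong G/G_u$), equivariant, nonvanishing map on the orbit, and your scheme then goes through:
\begin{itemize}
\item extend it to $X$ by Tietze and average with the Haar measure, which leaves it unchanged on $Gu$;
\item by compactness of $Gu$, it is nonvanishing on some $N_{\delta_u}(Gu)$, and this set is $G$-invariant because the action is isometric;
\item cover $A$ by finitely many of the open sets $\{d(\cdot,Gu)<\delta_u\}$;
\item apply $iii)$ to the closed invariant pieces $A\cap N_{\delta_u}(Gu)$, each of index at most $1$.
\end{itemize}

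One minor point: $vi)$ does not actually need $v)$. If $\gamma_G(A)=\infty$, then $\gamma_G(N_\delta(A))=\infty$ by $ii)$ as soon as $\delta<d(0,A)$.
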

	\begin{proof}
		\cite[Proposition 2.15]{ABS}.
	\end{proof}

	\subsection{Minimax theorems related with the $G$-index theory} We are going to prove some minimax type theorems for functionals $I\in (H_1)$ related with the $G$-index theory. Preceding such theoretical results, let us recall an important deformation lemma valid for functionals $I\in (H_1)$. By a deformation, we understand a mapping of the form
	\begin{equation*}
	\alpha_s\coloneqq \alpha(s,\cdot):W\subset X \rightarrow X, \,\,s \in [0, s_0]
	\end{equation*}
	where $\alpha_0 \equiv \mathcal{I}d|_W$, with $\alpha \in C([0, s_0]\times W,X)$ and $\mathcal{I}d|_W$ denotes the restriction of the identity map $\mathcal{I}d$ on $X$ to $W$. From now on, we fix for a functional $I\in(H_1)$
	$$I^{c}\coloneqq I^{-1}((-\infty, c])\, \mbox{ for every }\, c \in \mathbb{R},$$
	$$K\coloneqq \{u \in X: \, u \,\, \text{is a critical point of}\,\, I\},$$
	and
	$$K_c\coloneqq  \{u \in K:\, I(u)=c\}.$$
	Now, we recall an important deformation lemma valid for functionals of the class $(H_1)$ due to Motreanu-Panagiatopoulos \cite[Theorem 3.1]{Motreanu}.
	\begin{lemma}\label{Def}
		Suppose that $I=\Phi + \Psi\in (H_1)$ satisfies the $(\rm PS)_c$ condition for some $c\in\mathbb{R}$ and let $N$ be a neighbourhood of $K_c$. Then, fixed $\varepsilon_0 > 0$, there exists $\varepsilon \in (0,\varepsilon_0)$ such that, for each compact set $A \subset X\setminus N$ with
		\begin{equation}\label{Compact}
		c \leq \displaystyle{\sup_{u \in A}} I(u) \leq c+\varepsilon,
		\end{equation}
		there exist a closed set $W,$ with $A\subset \text{int}({W}),$ and a deformation $\alpha_s: W \rightarrow X,$ with $0 \leq s \leq s_0 \approx 0^+,$ such that
		\begin{itemize}
			\item[$i)$] $ \|\alpha_s(u) - u \| \leq s,\,\,\, \forall u \in W;$
			\item[$ii)$] $I(\alpha_s(u))-I(u) \leq Ms,\,\,\, \forall u \in W,\,s\in[0,s_0]$, for some positive $M>0$ independent of $u$ and $s>0$;
			\item[$iii)$] $I(\alpha_s(u)) - I(u) \leq -2\varepsilon s, \,\,\, \forall u \in W,\,\, I(u)\geq c-\varepsilon;$
			\item[$iv)$] $\displaystyle{\sup_{u \in A}}I(\alpha_s(u)) - \displaystyle{\sup_{u \in A}} I(u)\leq -2\varepsilon s,\, \forall s\in[0,s_0]$;
		\end{itemize}
	\end{lemma}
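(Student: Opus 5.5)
Lemma \ref{Def} is the Motreanu--Panagiotopoulos deformation lemma, so the plan is to follow the scheme of \cite[Theorem 3.1]{Motreanu}. The argument adapts the Szulkin-type deformation, with the Fréchet derivative replaced by the generalized directional derivative.

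\textbf{Step 1: a uniform slope estimate away from $N$.} I first claim, by contradiction with the $(\rm PS)_c$ condition, that there is $\varepsilon\in(0,\varepsilon_0)$ with the following property. For every $u\in X\setminus N$ with $|I(u)-c|\le 2\varepsilon$ there is $v_u\in X$, $\|v_u-u\|\le 1$ (in fact one may take $v_u\neq u$), such that
$$\Phi^\circ(u,v_u-u)+\Psi(v_u)-\Psi(u) < -4\varepsilon\|v_u-u\|.$$
Suppose this fails for $\varepsilon=1/n$. Then we obtain $u_n\notin N$ with $I(u_n)\to c$ and
$$\Phi^\circ(u_n,v-u_n)+\Psi(v)-\Psi(u_n)\ge -\tfrac4n\|v-u_n\|$$
for all $v$ near $u_n$. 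Convexity of $\Phi^\circ(u_n,\cdot)$ and of $\Psi$ extends this inequality to all $v\in X$. Hence $(u_n)$ is a $(\rm PS)_c$ sequence, and by $(\rm PS)_c$ a subsequence converges to some $u$. Passing to the limit uses the upper semicontinuity in Proposition \ref{21} ii) and the lower semicontinuity of $\Psi$. One must also check that $\Psi(u_n)\to\Psi(u)$; this follows from $I(u_n)\to c$ together with continuity of $\Phi$ and lower semicontinuity of $\Psi$, after testing with $v=u$. The limit gives $u\in K_c$. Since $N$ is open and $u_n\notin N$, we have $u\notin N$, a contradiction.

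\textbf{Step 2: local construction on the compact set $A$.} For each $u\in A$, consider the points of $A$ lying in the relevant level band. By Step 1 and the upper semicontinuity of $\Phi^\circ$, the same $v_u$ works, with constant $3\varepsilon$, for all $w$ in a small ball $B(u,r_u)$. This uses the lower semicontinuity of $\Psi$ at $u$, and also the fact that $\Psi(w)$ stays close to $\Psi(u)$ for $w\in A$. Here compactness of $A$ and the condition $\sup_A I\le c+\varepsilon$ are what make finitely many balls suffice. I then take a finite subcover, a locally Lipschitz partition of unity $\{\rho_i\}$ subordinate to it, and set
$$\alpha_s(w)=w+s\sum_i\rho_i(w)\frac{v_i-w}{\|v_i-w\|},$$
defined on a closed neighbourhood $W$ of $A$ contained in the union of the balls. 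This immediately gives i): $\|\alpha_s(w)-w\|\le s$.

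\textbf{Step 3: the energy estimates.} To estimate $\Phi(\alpha_s(w))-\Phi(w)$ I use the mean value theorem for generalized gradients. For $\Psi(\alpha_s(w))-\Psi(w)$ I use convexity: $\alpha_s(w)$ is a convex combination of $w$ and the points $w+t_i(v_i-w)$. This yields iii) and iv) with slope $-2\varepsilon$ for small $s$. Item ii) follows from the local Lipschitz constant of $\Phi$ on the bounded set $W$, together with the same convexity bound for $\Psi$ (which is $\le 0$ on the good directions and bounded in general). This gives a uniform $M$.

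\textbf{Main obstacle.} I expect the delicate step to be the uniformity in Step 2 coming from the $\Psi$ term. Since $\Psi$ is only lower semicontinuous and possibly $+\infty$, the inequality $\Psi(v_i)-\Psi(w)<\dots$ must be controlled for $w$ near $u$, and lower semicontinuity goes the wrong way for this. My first attempt is to restrict to $w\in A$ and use $I(w)\ge c-\varepsilon$ together with $\sup_A I\le c+\varepsilon$ to pin $\Psi(w)$ near $\Psi(u)$, exactly as Szulkin does. If that is not enough, I would instead let the deformation move $w$ toward $v_i$ only along segments, and use convexity of $\Psi$ to bound $\Psi(w+t(v_i-w))\le(1-t)\Psi(w)+t\Psi(v_i)$.
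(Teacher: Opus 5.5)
Your scheme is the Szulkin-type argument carried out in \cite[Theorem 3.1]{Motreanu}, which is all the paper invokes: a uniform slope from $(\mathrm{PS})_c$, a finite cover of $A$, a locally Lipschitz partition of unity, $\alpha_s(w)=w+s\sum_i\rho_i(w)\frac{v_i-w}{\|v_i-w\|}$, Lebourg's mean value theorem for $\Phi$, and convexity for $\Psi$. Step 1 is fine.

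The gap is in Step 2. You attach a direction $v_u$ only to points of $A$ in the band $|I(u)-c|\le 2\varepsilon$, and say nothing about $u\in A$ with $I(u)<c-2\varepsilon$. These points must also be covered, otherwise $A\not\subset\mathrm{int}(W)$. Since $I$ is only lower semicontinuous, every neighbourhood of such a $u$ may contain points $w$, of $A$ or of $W\setminus A$, with $I(w)\ge c-\varepsilon$, and (iii) is required at those points. This does occur for the total variation functional of Section~\ref{Sec4}, which is lower semicontinuous but not continuous on $L^q(\Omega)$. For the same reason the set $\{u\in A:\,I(u)\ge c-2\varepsilon\}$ need not be closed. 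So compactness of $A$ does not give finitely many of your balls $B(u,r_u)$ covering it: the radii can shrink along band points accumulating at a low point. Giving low points zero velocity does not rescue this, since it dilutes the descent at nearby band points, and then (iii), hence (iv), fails.

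The missing idea is a ``jump'' direction at low points. Suppose $I(u)<c-2\varepsilon$ and $|\Phi(w)-\Phi(u)|<\varepsilon/2$ on $B(u,r)$. Then $I(w)\ge c-\varepsilon$ forces $\Psi(w)>\Psi(u)+\varepsilon/2$. Take $v_u=u+\theta(p-u)$ with $p\in D(\Psi)\setminus\{u\}$ and $\theta$ small. Convexity gives $\Psi(v_u)\le\Psi(u)+\theta(\Psi(p)-\Psi(u))$. Hence $\Phi^\circ(z,v_u-w)+\Psi(v_u)-\Psi(w)<-3\varepsilon\|v_u-w\|$ for all $z,w\in B(u,r')$ with $I(w)\ge c-\varepsilon$, once $r'\ll\theta\|p-u\|$. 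For the remaining $w$ in that ball the rate is merely bounded, and this is what feeds the constant $M$ in (ii).

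Your stated ``main obstacle'' is misdiagnosed. Lower semicontinuity of $\Psi$ at $u$ gives $-\Psi(w)<-\Psi(u)+\delta$ near $u$. That is exactly the direction needed to propagate the strict inequality of Step 1 to \emph{every} $w\in B(u,r_u)$, including those with $\Psi(w)$ large or infinite. The claim that $\Psi(w)$ stays close to $\Psi(u)$ for $w\in A$ is false in general and unnecessary. Restricting to $w\in A$ would also not give the estimates on a neighbourhood $W$ that the lemma asserts.

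There are smaller points in Step 3:
\begin{itemize}
\item Lebourg's theorem evaluates $\Phi^\circ$ at an intermediate point $z\in[w,\alpha_s(w)]$. So the propagation must use joint upper semicontinuity of $(z,d)\mapsto\Phi^\circ(z,d)$, with $s_0$ smaller than the radii and smaller than $\inf\|v_i-w\|$ on $\mathrm{supp}\,\rho_i$, so that $s/\|v_i-w\|\le 1$.
\item The bound in (ii) should use the Lipschitz constants on the finitely many balls, not a Lipschitz constant on the ``bounded set $W$''.
\item (iv) then follows from (ii) and (iii) by taking $s_0\le\varepsilon/(M+2\varepsilon)$.
\end{itemize}
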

	\begin{proof} 
		The proof is well made in \cite[Theorem 3.1]{Motreanu}.
	\end{proof}

	Borrowing the ideas in \cite{ABS}, we introduce the following concept.
	\begin{definition}
		Let $\Psi:X\rightarrow (-\infty, +\infty]$ be an arbitrary functional on $X$ and let $G$ be a compact topological group that acts on $X$. We say that $\Psi$ is compatible with the action of $G$ on $X$ $($briefly $G$-compatible$)$ if the following inequality holds
		\begin{equation}\label{Comp}
		\Psi\left(\int_G g^{-1}\beta(g u) d\mu\right) \leq \int_G\Psi(g^{-1}\beta(g u))d\mu,
		\end{equation}
		for every fixed $u\in X$, $\beta \in C(Gu,X),$ where $Gu\coloneqq \{gu \, : \, g\in G\}$ and $\mu$ denotes the normalized Haar measure on $G$.
	\end{definition}	
	The technical property pointed out below plays an important role in the proof of the main abstract results of this section.

	\begin{lemma}
	Let $\Phi \in {\rm Lip_{loc}}(X,\mathbb{R})$. Then $\Phi^{\circ}(u,\cdot)$ is a $G$-compatible function for each fixed $u\in X$, i.e., fixed $u\in X$, the function $\Phi^{\circ}(u,\cdot)$ verifies (\ref{Comp}).
	\end{lemma}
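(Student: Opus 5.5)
Fix $u\in X$, $\beta\in C(Gu,X)$, and write $h:=\Phi^{\circ}(u,\cdot)$. The plan is to treat this as a Jensen inequality for the Haar integral of the continuous $X$-valued map $g\mapsto g^{-1}\beta(gu)$ on the compact group $G$. By item $i)$ of Proposition \ref{21}, $h$ is convex and continuous, and it is finite everywhere. In fact $h(v)\le L\|v\|$, where $L$ is a local Lipschitz constant of $\Phi$ near $u$, so $h$ is Lipschitz on $X$ (sublinear and bounded by $L\|\cdot\|$). This rules out any integrability issue: the map $F(g):=g^{-1}\beta(gu)$ is continuous on the compact set $G$, because the action is continuous, inversion is continuous and $\beta$ is continuous. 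Hence $F(G)$ is compact and $h\circ F$ is a continuous real function on $G$, so both sides of (\ref{Comp}) make sense.

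\textbf{Main route: supporting functionals.} By (\ref{sd1}) and the Hahn--Banach theorem, $h(v)=\max\{\langle\xi,v\rangle:\ \xi\in\partial\Phi(u)\}$ for every $v\in X$. Set $\bar v:=\int_G F(g)\,d\mu$ and choose $\xi\in\partial\Phi(u)$ with $h(\bar v)=\langle\xi,\bar v\rangle$. Since the Bochner/Haar integral commutes with continuous linear functionals, we get
$$h(\bar v)=\Big\langle \xi,\int_G F\,d\mu\Big\rangle=\int_G\langle\xi,F(g)\rangle\,d\mu\le\int_G h(F(g))\,d\mu,$$
which is exactly (\ref{Comp}).

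\textbf{Expected obstacle.} The only delicate point is the identity $\langle\xi,\int_G F\,d\mu\rangle=\int_G\langle\xi,F\rangle\,d\mu$ for the vector-valued Haar integral. I would justify it from the construction of the integral in \cite[Appendix B]{Ismael}, or more directly: a continuous $F$ on compact $G$ is a uniform limit of simple functions $\sum_i F(g_i)\mathds{1}_{E_i}$ over finite measurable partitions $\{E_i\}$ of $G$. For these simple functions the identity is trivial, and it passes to the limit by the continuity of $\xi$.

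\textbf{Fallback.} Alternatively, the same approximation gives the result without the support-function step. For simple functions, convexity of $h$ and $\sum_i\mu(E_i)=1$ give $h(\sum_i\mu(E_i)F(g_i))\le\sum_i\mu(E_i)h(F(g_i))$. One then passes to the limit using the Lipschitz continuity of $h$ on the left and the uniform continuity of $h\circ F$ on compact $G$ on the right.
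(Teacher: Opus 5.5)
Your argument is correct and is essentially the paper's proof. The paper simply says the lemma follows from the convexity and continuity of $\Phi^{\circ}(u,\cdot)$ given by Proposition~\ref{21}-$i)$, i.e.\ from Jensen's inequality for the normalized Haar integral; your supporting-functional argument, or the simple-function fallback, fills in the details of that Jensen inequality, together with the bound $\Phi^{\circ}(u,v)\le L\|v\|$ that makes both sides of (\ref{Comp}) finite and well defined.

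One notational point: in (\ref{Comp}) the orbit point $w$, with $\beta\in C(Gw,X)$, must be arbitrary and independent of the base point $u$ of $\Phi^{\circ}(u,\cdot)$. Your proof never uses that the two coincide, so it goes through verbatim for arbitrary $w$.
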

	\begin{proof}
		The result is a natural consequence of Proposition \ref{21}-$i)$.
	\end{proof}

	The lemma below is an equivariant version of Lemma \ref{Def} and generalizes the results in \cite{Szulkin} and \cite{ABS}.
	
	\begin{lemma}\label{EDL}
		Suppose that $I=\Phi + \Psi\in (H_1)$ satisfies the $(\rm PS)_c$ condition. Assume that $\Phi$ and $\Psi$ are $G$-invariant functionals and $\Psi$ is a $G$-compatible functional. Moreover, suppose that $G$ acts isometrically on $X$. Under the hypothesis of Lemma \ref{Def}, the same conclusions hold with $\alpha_s: W \rightarrow X$  an equivariant mapping for each fixed $s\in [0,s_0]$, whenever $A$ in (\ref{Compact}) is assumed a $G$-invariant set.
	\end{lemma}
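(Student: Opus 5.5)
The plan is to take the non-equivariant deformation $\alpha_s$ given by Lemma \ref{Def} and average it with the Haar integral, as in (\ref{Haar3}):
$$\tilde\alpha_s(u)\coloneqq\int_G g^{-1}\alpha_s(gu)\,d\mu .$$

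First, $W$ can be replaced by a $G$-invariant closed set. Since $A$ is $G$-invariant and compact, we may shrink $W$ to a closed invariant set. Concretely, we take $W'=\{u: d(u,A)\le\rho\}$ with $\rho$ small so that $W'\subset W$. This set is invariant because the action is isometric and $gA=A$, and $A\subset\operatorname{int}W'$.

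Next we check the routine properties of $\tilde\alpha_s$.
\begin{itemize}
\item[$\bullet$] \emph{Continuity and equivariance.} Continuity in $(s,u)$ follows from continuity of $\alpha$ and compactness of $G$. Equivariance follows from left invariance of $\mu$, by the standard computation behind (\ref{Haar3}).
\item[$\bullet$] \emph{Initial value.} $\tilde\alpha_0=\mathcal{I}d$, since $g^{-1}gu=u$.
\item[$\bullet$] \emph{Property $i)$.} We have
$$\|\tilde\alpha_s(u)-u\|=\Big\|\int_G g^{-1}(\alpha_s(gu)-gu)\,d\mu\Big\|\le\int_G\|\alpha_s(gu)-gu\|\,d\mu\le s,$$
using $(G_3)$ and the fact that $gu\in W'$.
\end{itemize}

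The energy estimates $ii)$–$iv)$ are the main point. We cannot simply use the convexity of $I$, because only $\Psi$ is convex.
\begin{itemize}
\item[$\bullet$] \emph{The $\Psi$ part.} $G$-compatibility (\ref{Comp}) with $\beta=\alpha_s|_{Gu}$, together with $G$-invariance of $\Psi$, gives
$$\Psi(\tilde\alpha_s(u))\le\int_G\Psi(\alpha_s(gu))\,d\mu .$$
\item[$\bullet$] \emph{The $\Phi$ part.} I would use Lebourg's mean value theorem, or more simply the bound $\Phi(x)-\Phi(y)\le \Phi^\circ(z,x-y)$ for some $z\in[y,x]$. A cleaner route is to redo the construction inside Motreanu–Panagiotopoulos's proof. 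There, $\alpha_s(u)=u+s\,v(u)$ (roughly) with a locally Lipschitz pseudo-gradient-type field $v$, chosen through a partition of unity, such that
$$\Phi^\circ(u,v(u))+\frac{\Psi(u+s_0v(u))-\Psi(u)}{s_0}<-3\varepsilon .$$
I would replace $v$ by its average $\tilde v(u)=\int_G g^{-1}v(gu)\,d\mu$. Then:
  \begin{itemize}
  \item[$\circ$] Sublinearity and convexity of $\Phi^\circ(u,\cdot)$ (the preceding Lemma) combined with $G$-invariance of $\Phi$ give $\Phi^\circ(gu,gw)=\Phi^\circ(u,w)$. This yields $\Phi^\circ(u,\tilde v(u))\le\int_G\Phi^\circ(gu,v(gu))\,d\mu$.
  \item[$\circ$] Compatibility of $\Psi$, applied to $\beta(gu)=gu+s_0v(gu)$, gives the analogous inequality for the $\Psi$-term.
  \end{itemize}
Hence $\tilde v$ keeps the same descent inequality with the same constants. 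The deformation is then built from $\tilde v$ exactly as in \cite[Theorem 3.1]{Motreanu}, and $ii)$–$iv)$ follow verbatim. Equivariance of the resulting $\alpha_s(u)=u+s\tilde v(u)$ follows from equivariance of $\tilde v$.
\end{itemize}

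The main obstacle I expect is this descent estimate. It requires checking that the descent inequality, which only holds locally with uniform constants on a neighbourhood of the compact set $A$, survives averaging. This needs $Gu\subset W'$ and uniformity of the constants $\varepsilon$, $M$ and $s_0$ along orbits. Both come from compactness of $A$ and invariance of all the data. Property $iv)$ then follows from $iii)$, since $\sup_A I\ge c$ and the points with $I(u)<c-\varepsilon$ stay below the supremum minus $2\varepsilon s$ for small $s$, by $ii)$.
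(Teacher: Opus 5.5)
Your proposal is correct and follows essentially the paper's route: Haar-average the Motreanu--Panagiotopoulos deformation (equivalently, its descent field), $\tilde\alpha_s(u)=\int_G g^{-1}\alpha_s(gu)\,d\mu$, and re-run their estimates using the sublinearity of $\Phi^\circ(u,\cdot)$, the identity $\Phi^\circ(gu,gw)=\Phi^\circ(u,w)$ (the paper's preceding lemma) and the $G$-compatibility of $\Psi$; your invariant $W'$ and the explicit check of $i)$ fill in details that the paper leaves to ``a careful reading'' of Motreanu--Panagiotopoulos's Theorem 3.1. One small precision: equivariance of $u\mapsto\int_G g^{-1}\alpha_s(gu)\,d\mu$ needs right (equivalently, inversion) invariance of $\mu$, not left invariance alone, and this holds because compact groups are unimodular.
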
	

	\begin{proof}
		Let $\alpha_s$ be the map given in Lemma \ref{Def} and set
		\begin{equation*}
		\widetilde{\alpha}_s(u)\coloneqq \int_Gg^{-1}\alpha_s(gu)d\mu.
		\end{equation*}
		A carefully reading of the proof of Theorem 3.1 in \cite{Motreanu} allows us to derive that, in view of the preceding lemma, the same estimates obtained there still holds for $\widetilde{\alpha}_s$ as given above (see also \cite[Remark 3.2]{Motreanu} for a related comment).
	\end{proof}

	We recall below the famous variational Ekeland's principle \cite[Theorem 1]{Ekeland} that will be an useful fact in our study.
	
	\begin{theorem}[Variational Ekeland's principle]\label{PVE}
		Let $(Y,d)$ be a complete metric space. Suppose that $\varphi:Y\rightarrow(-\infty,\infty]$ is a proper lower semicontinuous functional bounded from below. Given $\delta$, $\tau>0$ and $u_0 \in Y$ such that
		\begin{equation}
		\displaystyle{\inf_{u\in Y}}\varphi(u) \leq \varphi(u_0)\leq\displaystyle{\inf_{u\in Y}}\varphi(u)+\delta,
		\end{equation}
		then, there exists $v_0 \in Y$ verifying
		\begin{itemize}
			\item [$i)$]$\varphi(v_0)\leq\varphi(u_0),\,\,\,d(v_0,u_0)\leq {1}/{\tau};$
			\item [$ii)$]$\varphi(v)-\varphi(v_0)\geq-\delta\tau d(v,v_0),\, \forall v \in Y$.
		\end{itemize}
	\end{theorem}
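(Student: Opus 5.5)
The statement is Ekeland's classical principle \cite[Theorem 1]{Ekeland}. I would prove it directly by a nested-sets construction in $Y$, using only completeness of $(Y,d)$, lower semicontinuity of $\varphi$ and boundedness from below. Write $\kappa\coloneqq\delta\tau>0$. For $w\in D(\varphi)$ define
$$S(w)\coloneqq\{v\in Y:\ \varphi(v)+\kappa\, d(v,w)\leq \varphi(w)\}.$$
Each $S(w)$ has three basic properties.
\begin{enumerate}
\item[a)] It contains $w$.
\item[b)] It is closed, because $v\mapsto\varphi(v)+\kappa d(v,w)$ is lower semicontinuous.
\item[c)] It is transitive: if $v\in S(w)$, then $S(v)\subset S(w)$. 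Indeed, for $z\in S(v)$ the triangle inequality gives $\varphi(z)+\kappa d(z,w)\leq \varphi(z)+\kappa d(z,v)+\kappa d(v,w)\leq\varphi(v)+\kappa d(v,w)\leq\varphi(w)$.
\end{enumerate}

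Next I build a sequence inductively. Start from $u_0$ and set $S_n\coloneqq S(u_n)$. Given $u_n$, choose $u_{n+1}\in S_n$ with
$$\varphi(u_{n+1})\leq \inf_{S_n}\varphi+2^{-n}.$$
This infimum is finite because $\varphi$ is bounded below. By transitivity, $S_{n+1}\subset S_n$. The key estimate is that the diameters shrink. For $z\in S_{n+1}$ we also have $z\in S_n$, so
$$\kappa\, d(z,u_{n+1})\leq \varphi(u_{n+1})-\varphi(z)\leq \varphi(u_{n+1})-\inf_{S_n}\varphi\leq 2^{-n}.$$
Hence $\operatorname{diam}S_{n+1}\leq 2^{1-n}/\kappa\to0$. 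Since the sets $S_n$ are nonempty, closed and nested with diameters tending to zero, Cantor's intersection theorem in the complete space $Y$ gives $\bigcap_n S_n=\{v_0\}$ for a single point $v_0$.

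Finally I verify the two conclusions.
\begin{enumerate}
\item[$i)$] Since $v_0\in S_0=S(u_0)$, we get $\varphi(v_0)+\kappa d(v_0,u_0)\leq\varphi(u_0)$. In particular $\varphi(v_0)\leq\varphi(u_0)$. Moreover,
$$\kappa\, d(v_0,u_0)\leq \varphi(u_0)-\inf_Y\varphi\leq\delta,$$
so $d(v_0,u_0)\leq 1/\tau$.
\item[$ii)$] Suppose, for contradiction, that some $v$ satisfies $\varphi(v)<\varphi(v_0)-\kappa d(v,v_0)$. Then $v\neq v_0$ and $v\in S(v_0)$. Because $v_0\in S_n$ for every $n$, transitivity gives $S(v_0)\subset S_n$ for all $n$. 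Hence $v\in\bigcap_n S_n=\{v_0\}$, which is a contradiction. Therefore $\varphi(v)-\varphi(v_0)\geq-\delta\tau\, d(v,v_0)$ for all $v\in Y$.
\end{enumerate}

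The only delicate step is the diameter estimate. It relies on choosing $u_{n+1}$ as a near-minimizer of $\varphi$ over $S_n$ rather than over all of $Y$, together with the nesting $S_{n+1}\subset S_n$. Once that estimate is in place, the rest is routine.
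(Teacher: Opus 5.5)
Your proof is correct and complete: the sets $S(w)$ are closed and nested by transitivity, their diameters are at most $2^{1-n}/(\delta\tau)$, and the choice $\kappa=\delta\tau$ gives exactly $d(v_0,u_0)\le 1/\tau$ and the slope $-\delta\tau$ in $ii)$. The paper gives no argument of its own and simply invokes \cite[Theorem 1]{Ekeland}; your nested-sets construction is essentially the standard proof of that cited result.
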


	In the sequel we prove some minimax type results for functionals $I \in (H_1)$.
	Our first result generalizes \cite[Theorem 4.4]{Szulkin} and complement \cite[Corollary 3.6]{Motreanu}. 
	
	\begin{theorem}\label{SP1}
		Let $I=\Phi + \Psi\in (H_1)$ be such that $I(0)=0$. Assume that $\Phi$ and $\Psi$ are $G$-invariant functionals with $\Psi$ a $G$-compatible functional. Suppose that there exist subspaces $Y,Z$ of $X$ such that $X=Y\oplus Z,$ $\dim Y<\infty,$ $Z$ is closed and
		\begin{itemize}
			\item[$i)$] There are numbers $r,\tau>0$ such that $ I|_{\partial B_r(0)\cap Z} \geq \tau;$
			\item[$ii)$] There exists $V$ a finite dimensional subspace of $X$ such that $I(u) \rightarrow -\infty$ as $\|u\|\rightarrow \infty$ with $u \in V$ and ${\rm dim}\, V> {\rm dim}\, Y$.
		\end{itemize}	
		Then, setting $k\coloneqq ({\rm dim}\, V-{\rm dim}\, Y)$, it is possible to define levels of the form
		$$ c_j\coloneqq \inf_{A \in \Lambda_j}\sup_{u \in A}I(u),\quad j\in\{1,...,k\}$$
		in a such way that, if $I$ satisfies the $(\rm PS)_{c_j}$ condition, then $I$ has at least $k$ critical points $\{u_1,...,u_k\}$ with $I(u_j)=c_j$.
	\end{theorem}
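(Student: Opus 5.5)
We follow the classical scheme of Rabinowitz's symmetric mountain pass theorem (in the form of \cite[Theorem 4.4]{Szulkin} and \cite[Theorem 3.9]{ABS}), using the $G$-index $\gamma_G$ in place of the genus and the equivariant deformation Lemma \ref{EDL} in place of the smooth deformation lemma. First we fix the minimax classes. By $ii)$ there is $R>r$ such that $I<0$ on $V\setminus B_R(0)$. Set $D\coloneqq \overline{B_R(0)}\cap V$; since $I$ is $G$-invariant we may assume $V$ is $G$-invariant, replacing $V$ if necessary by an invariant finite dimensional subspace as in \cite{ABS}. Define
$$\Gamma\coloneqq\{h\in C(D,X):\ h \text{ equivariant},\ h=\mathcal{I}d \text{ on } \partial B_R(0)\cap V\},$$
and, for $j\in\{1,\dots,k\}$,
$$\Lambda_j\coloneqq\{h(\overline{D\setminus E}):\ h\in\Gamma,\ E\in\Sigma,\ \gamma_G(E)\le {\rm dim}\,V-{\rm dim}\,Y-j\}.$$
We then check the standard structural facts. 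We have $\Lambda_{j+1}\subset\Lambda_j$, so $c_j\le c_{j+1}$. Moreover, if $A\in\Lambda_j$ and $B\in\Sigma$ is compact with $\gamma_G(B)\le s$ and $j+s\le k$, then $\overline{A\setminus B}\in\Lambda_{j+s}$, by Proposition \ref{GP} $iii)$. Finally, for every $h\in\Gamma$ and admissible $E$, the set $h(\overline{D\setminus E})\cap\partial B_r(0)\cap Z$ is nonempty. This is the intersection lemma; it uses the Borsuk-type property $(G_0)$ through the map $u\mapsto(P_Y h(u), \|h(u)\|-r)$ on $\{u\in D: \|h(u)\|<r\}$, together with Proposition \ref{GP} $i),iv)$. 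Combined with $i)$, it gives $c_j\ge\tau>0$ for all $j$, while $c_j<\infty$ because $D\in\Lambda_1$ is compact.

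Next we show that each $c_j$ is a critical value and that multiplicity holds. Since $c_j>0=I(0)$, every $K_{c_j}$ avoids $0$. By the $({\rm PS})_{c_j}$ condition, $K_{c_j}$ is compact, and it is $G$-invariant because $\Phi,\Psi$ are invariant (the generalized derivative commutes with the isometric action). Assume $c_j=\dots=c_{j+p}=c$ with $\gamma_G(K_c)\le p$; we aim for a contradiction. By Proposition \ref{GP} $vi)$ choose $\delta>0$ with $\gamma_G(N_\delta(K_c))=\gamma_G(K_c)\le p$, and let $N$ be the interior of $N_\delta(K_c)$. Pick $A\in\Lambda_{j+p}$ with $\sup_A I\le c+\varepsilon$, where $\varepsilon$ is given by Lemma \ref{EDL}. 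Then $A'\coloneqq\overline{A\setminus N_\delta(K_c)}\in\Lambda_j$ is compact, $G$-invariant and disjoint from $N$. Applying Lemma \ref{EDL} yields an equivariant $\alpha_s$ with $\sup_{A'}I(\alpha_s)\le \sup_{A'}I-2\varepsilon s<c$.

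To finish the contradiction we must verify that $\alpha_s(A')\in\Lambda_j$, i.e.\ that $\alpha_s\circ h\in\Gamma$. Equivariance holds by construction. The boundary condition follows because on $\partial B_R(0)\cap V$ we have $I<0<c-\varepsilon$; we use the freedom in the construction of \cite{Motreanu} to keep the deformation equal to the identity away from $I^{-1}([c-\varepsilon_0,c+\varepsilon_0])$, possibly after a cutoff that preserves equivariance through the Haar average (\ref{Haar3}). This contradiction gives $\gamma_G(K_c)\ge p+1$. If all the $c_j$ are distinct we obtain $k$ critical points directly. If some coincide, $\gamma_G(K_c)\ge 2$ forces $K_c$ to be infinite, since a finite invariant set of orbits has index at most the number of orbits and $(G_0)$ yields index $\ge 2$ only for nontrivial sets. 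Either way at least $k$ critical points $u_j$ with $I(u_j)=c_j$ exist.

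We expect the main obstacle to be the intersection lemma, i.e.\ proving $c_j\ge\tau$ from $(G_0)$ and the properties of $\gamma_G$ while keeping track of dimensions so that exactly $k={\rm dim}\,V-{\rm dim}\,Y$ levels arise. We would first try to transplant the argument of \cite[Theorem 3.9]{ABS}, which is purely topological and independent of the nonsmooth structure. The secondary technical point is ensuring that the deformation from Lemma \ref{EDL} fixes the boundary $\partial B_R(0)\cap V$, which we handle through the cutoff and averaging argument.
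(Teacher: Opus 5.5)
\textbf{There is a genuine gap in your contradiction step.} You write: ``Applying Lemma \ref{EDL} yields an equivariant $\alpha_s$ with $\sup_{A'}I(\alpha_s)\le \sup_{A'}I-2\varepsilon s<c$.'' This inequality does not follow.

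Lemma \ref{Def}, and hence Lemma \ref{EDL}, is only a short-time deformation. For each admissible compact set $A'$ it produces $\alpha_s$ for $s\in[0,s_0]$, where $s_0$ is chosen \emph{after} $A'$ and may be arbitrarily small. It only guarantees $\sup_{A'}I\circ\alpha_s\le\sup_{A'}I-2\varepsilon s$.

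Since you only know $\sup_{A'}I\le c+\varepsilon$, you get $\sup_{A'}I\circ\alpha_s\le c+\varepsilon-2\varepsilon s_0$. This is below $c$ only if $s_0>1/2$. Shrinking the excess to $\varepsilon'$ does not help, because $s_0=s_0(A')$. Iterating does not help either, because the successive $s_0$'s may be summable. Because $\Psi$ is merely l.s.c., no deformation of $I^{c+\varepsilon}\setminus N$ into $I^{c-\varepsilon}$ is available.

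This is exactly why the paper, following Szulkin, uses Ekeland's principle.
\begin{itemize}
\item It arranges that $(\Lambda_j,d_H)$ is a complete metric space, property $(P_1)$. This is the reason for the closure-type definition of $\Lambda_j$ through $\tilde\Lambda_j$.
\item It applies Theorem \ref{PVE} to $\varphi(A)=\sup_A I$ on $\Lambda_j$. The starting point is $A_2=A_1\setminus{\rm int}(N_{2\delta}(K_c))$, with $A_1\in\Lambda_{j+p}$ and $\varphi(A_1)<c+\varepsilon^2$.
\item This yields $A\in\Lambda_j$ with $A\cap N_\delta(K_c)=\emptyset$, $\varphi(A)<c+\varepsilon$, and $\varphi(B)-\varphi(A)\ge-\varepsilon\, d_H(B,A)$ for all $B\in\Lambda_j$.
\item By Lemma \ref{Def} $i)$, $d_H(\alpha_s(A),A)\le s$. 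Taking $B=\alpha_s(A)$ gives $-2\varepsilon s\ge-\varepsilon s$, a contradiction for every $s\in(0,s_0]$, however small $s_0$ is.
\end{itemize}
Your class $\{h(\overline{D\setminus E})\}$ is not evidently closed under Hausdorff limits. So you cannot simply insert this Ekeland step without enlarging the class as the paper does.

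\textbf{Two further problems.} First, your boundary condition $h=\mathcal{I}d$ on $\partial B_R(0)\cap V$ is not shown to survive the deformation. Lemma \ref{EDL} gives $\alpha_s$ only on a closed set $W$ with $A'\subset{\rm int}(W)$, so $\alpha_s\circ h$ is not even defined on all of $D$. The lemma also gives no identity property anywhere. The cutoff you invoke is not part of the lemma, and it is delicate for a merely l.s.c.\ $I$: points with $I\le c-2\varepsilon_0$ can be limits of points with $I\ge c-\varepsilon_0$, so Urysohn-type cutoffs are not available.

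The paper avoids this by weakening the boundary requirement in $\mathcal F$. It only asks that $\eta|_{\partial M_0}$ be equivariantly homotopic to the identity inside $I^{-d}$ for some $d>0$. Since $I(\alpha_s(u))\le I(u)+Ms$ by Lemma \ref{Def} $ii)$, short deformations preserve this with $\tilde d=d-Ms_0>0$. This is property $(P_4)$, and it is what gives $\alpha_s(A)\in\Lambda_j$.

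Second, your closing claim that $\gamma_G(K_c)\ge 2$ forces $K_c$ to be infinite is asserted rather than proved. For a general compact $G$ it would require each finite orbit to have index $1$, which you have not justified. The paper itself only extracts $K_{c_j}\neq\emptyset$ at this point.
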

	Next, let us introduce some notations and facts required for a precise description of levels $c_j$. Set $M_0:=\overline{B}_{R_0}(0)\cap V$ with $R_0>r$ and $I|_{\partial M_0}\leq -c_0$, for some fixed $c_0>0$ (this construction is possible in view of the assumption $ii)$ above). Following the assumptions in Theorem \ref{SP1}, consider $m:={\rm dim} Y$, ${\rm dim}V= m+k$ and fix the following notations
	$$\mathcal{F}:=\left\{\eta\in \Gamma_G(M_0):\,\,\begin{aligned}\text{there exists}\, &d=d(\eta)>0,\,\,\, \eta|_{\partial M_0}\approx \mathcal{I}d|_{\partial M_0}\,\,\text{in}\,\,I^{-d}\subset X\setminus\{0\}\\
	&\,\,\quad \quad \text{by an equivariant homotopy.}\end{aligned}\right\}.
	$$
	For each $j \in \{1,...,m+k\}$ set
	$$\tilde{\Lambda}_j:=\left\{\eta(M_0\setminus U)\,\,:\,\, \begin{aligned}\eta\in\mathcal{F}&,\, U\,\text{is $G$-invariant and open in}\, M_0,\,U\cap\partial M_0=\emptyset,\\
	& \text{with}\, \gamma_G(W)\leq (m+k)-j,\, \text{for}\,W\in\Sigma,\,W\subset U.\end{aligned}\right\}$$
	and 
	$$\Lambda_j:=\{A\subset X: A\,\text{is compact, $G$-invariant and for each open}\, U\supset A,\,\text{there is}\, A_0\in\tilde{\Lambda}_j,\,A_0\subset U\}.
	$$
	Finally, we fix
	$$c_j:=\inf_{A \in \Lambda_j}\sup_{u \in A}I(u).$$
	The collections $\Lambda_j$ and levels $c_j$ verifies the following properties
	\begin{itemize}
		\item[$(P_1)$] $(\Lambda_j,d_H)$ is a complete metric space, with $d_H$ denotes the \textit{symmetric Hausdorff} distance, namely, 
		$$d_H(A,B):=\max\{\sup_{x\in A}d(x,B),\sup_{y\in B}d(y,A)\},\quad A,\,B\in \Lambda_j.$$
		\item[$(P_2)$] $c_j \geq \tau$, for all $j > m;$
		\item[$(P_3)$] $\Lambda_{j+1}\subset \Lambda_j$$;$
		\item[$(P_4)$] Let $A \in \Lambda_j$ and $W$ be a closed $G$-invariant set containing $A$ in its interior. Assume also that $\alpha:W\rightarrow X$ is an equivariant mapping such that 
		$$\alpha|_{W \cap I^{-d}}\approx \mathcal{I}d|_{W \cap I^{-d}}\,\,\,\text{in}\,\,\,I^{-d},$$ 
		for some $d>0$, by an equivariant homotopy, then $\alpha(A) \in \Lambda_j$;
		\item[$(P_5)$] For each compact $B$ with $B \in \Sigma,$ $\gamma_G(B)\leq p,$ $I|_B>0,$ there exists a number $\delta_0>0$ such that $A\setminus\text{int}(N_\delta(B))\in \Lambda_j,$ for $A \in \Lambda_{j+p},$ $\delta\in(0,\delta_0)$.
	\end{itemize}
	The properties $(P_1)-(P_5)$ follow as made in \cite[Lemma 3.10]{ABS} and \cite[Theorem 4.4]{Szulkin}. Now, we are able to proof the Theorem \ref{SP1}.
	\begin{proof}[Proof of Theorem \ref{SP1}]
		From $(P_2)-(P_3)$ above we derive that
		$$0<\tau\leq c_{m+1}\leq\cdots\leq c_{m+k}.$$
	Let us perform to show that $\gamma_G(K_{c_j})\neq 0$ (and so $K_{c_j}\neq \emptyset$), for $j\in\{m+1,...,m+k\}$. Assume that it occurs $c:=c_j=\cdots=c_{j+p}$, for some $p\geq 0$. The proof is over if we get $\gamma_G(K_c)\geq p+1$. Indeed, it assures that $\gamma_G(K_{c_j})\geq 1$ for any $j$. Arguing by contradiction, suppose that $\gamma_G(K_c)<p+1$. One can see that $K_c$ is a compact $G$-invariant set with $0 \notin K_c$. For Lemma \ref{GP}-$v)$, we can fix $\delta\approx 0^+$ such that $\gamma_G(N_{2\delta}(K_c))=\gamma(K_c)$. Set $N=N_\delta(K_c)$ and $\varepsilon_0=\min{1,\delta}$. Now, choose $\varepsilon\in(0,\varepsilon_0)$ as in Lemma \ref{Def} and define
		$$\begin{array}{cccl}
		\varphi:&\Lambda_{j}&\rightarrow& (-\infty,\infty]\\
		& A&\mapsto& \varphi(A):={\displaystyle \sup_{u \in A}}I(u).
		\end{array}
		$$
	It can quickly seen that $\varphi$ is a l.s.c. functional  bounded from below by $c$ on the complete metric space $\Lambda_j$. Take $A_1\in \Lambda_{j+p}\subset \Lambda_j$ satisfying $\varphi(A_1)<c+\varepsilon^2=c_{j+p}+\varepsilon^2$ and let $A_2:=A_1\setminus{\rm int}(N_{2\delta}(K_c))$. By the Property $(P_5)$ above, for $\delta \approx 0^+$, it holds $A_2\in \Lambda_j$. By applying the Theorem \ref{PVE} to the functional $\varphi$, we can find $A\in \Lambda_j$ satisfying
	$$\varphi(A)\leq \varphi(A_2)<c+\varepsilon^2,\quad d_H(A,A_2)\leq \varepsilon$$
	and
	\begin{equation}\label{E1}
	\varphi(B)-\varphi(A)\geq -\varepsilon d_H(B,A),\quad B\in \Lambda_j.
	\end{equation}
	The choosing of $\varepsilon$ yields $N\cap A=\emptyset$. Furthermore, we have 
	$$c\leq \sup_{u\in A}I(u)=\varphi(A)<c+\varepsilon^2<c+\varepsilon.$$
	Therefore, we can apply the Lemma \ref{EDL} with $A$ and $N=N_{\delta}(K_c)$ to obtain an equivariant deformation $\alpha_s$, $0\leq s\leq s_0$, $s_0\approx 0^+$, satisfying $i)-iv)$ of Lemma \ref{Def}. By setting $B=\alpha_s(A)$, the condition in $ii)$ of Lemma \ref{Def} implies $I(\alpha_s(u))\leq I(u)+Ms_0$, $u\in A$. Then $I(\alpha_s(u))\leq -d+Ms_0$, provided $I(u)<-d$. Since $s_0\approx0^+$, we derive that $-\tilde{d}=-d+Ms_0<0$ and $\alpha_s(u)\in I^{-\tilde{d}}$. Hence, $\alpha_s$ can be viewed as an equivariant homotopy satisfying $(P_4)$ and consequently $B=\alpha_s(A)\in \Lambda_j$. By replacing $B$ in (\ref{E1}) and using the Part $iv)$ of Lemma \ref{Def} we find
	$$-2\varepsilon s\geq \sup_{u\in A} I(\alpha_s(u))-\sup_{u\in A}I(u)=\varphi(B)-\varphi(A)\geq -\varepsilon s.$$
	This contradiction concludes the proof.
	\end{proof} 
	
	The next corollary extends the information given in \cite[Corollary 3.6]{Motreanu} and works like a generalized version of symmetric Mountain Pass Theorem for functionals $I\in (H_1)$.
	\begin{corollary}\label{SP2}
		Let $I=\Phi + \Psi \in (H_1)$ be such that the $(\rm PS)$ condition holds and $I(0)=0$. Assume that $\Phi$ and $\Psi$ are $G$-invariant functionals with $\Psi$ a $G$-compatible functional. Finally, assume that there exist subspaces $Y,Z$ of $X$ such that $X=Y\oplus Z,$ $\dim Y<\infty,$ $Z$ is closed and
		\begin{enumerate}
			\item[$i)$] There are numbers $r,\tau>0$ such that $ I|_{\partial B_r(0)\cap Z} \geq \tau;$
			\item[$ii)$] For each positive integer $k$ there is a $k$-dimensional subspace $X_k$ of $X$ such that $I(u) \rightarrow -\infty$ as $\|u\|\rightarrow \infty$ with $u \in X_k$.
		\end{enumerate}	
		Then $I$ has a sequence critical values $(c_j)$ with $c_j\rightarrow \infty$ as $j\rightarrow \infty$.
	\end{corollary}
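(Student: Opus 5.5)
The strategy is to apply Theorem \ref{SP1} once for each $k$, using the subspaces $V=X_{m+k}$ with $m=\dim Y$, and then to show that the resulting levels are unbounded.

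First, fix $k\in\mathbb{N}$ and take $V:=X_{m+k}$ from hypothesis $ii)$. Then $\dim V=m+k>\dim Y$, and $I(u)\to-\infty$ as $\|u\|\to\infty$ in $V$. Together with hypothesis $i)$, this puts us in the setting of Theorem \ref{SP1}. Since $(\rm PS)$ holds at every level, Theorem \ref{SP1} yields critical values $c_{m+1}\le\cdots\le c_{m+k}$, each at least $\tau$.

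The levels must not depend on $k$. The sets $\Lambda_j$ are built from $M_0=\overline{B}_{R_0}(0)\cap V$, so they a priori depend on the choice of $V$. To avoid this, I would use the Szulkin device from \cite[Theorem 4.4]{Szulkin} and \cite[Corollary 3.6]{Motreanu}. One option is to choose the spaces $X_k$ increasing, after replacing $X_{k}$ by a nested family if necessary, and check that the minimax classes are monotone in $k$. The other option is simply to observe that for each $k$ we obtain $k$ distinct critical points with values $\ge\tau$, which is enough to get infinitely many critical points. Either way, the family of critical values $\{c_j\}$ is infinite provided we also know that distinct values arise, or that the critical sets have large index. The latter comes out of the proof of Theorem \ref{SP1}: if $c_j=\cdots=c_{j+p}$, then $\gamma_G(K_c)\ge p+1$.

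The core step is to show $c_j\to\infty$. Argue by contradiction: suppose the critical values are bounded, say $c_j\le\bar c$ for all $j$. By $(\rm PS)$, the set $K^{\bar c}:=\{u\in K:\tau\le I(u)\le\bar c\}$ is compact. It is $G$-invariant, and it does not contain $0$ because $I(0)=0<\tau$. By Proposition \ref{GP}-$v),vi)$, it has finite index $p$, and so does a small neighbourhood $N_\delta(K^{\bar c})$. Now take $j$ large and $c=\lim c_j\le\bar c$. Choose $A\in\Lambda_{j+p}$ with $\sup_A I<c+\varepsilon$, and remove $\mathrm{int}\,N_\delta(K^{\bar c})$ using $(P_5)$; the resulting set lies in $\Lambda_j$. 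Then apply the equivariant deformation of Lemma \ref{EDL} to the level $c$, together with the Ekeland argument exactly as in the proof of Theorem \ref{SP1}. This pushes the set below $c_j$, a contradiction. The point to watch is the usual one: the $(\rm PS)$ condition ensures that $K_c$ is empty or compact for every $c$ in the range, so the deformation applies uniformly near $c$.

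The main obstacle is the bookkeeping of the dependence of $\Lambda_j$ on $V$ and on $M_0$. One must check that the classes built from $X_{m+k}$ are compatible as $k$ grows, which is where the nesting of the $X_k$ matters. My first attempt would be to define the classes directly with all $X_k$ simultaneously, following Szulkin's proof verbatim. I would then verify $(P_1)$–$(P_5)$ in that setting, relying on \cite[Lemma 3.10]{ABS}.
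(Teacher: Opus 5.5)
Your outline has the same skeleton as the paper's proof. You apply Theorem~\ref{SP1} for each $X_k$, with $\Lambda_j$ built from all $M_k$, $k\ge j$, as the paper implicitly does in Section~\ref{Sec4}. This gives nondecreasing levels $c_j\ge\tau$. If these levels are bounded, you cut out a neighbourhood of a compact critical set of finite index via $(P_5)$, passing from $\Lambda_{j+p}$ to $\Lambda_j$, and contradict Ekeland's inequality \eqref{E1} with the equivariant deformation of Lemma~\ref{EDL}.

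The difference lies in what you cut out and where you deform. The paper cuts out $N_{2\delta}(K_c)$ with $c=\lim c_j$ and deforms at the level $c_j$. It therefore needs Lemma~\ref{TL} to know that $N_\delta(K_c)$ is a neighbourhood of $K_{c_j}$ for large $j$. You cut out a neighbourhood of $K^{\bar c}$, which is legitimate. A sequence in $K^{\bar c}$ has a subsequence with $I(u_n)\to d\in[\tau,\bar c]$, so it is a $(\mathrm{PS})_d$ sequence, and the argument in the proof of Lemma~\ref{TL} puts its limit in $K_d$. Moreover $K^{\bar c}$ is $G$-invariant with $I\ge\tau>0$ on it, so $(P_5)$ applies. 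Since $K^{\bar c}$ contains $K_c$ and every $K_{c_j}$ at once, Lemma~\ref{TL} becomes superfluous. If you deform at level $c$, $K_c$ alone would already suffice.

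The step that does not transfer ``exactly as in Theorem~\ref{SP1}'' is the use of Lemma~\ref{Def} at level $c$. In Theorem~\ref{SP1} the Ekeland set satisfies $\varphi(A)\ge c$ because $c=\inf_{\Lambda_j}\varphi$. Here $\inf_{\Lambda_j}\varphi_j=c_j$, which may be $<c$, so you only know $c_j\le\sup_A I$. Lemma~\ref{Def}, however, requires $c\le\sup_A I\le c+\varepsilon$.

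You therefore need a band version: one $\varepsilon$ that works for all compact $A\subset X\setminus N$ with $c-\varepsilon<\sup_A I\le c+\varepsilon$. This comes out of the proof of the deformation lemma. Its key estimate is proved by contradiction: points $u_n\notin N$ with $I(u_n)\to c$ and vanishing slope would form a $(\mathrm{PS})_c$ sequence accumulating on $K_c\subset\mathrm{int}\,N$. That argument, not the compactness of each $K_{c'}$, is what makes the deformation uniform near $c$.

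With the band version, take $j$ with $c-c_j<\varepsilon^2/2$ and $A_1\in\Lambda_{j+p}$ with $\sup_{A_1}I<c+\varepsilon^2/2$; your bound $c+\varepsilon$ is too coarse for Ekeland. Then $\varphi_j(A_2)<c_j+\varepsilon^2$, the Ekeland set avoids $N$, and the argument closes with $-2\varepsilon s\ge-\varepsilon s$. The contradiction is with \eqref{E1}, not a push below $c_j$, since $\alpha_s$ only exists for $s\le s_0\approx0^+$.

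The paper's sketch has the mirror-image issue: its $\varepsilon$ is fixed before $j$ and then used at level $c_j$. Deforming at the limit level, as you propose, is the cleaner choice. Finally, your ``other option'' of counting $k$ critical points for each $k$ says nothing about $c_j\to\infty$ and should be dropped.
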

	The proof of corollary requires the result in the lemma below, which is a generalized version of Lemma 3.7 in \cite{ABS}.
	\begin{lemma}\label{TL}
		Suppose that $I=\Phi + \Psi\in (H_1)$ satisfies the $(\rm PS)$ condition. Moreover, let $(c_j)$ be a real sequence such that $c_j \rightarrow c \in \mathbb{R}$. Then, given $\delta>0,$ there exists $j_0 \in \mathbb{N}$ such that
		$$ K_{c_j}\subset N_\delta(K_c),\quad j\geq j_0$$
	\end{lemma}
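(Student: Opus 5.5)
Argue by contradiction. If the statement fails, there are $\delta>0$ and a subsequence (still denoted $c_j$) together with points $u_j\in K_{c_j}$ such that $u_j\notin N_\delta(K_c)$ for every $j$. Each $u_j$ is a critical point of $I$, so it satisfies
$$\Phi^\circ(u_j,v-u_j)+\Psi(v)-\Psi(u_j)\geq 0,\quad \forall v\in X,$$
with $I(u_j)=c_j\to c$. This says exactly that $(u_j)$ is a $(\rm PS)_c$ sequence with $\varepsilon_j=0$; equivalently, by Lemma \ref{caracPS}, we may take $w_j=0\in\partial\Phi(u_j)+\partial\Psi(u_j)$. The $(\rm PS)$ condition then gives a subsequence with $u_j\to u$ in $X$.

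The main step is to show that $u\in K_c$. For this we pass to the limit in the critical point inequality.

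\emph{Generalized derivative term.} By Proposition \ref{21}-$ii)$ (upper semicontinuity of $(u,v)\mapsto\Phi^\circ(u,v)$),
$$\limsup_j \Phi^\circ(u_j,v-u_j)\le \Phi^\circ(u,v-u).$$

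\emph{Convex term.} Since $\Psi$ is lower semicontinuous, $\Psi(u)\le\liminf_j\Psi(u_j)$. This inequality points the wrong way for the term $-\Psi(u_j)$, so we also need an upper bound. Take $v=u$ in the critical point inequality for $u_j$:
$$\Psi(u_j)\le \Psi(u)+\Phi^\circ(u_j,u-u_j).$$
Because $\Phi$ is Lipschitz with some constant $L$ near $u$, we have $|\Phi^\circ(u_j,u-u_j)|\le L\|u-u_j\|\to0$. Hence $\limsup_j\Psi(u_j)\le\Psi(u)$, and together with lower semicontinuity $\Psi(u_j)\to\Psi(u)$. In particular $\Psi(u)<\infty$, so $u\in D(I)$.

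\emph{Conclusion of the limit.} For every fixed $v$, taking the $\limsup$ of the critical point inequality gives
$$\Phi^\circ(u,v-u)+\Psi(v)-\Psi(u)\ge0.$$
So $u$ is a critical point. Moreover $\Phi$ is continuous and $\Psi(u_j)\to\Psi(u)$, so $I(u)=\lim_j I(u_j)=\lim_j c_j=c$, and therefore $u\in K_c$.

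Finally, $u_j\to u\in K_c$ implies $d(u_j,K_c)\to0$, so $u_j\in N_\delta(K_c)$ for all large $j$. This contradicts the choice of the $u_j$.

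The obstacle is exactly the continuity of $\Psi$ along $(u_j)$: lower semicontinuity alone does not handle the $-\Psi(u_j)$ term. It is overcome by testing with $v=u$ and using the local Lipschitz bound on $\Phi^\circ$, as above.
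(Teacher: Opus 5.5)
Your proposal is correct and follows the paper's proof essentially step by step: you argue by contradiction, treat the critical points $u_j$ as a $(\rm PS)_c$ sequence, extract a convergent subsequence, and test the critical point inequality with $v=u$ to get $\limsup\Psi(u_j)\le\Psi(u)$. Combined with lower semicontinuity of $\Psi$ and upper semicontinuity of $\Phi^\circ$, this yields $u\in K_c$. You are slightly more explicit than the paper in two places: you justify $\Phi^\circ(u_j,u-u_j)\to 0$ through the local Lipschitz bound, and you verify $I(u)=c$.
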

	
	\begin{proof}
	If the result do not hold, it would be possible to find a subsequence $(c_{j_k})$ of $(c_j)$, a number $\delta_0>0$, and a sequence $(u_k)$ with $u_k \in K_{c_{j_k}}$ such that
	\begin{equation}\label{DI}
	d(u_k,K_c)>\delta_0,\,\,\, \forall k \in \mathbb{N}.
	\end{equation}
	On account that $u_k\in K_{c_{j_k}}$, one gets
	\begin{equation}\label{CPk}
	\Phi^\circ(u_k,v-u_k)+\Psi(v)-\Psi(u_k) \geq 0,\quad \forall v \in X,
	\end{equation}
	as well as
	$$
	I(u_k)=c_{j_k}\rightarrow c,
	$$
	so that $(u_k)$ is a $(\rm PS)_c$ sequence for the functional $I$. The $(\rm PS)$ condition yields the existence of $u_0 \in X$ and a subsequence of $(u_k)$, still denoted by $(u_k)$, such that $u_k \rightarrow u_0 \quad \mbox{in} \quad X$. By taking $v=u_0$ in (\ref{CPk}), we get $\limsup \Psi(u_k) \leq \Psi(u_0)$. This inequality in addition to the semicontinuity property of $\Psi$ and $\Phi^\circ(\cdot,\cdot)$ gives $\lim \Psi(u_k)=\Psi(u_0)$, so that $u_0 \in K_c.$ Hence $d(u_k,K_c) \to 0$ as $k \to \infty$, against (\ref{DI}).
	\end{proof}
	Next, we prove the Corollary \ref{SP2}.
	\begin{proof}[Proof of Corollary \ref{SP2}]
		By arguing as in the Theorem \ref{SP1} for each $k$-dimensional vector space $X_k$, $k>{\rm dim} Y$, we obtain a list of critical levels
		$$0<\tau\leq c_1\leq c_2\leq \cdots\leq c_j\leq \cdots .$$
		It remains to show that $c_j \to \infty$. If $c_j\nrightarrow \infty$, then there exists $c>0$ such that $c_j\to c$. Since $K_{c_j}\neq \emptyset$, by taking $u_j\in K_{c_j}$ and accounting $c_j\to c$, we find a $(\rm PS)_c$ sequence for $I$. Arguing as in the last lemma, one can sees that $K_c\neq \emptyset$. In addition, the conditions on $I$ imply that $K_c$ is a compact and $G$-invariant set with $0\notin K_c$.
		
		Now, the proof follows similar to what is made in \cite[Theorem 3.9]{ABS}. Fix $\delta\approx 0^+$ such that $\gamma_G(N_{2\delta}(K_c))=\gamma_G(K_c)=:r\in\mathbb{N}$ and set
		$\varepsilon_0:=\min\{1,\,\delta\},$
		take $\varepsilon \in (0,\varepsilon_0)$ as in Lemma \ref{Def} and define, for each $j\in \mathbb{N}$,
		
		$$\begin{array}{cccl}
		\varphi_j:&\Lambda_j&\rightarrow&(-\infty,+\infty]\\
		& A&\mapsto& \varphi(A)\coloneqq{\displaystyle\sup_{u \in A}I(u).}
		\end{array}
		$$
		Clearly $\varphi_j$ is a l.s.c. functional that is bounded from below. Let $A_1 \in \Lambda_{j+r}$ be such that
		$$\varphi_{j+r}(A_1)<c_{j+r}+\frac{\varepsilon^2}{2}.$$
		Since $(c_j)$ is convergent sequence, for large $ j$,
		$$\varphi_{j+r}(A_1)<c_{j+r}<c_j+\varepsilon^2.$$
		Fix $j\approx \infty$ satisfying the last inequality. Choosing $A_2:=\overline{A_1\setminus\text{int}(N_{2\delta}(K_c))}$, from $(P_5)$ above, we have $A_2 \in \Lambda_{j}$ and $\varphi_{j}(A_2)\leq\varphi_{j}(A_1)=\varphi_{j+r}(A_1)$ (note that $A_1\in \Lambda_{j+r}\subset\Lambda_j$). Setting $N:=N_\delta(K_c)$, the Lemma \ref{TL} implies that $K_{c_{j}}\subset N$ if $j$ is large enough. Hence, $N$ can be viewed as neighborhood of $K_{c_j}$. Now, the Theorem \ref{PVE} and the Lemmas \ref{Def} and \ref{EDL} can be used to achieve a contradiction, arguing with $\varphi_j$ instead of $\varphi$ as in the final of Theorem \ref{SP1}.  
		
	\end{proof}
	
\section{Applications to quasilinear problems involving the 1-Laplacian operator}\label{Sec4}

In this section, we will apply the abstract results from the previous section to prove the existence of infinite solutions to certain classes of quasilinear problems involving the $1$-Laplacian operator with different models of discontinuous nonlinearities.

\subsection{A problem with a discontinuous nonlinearity with linear-subcritical growth}\label{Ap1}

Here we will perform on the following class of problem whose the nonlinearity contains multiple discontinuities. Specifically, we study the problem
	$$
	\left \{
	\begin{array}{rclcl}
		-\Delta_1 u &=& \lambda\mbox{sign}(u) + H(|u|-a) |u|^{q-2}u, &\mbox{in}& \Omega;\\
		u &=& 0, & \mbox{on}  & \partial\Omega, \\
	\end{array}
	\right.\eqno{(P_{\lambda,a})}
	$$
where $\Omega \subset \mathbb{R}^N$ is a smooth bounded domain with $N \ge2$, $\lambda,a>0$ and $q \in (1,1^*)$. The notation $\mbox{sign}(s)$ denotes the sign function, defined as
	$$\mbox{sign}(s) \coloneqq\left \{
	\begin{array}{rclcl}
		\dfrac{s}{|s|},& s\neq 0;\\
		0,& s=0,&
	\end{array}
	\right.
	$$
and $H$ represents the Heaviside function, given by
	\begin{equation}\label{Heaviside}
		H(t) \coloneqq\left \{
		\begin{array}{rclcl}
			0,& t < 0;\\
			1,& t \ge 0.&
		\end{array}
		\right.
	\end{equation}
In order to provide a variational framework, consider the function $g_a : \mathbb{R} \to \mathbb{R}$, defined for a fixed $a>0$ by
	$$g_a(s) = H(|s| - a) |s|^{q-2}s,$$
which is locally bounded in $\mathbb{R}$. Its primitive, 
	$$G_a(s) \coloneqq \int_{0}^{s} H(|t| - a) |t|^{q-2}t dt,$$
is locally Lipschitz in $\mathbb{R}$. In this case, defining
	$$\underline{g}_a(s) \coloneqq \lim_{\varepsilon \to 0^+} ess \inf \{g_a(t)\; : \; \vert s-t\vert  < \varepsilon \} = \left\{ \begin{array}{rlc}
		|s|^{q-2}s,& \mbox{if} \ |s|> a;& \\
		0,& \mbox{if} \ |s| < a;& \\
		0,& \mbox{if} \ s= a;& \\
		-a^{q-1},& \mbox{if} \ s = -a,&
	\end{array}
	\right.$$
and
	$$\overline{g}_a(s) \coloneqq \lim_{\varepsilon \to 0^+} ess \inf \{g_a(t)\; : \; \vert s-t\vert  < \varepsilon \} = \left\{ \begin{array}{rlc}
		|s|^{q-2}s,& \mbox{if} \ |s|> a;& \\
		0,& \mbox{if} \ |s| < a;& \\
		a^{q-1},& \mbox{if} \ s= a;& \\
		0,& \mbox{if} \ s = -a,&
	\end{array}
	\right.$$
by the seminar due to Chang \cite{C}, it is possible to conclude that
	\begin{equation*}
		\partial G_a(s) = \left[\underline{g}_a(s), \overline{g}_a(s)\right] =	\left\{ \begin{array}{rlc}
			\{|s|^{q-2}s\},& \mbox{if} \; |s| >a;& \\
			\{0\},& \mbox{if} \; |s| <a;&\\
			\left[0,a^{q-1}\right],& \mbox{if} \; s=a;& \\
			\left[-a^{q-1},0\right],& \mbox{if} \; s=-a.&
		\end{array}
		\right.
	\end{equation*}
For future reference, note that, by definition of $G_a$,
	\begin{equation}\label{growthgradg}
		|t| \le |s|^{q-1}, \; \; \mbox{for any} \ t \in \partial G_a(s) \subset \mathbb{R}.
	\end{equation}
Furthermore, direct computations yield
	\begin{equation}\label{growthGs>a}
		G_a(s) = \left\{ \begin{array}{rlc}
			\dfrac{|s|^q}{q} - \dfrac{a^q}{q},& \mbox{if} \; |s| \ge a;& \\
			0,& \mbox{if} \; |s| <a,&
		\end{array} \right.
	\end{equation}
as well as,
	\begin{equation}\label{growthG}
		|G_a(s)| \le \dfrac{1}{q}|s|^q, \; \; \mbox{for any} \ s \in \mathbb{R}.
	\end{equation}	
	
Similarly, consider the function
	$$F(s) \coloneqq \int_{0}^{s}\mbox{sign}(t)dt,$$
which is locally Lipschitz in $\mathbb{R}$. Its subdifferential is
	$$\partial F(s) = \left\{ \begin{array}{rlc}
	\mbox{sign}(s),& \mbox{if} \; |s| \neq 0;& \\
	\left[-1,1\right],& \mbox{if} \; s=0.& 
	\end{array}
	\right.$$

Hereafter, we shall denote by $\mathcal{M}(\Omega,\mathbb{R}^N)$ (shortly $\mathcal{M}(\Omega)$) the space of all Radon measure on $\Omega$ and by $BV(\Omega)$ the space of the functions of bounded variation, that is
	\begin{equation}\label{BVdef}
		BV(\Omega) \coloneqq \{u \in L^1(\Omega) \; \; : \;\; Du \in \mathcal{M}(\Omega)\},
	\end{equation}
where $Du$ designates the distributional derivative of $u \in L^1(\Omega)$. Let us recall that $u \in BV(\Omega)$ if, and only if, $u\in L^1(\Omega)$ and
	$$\int_\Omega |Du| \coloneqq \sup \left\{\int_\Omega u\ \mbox{div}\phi \ dx \; \; : \;\; \phi \in C_0^1(\Omega,\mathbb{R}^N), \ \|\phi\|_\infty \le 1\right\} < \infty.$$
It is well known that a trace operator of the form $BV(\Omega)\hookrightarrow L^1(\partial \Omega)$ is well defined, so that the space $BV(\Omega)$ endowed by the norm
	$$\|u\|_{BV(\Omega)} \coloneqq \int_\Omega |Du| + \int_{\partial \Omega} |u| d \mathcal{H}^{N-1},$$
with $\mathcal{H}^{N-1}$ denoting the $(N-1)$-dimensional Hausdorff measure, is a Banach space. Moreover, the embedding
	$$BV(\Omega) \hookrightarrow L^r(\Omega), \quad r \in [1,1^*],$$
is continuous and compact for $r \in [1,1^*)$. For more details about the space $BV(\Omega)$ we refer \cite{AFP,EG}.

In the following definition, by using the notions introduced by Kawohl and Schuricht \cite{KS}, Degiovanni and Magroni \cite{DM} and Pimenta, dos Santos and Júnior \cite{PSJ}, we give a precise description for a solution of $(P_{\lambda,a})$ as follows:

\begin{definition}\label{def1}
	A solution for $(P_{\lambda,a})$ is a function $u \in BV(\Omega)$ such that, there exists ${\bf z} \in L^\infty(\Omega,\mathbb{R}^N)$ with $\|{\bf z}\|_\infty \le 1$, satisfying
		$$\left\{
			\begin{array}{rclcl}\displaystyle
				-\int_\Omega u \ \mbox{div}\, {\bf z}\ dx &=&  \displaystyle \int_\Omega |Du| + \int_{\partial \Omega} |u| d \mathcal{H}^{N-1} \\
				-\mbox{div} \, {\bf z} &=& \lambda\rho_1 (x) + \rho_2(x), \ \mbox{a.e. in} \ \Omega, 
			\end{array}
		\right.$$
	with $\rho_1(x) \in  \partial F(u)$ and $\rho_2(x) \in \partial G_a(u)$ almost everywhere in $\Omega$.
\end{definition}

For any $\lambda,a \ge 0$, we will consider the functionals $\Phi_1^\lambda, \Phi_2^a$ and $\Psi$ defined by 
	$$\Phi_1^\lambda (u) = \lambda \int_\Omega F(u)dx, \quad \Phi_2^a(u) = \int_\Omega G_a(u)dx, \quad u \in L^q(\Omega),$$
and
	$$\Psi(u)\coloneqq	\left\{ \begin{array}{clc} \displaystyle
		\int_\Omega |Du| + \int_{\partial\Omega} |u|d\mathcal{H}^{N-1},& \mbox{if} \; u \in BV(\Omega);& \\
		+\infty,& \mbox{if} \; u \in L^{q}(\Omega) \setminus BV(\Omega).&
	\end{array}
	\right.$$ 
Furthermore, our energy functional is $I_{\lambda,a}:L^q(\Omega) \to (-\infty,\infty]$ defined by
	$$I_{\lambda,a}(u) = \Phi_{\lambda,a}(u) + \Psi(u),$$
where $\Phi_{\lambda,a}(u) = \Phi_1^\lambda(u) + \Phi_2^a(u)$.

The conditions on $F$ and $G_a$ imply that $\Phi_{\lambda,a} \in Lip_{loc}(L^q(\Omega),\mathbb{R})$ and it is quickly seen that $\Psi$ is a lower semicontinuous and convex function. Thus, we may conclude that $I_{\lambda,a}\in (H_1)$, with $D(I_{\lambda,a}) = D(\Psi)=BV(\Omega)$. We would like to mention that, for a fixed $u \in L^p(\Omega)$, the subdifferential of $\Psi$ at $u$ can be identified as a subset of $L^p(\Omega)$, where $p=q/(q-1)$ denotes the conjugate exponent of $q$.

We present below some results related with the subdifferential of $\Psi$ required in our study. Their proofs can be found in \cite{ABS} Lemmas 4.23 and 4.24, respectively.

\begin{lemma}\label{Linf}
	If $\partial\Psi(u) \neq \emptyset$ and $u \in BV(\Omega)$, then $u \in L^\infty(\Omega)$.
\end{lemma}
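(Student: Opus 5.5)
The plan is to exploit the subgradient inequality for $\Psi$ with truncations of $u$, in a Stampacchia/De Giorgi style. The exponent relation is what makes it work without iteration: $q<1^*=N/(N-1)$ gives $p=q/(q-1)>N$.

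Fix $w\in\partial\Psi(u)\subset L^p(\Omega)$. By definition,
$$\Psi(v)-\Psi(u)\ge \int_\Omega w\,(v-u)\,dx\quad\text{for all } v\in L^q(\Omega).$$
For $k>0$, test with $v=\min(u,k)=u-(u-k)^+$, which lies in $BV(\Omega)$. This gives
$$\Psi(u)-\Psi(\min(u,k))\le \int_\Omega w\,(u-k)^+\,dx .$$

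The key structural fact I will use is the splitting
$$\Psi(u)=\Psi(\min(u,k))+\Psi((u-k)^+),\qquad k\ge 0.$$
For the interior part, the coarea formula gives $\int_\Omega|Du|=\int_{-\infty}^{\infty}P(\{u>t\},\Omega)\,dt$. The levels $t<k$ are exactly the level sets of $\min(u,k)$, and the levels $t\ge k$ are those of $(u-k)^+$, so the total variations add. For the boundary part, the trace commutes with Lipschitz truncations, and for $k\ge0$ we have pointwise $|s|=|\min(s,k)|+(s-k)^+$. Verifying this splitting carefully, especially the coarea decomposition of $|Du|$ as measures and the trace identity, is the main technical point; everything else is routine. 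Granting it, we obtain
$$\Psi((u-k)^+)\le \int_{A_k} w\,(u-k)^+\,dx,\qquad A_k:=\{u>k\}.$$

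Next I combine two estimates. The continuous embedding $BV(\Omega)\hookrightarrow L^{1^*}(\Omega)$, with the norm $\Psi$ including the boundary term, gives $\|(u-k)^+\|_{1^*}\le C\,\Psi((u-k)^+)$. Hölder's inequality with exponents $N$ and $1^*$, followed by Hölder again with $p>N$ on $A_k$, gives
$$\int_{A_k} w\,(u-k)^+\le \|w\|_{L^N(A_k)}\|(u-k)^+\|_{1^*}\le \|w\|_p\,|A_k|^{\frac1N-\frac1p}\,\|(u-k)^+\|_{1^*}.$$
Together these yield
$$\bigl(1-C\|w\|_p|A_k|^{\frac1N-\frac1p}\bigr)\|(u-k)^+\|_{1^*}\le 0 .$$
Since $u\in L^1(\Omega)$, we have $|A_k|\to0$ as $k\to\infty$, and since $\frac1N-\frac1p>0$, the bracket is positive for $k$ large. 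Hence $(u-k)^+=0$, that is, $u\le k$ a.e.

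The lower bound follows by the same argument with $v=\max(u,-k)$, or equivalently by applying the above to $-u$, noting that $\Psi$ is even and $-w\in\partial\Psi(-u)$. Thus $u\in L^\infty(\Omega)$.
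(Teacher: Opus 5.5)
Your argument is correct. The paper gives no proof of this lemma; it only refers to Lemma 4.23 of Alves--Bisci--da Silva. So there is no in-text route to compare against, and your truncation argument works as a self-contained proof. Each step checks out:
\begin{itemize}
\item testing the subgradient inequality with $\min(u,k)$;
\item the splitting of $\Psi$ at a level $k\ge 0$;
\item the embedding $\|v\|_{1^*}\le C\Psi(v)$;
\item the Hölder estimate and absorption;
\item the reduction of the lower bound to $-u$, using $-w\in\partial\Psi(-u)$.
\end{itemize}

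Two remarks on what your route actually needs.

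First, $\Psi$ is positively $1$-homogeneous, so the estimate
$\Psi((u-k)^+)\le C\,\|w\|_{L^N(A_k)}\,\Psi((u-k)^+)$
absorbs into itself, and the gap $p>N$ is not really needed. Absolute continuity of $\int|w|^N$ already gives $\|w\|_{L^N(A_k)}\to 0$ as $|A_k|\to 0$. Hence the same proof yields the lemma in the $L^{1^*}$ framework of Subsection~\ref{Ap2}, where only $\partial\Psi(u)\subset L^N(\Omega)$ is available. Your opening remark that $q<1^*$ is ``what makes it work'' slightly overstates its role: it only supplies an explicit rate.

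Second, the splitting you flag as the main technical point is the one-sided analogue of the identity $(R_2)$, which the paper takes from Degiovanni--Magrone. The cleanest way to verify it is to extend by zero. For $v\in BV(\Omega)$, the zero extension $\tilde v$ lies in $BV(\mathbb{R}^N)$ with $|D\tilde v|(\mathbb{R}^N)=\Psi(v)$. Truncation at a level $k\ge 0$ commutes with this extension, and this is exactly where $k\ge 0$ is used. The coarea formula in $\mathbb{R}^N$ then splits the interior and boundary contributions in one step, with no separate trace argument. The same extension also gives $\|v\|_{1^*}\le C\Psi(v)$ directly from the Sobolev inequality on $\mathbb{R}^N$.
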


\begin{lemma}\label{realiza}
	If $u \in BV(\Omega)$, then for each $w \in \partial\Psi(u),$ there exists ${\bf z} \in L^\infty(\Omega,\mathbb{R}^N)$, with $\|{\bf z}\|_\infty \le 1$, satisfying
		$$\left\{
		\begin{array}{rclcl}\displaystyle
			-\int_\Omega u \, \mbox{div}\ {\bf z}\ dx &=&  \displaystyle \int_\Omega |Du| + \int_{\partial \Omega} |u| d \mathcal{H}^{N-1}; \\
			-\mbox{div} \, {\bf z} &=& w, \ \mbox{a.e. in} \ \Omega. 
		\end{array}
		\right.$$
\end{lemma}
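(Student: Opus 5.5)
The statement to prove is Lemma \ref{realiza}: given $u\in BV(\Omega)$ and $w\in\partial\Psi(u)\subset L^p(\Omega)$, with $p=q/(q-1)$, we must produce a vector field ${\bf z}$ with $\|{\bf z}\|_\infty\le1$, $-\mbox{div}\,{\bf z}=w$, and the pairing identity. The plan is to use the one-homogeneity of $\Psi$ to turn the subgradient inequality into two separate facts. Then a duality (Hahn--Banach) argument on the gradient side produces ${\bf z}$.

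\textbf{Step 1: splitting the subgradient inequality.} The inequality defining $w\in\partial\Psi(u)$ is
$$\Psi(v)-\Psi(u)\ge\int_\Omega w(v-u)\,dx\quad\text{for all } v.$$
Testing it with $v=tu$ for $t\to\infty$ and for $t=0$ gives
$$\int_\Omega wu\,dx=\Psi(u)=\int_\Omega|Du|+\int_{\partial\Omega}|u|\,d\mathcal H^{N-1}.$$
Testing with $v=u+\varphi$ and using subadditivity of $\Psi$ then yields
$$\int_\Omega w\varphi\,dx\le\Psi(\varphi)\quad\text{for all }\varphi\in BV(\Omega)\cap L^q(\Omega).$$
For $\varphi\in C_c^\infty(\Omega)$ this reads $\int_\Omega w\varphi\le\int_\Omega|\nabla\varphi|$.

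\textbf{Step 2: constructing ${\bf z}$.} Consider the subspace $E=\{\nabla\varphi:\varphi\in C_c^\infty(\Omega)\}\subset L^1(\Omega,\mathbb R^N)$ and the linear functional $\ell(\nabla\varphi)=\int_\Omega w\varphi$. This functional is well defined, and Step 1 shows $|\ell(\nabla\varphi)|\le\|\nabla\varphi\|_{L^1}$. By Hahn--Banach, $\ell$ extends with norm at most $1$ to all of $L^1(\Omega,\mathbb R^N)$. Since $(L^1)^*=L^\infty$, the extension is represented by some ${\bf z}\in L^\infty(\Omega,\mathbb R^N)$ with $\|{\bf z}\|_\infty\le1$ and $\int_\Omega{\bf z}\cdot\nabla\varphi=\int_\Omega w\varphi$. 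This says exactly that $-\mbox{div}\,{\bf z}=w$ in distributions, and hence a.e., because $w\in L^p$.

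\textbf{Step 3: the pairing identity.} Once $-\mbox{div}\,{\bf z}=w$ is known, we get $-\int_\Omega u\,\mbox{div}\,{\bf z}\,dx=\int_\Omega wu\,dx$. By Step 1 this equals $\Psi(u)$, which is the first equation of the lemma. This step needs $wu\in L^1$, which holds since $u\in BV\subset L^q$ (or use Lemma \ref{Linf}).

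The main obstacle is bookkeeping in Step 1 when the test functions must stay in $L^q$. Homogeneity is applied to $u$ itself, which is admissible because $u\in BV(\Omega)\cap L^q(\Omega)$. Boundary terms cause no issue, since the test functions used in Step 2 are compactly supported. No Anzellotti pairing theory is needed, because only the scalar identity with $\int_\Omega wu$ is required.
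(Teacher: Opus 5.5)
Your argument is correct and complete. One-homogeneity and subadditivity of $\Psi$ give $\int_\Omega wu\,dx=\Psi(u)$ and $\int_\Omega w\varphi\,dx\le\int_\Omega|\nabla\varphi|\,dx$ for $\varphi\in C_c^\infty(\Omega)$. Hahn--Banach on $\{\nabla\varphi\}\subset L^1(\Omega;\mathbb{R}^N)$, taken with the Euclidean norm inside the integral so that the dual bound is exactly $\|{\bf z}\|_\infty\le 1$, then produces ${\bf z}$, and the pairing identity follows immediately.

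The paper does not prove this lemma itself; it only cites \cite{ABS}. Your route is the standard argument for this representation, so there is nothing to compare or add.
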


Next, we establish the connection between solutions of $(P_{\lambda,a})$ and critical points of $I_{\lambda,a}$.

\begin{lemma}\label{energyfunc}
	For any fixed $\lambda,a\ge0$, let $u_{\lambda,a}$ be a critical point of $I_{\lambda,a}$. Then, $u_{\lambda,a}$ is a solution of $(P_{\lambda,a})$ in the sense of the Definition \ref{def1}. Moreover, we have $u_{\lambda,a}\in BV(\Omega)\cap L^\infty(\Omega)$. 
\end{lemma}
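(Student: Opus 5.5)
The plan is to combine the subdifferential characterization of critical points with Chang's description of generalized gradients of integral functionals. Let $u=u_{\lambda,a}$ be a critical point of $I_{\lambda,a}$. By Definition \ref{cp}-$i)$ (equivalently Lemma \ref{caracPS} with $w_n=0$), $u\in D(I_{\lambda,a})=BV(\Omega)$ and
$$0\in \partial\Phi_{\lambda,a}(u)+\partial\Psi(u)\subset \partial\Phi_1^\lambda(u)+\partial\Phi_2^a(u)+\partial\Psi(u),$$
where the inclusion is Proposition \ref{21}-$iv)$. Hence there are $\xi_1\in\partial\Phi_1^\lambda(u)$, $\xi_2\in\partial\Phi_2^a(u)$ and $w\in\partial\Psi(u)$, all in $L^p(\Omega)$ with $p=q/(q-1)$, such that $w=-\xi_1-\xi_2$. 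In particular $\partial\Psi(u)\neq\emptyset$, so Lemma \ref{Linf} gives $u\in L^\infty(\Omega)$, which is the second assertion.

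The next step is to identify $\xi_1$ and $\xi_2$ pointwise. I would invoke Chang's theorem \cite{C} on integral functionals $\int_\Omega h(u)\,dx$ on $L^q(\Omega)$ with $h$ locally Lipschitz and with a growth bound of the type \eqref{growthgradg}. For $\Phi_2^a$ the bound is $|t|\le |s|^{q-1}$ for $t\in\partial G_a(s)$; for $\Phi_1^\lambda$ it is $|t|\le 1$. Chang's theorem yields $\partial\Phi_2^a(u)\subset\{\rho\in L^p:\rho(x)\in[\underline g_a(u(x)),\overline g_a(u(x))]=\partial G_a(u(x))\text{ a.e.}\}$. Similarly, $\xi_1=\lambda\rho_1$ with $\rho_1(x)\in\partial F(u(x))$ a.e. If one prefers to avoid quoting the theorem in this form, the inclusion can be checked directly by Fatou's lemma applied to difference quotients. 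This gives $\Phi^\circ(u;v)\le\int_\Omega h^\circ(u(x);v(x))\,dx$, and then taking $v$ supported on sets where $\xi(x)>\overline g(u(x))$ (resp. $<\underline g$) produces a contradiction.

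The last step uses Lemma \ref{realiza} applied to $w=\lambda\rho_1+\rho_2\in\partial\Psi(u)$, where the sign flip follows from $w=-\xi_1-\xi_2$ after renaming. This produces $\mathbf z\in L^\infty(\Omega,\mathbb{R}^N)$ with $\|\mathbf z\|_\infty\le1$, $-\int_\Omega u\,\mathrm{div}\,\mathbf z\,dx=\int_\Omega|Du|+\int_{\partial\Omega}|u|\,d\mathcal H^{N-1}$ and $-\mathrm{div}\,\mathbf z=\lambda\rho_1+\rho_2$ a.e. This is exactly Definition \ref{def1}.

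The main obstacle is the sign bookkeeping and the identification step. Because $0\in\partial\Phi(u)+\partial\Psi(u)$, we get $w=-\xi$, whereas the equation requires $-\mathrm{div}\,\mathbf z=+(\lambda\rho_1+\rho_2)$. The energy must therefore be read as $\Psi-\Phi_{\lambda,a}$ for the signs to match. I would handle this by using $\partial(-\Phi)=-\partial\Phi$ (Proposition \ref{21}-$iv)$ with scalar $-1$), so that $w=\xi_1+\xi_2$ and $w$ has the correct sign. The genuinely technical point is justifying Chang's pointwise inclusion in $L^q$ when $G_a$ has jump points at $\pm a$ and $F$ has one at $0$. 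There, the growth condition \eqref{growthgradg} and dominated or Fatou convergence are what make the argument work.
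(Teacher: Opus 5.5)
Your proposal is correct and follows the same route as the paper: you split $0\in\partial\Phi_{\lambda,a}(u)+\partial\Psi(u)$ via the sum rule, identify the two generalized gradients pointwise through Chang's characterization, and conclude with Lemmas \ref{Linf} and \ref{realiza}. Your explicit handling of the sign, reading the energy as $\Psi-\lambda\int_\Omega F(u)\,dx-\int_\Omega G_a(u)\,dx$ (which is what the paper's later computations actually use), makes precise a step the paper passes over silently when it goes from $-(\xi_1+\xi_2)=w$ to $\lambda\rho_1+\rho_2=w$.
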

\begin{proof}
	Consider $u_{\lambda,a} \in L^q(\Omega)$ a critical point of $I_{\lambda,a}$. Then, $I_{\lambda,a}(u_{\lambda,a})<\infty$ and
		$$0 \in \partial \Phi_{\lambda,a}(u_{\lambda,a}) +\partial \Psi(u_{\lambda,a}),$$
	which implies, for some $\xi_1 \in \partial\Phi_1^\lambda(u_{\lambda,a})$, $\xi_2 \in \partial\Phi_2^a(u_{\lambda,a})$ and $w \in \partial\Psi(u_{\lambda,a})$, that
		$$-(\xi_1+\xi_2) = w, \quad \mbox{a.e. in} \ \Omega.$$
	The characterizations of $\partial\Phi_1^\lambda(u_{\lambda,a})$ and $\partial\Phi_2^a(u_{\lambda,a})$ give us
		$$\lambda\rho_1(x) + \rho_2(x) = w(x), \quad \mbox{a.e. in} \ \Omega,$$
	with $\rho_1(x) \in \partial F(u_{\lambda,a})$ and $\rho_2(x) \in \partial G_a(u_{\lambda,a})$. The rest of the proof can be derived as consequence of Lemmas \ref{Linf} and \ref{realiza}.
\end{proof}

Our next step is proving that functional $I_{\lambda,a}$ satisfies the $(\rm PS)$ condition as follows:
\begin{lemma}\label{CondPS1}
	The functional $I_{\lambda,a}$ satisfies the $(\rm PS)$ condition for any $\lambda,a>0$ small enough.
\end{lemma}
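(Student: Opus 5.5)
Take a $(\rm PS)_c$ sequence $(u_n)\subset L^q(\Omega)$ for $I_{\lambda,a}$. Since $I_{\lambda,a}(u_n)\to c$, we may assume $u_n\in BV(\Omega)$. By Lemma \ref{caracPS}, there are $w_n\to 0$ in $L^p(\Omega)$, $p=q/(q-1)$, with $w_n\in\partial\Phi_{\lambda,a}(u_n)+\partial\Psi(u_n)$. Using the characterization of $\partial\Phi_1^\lambda$ and $\partial\Phi_2^a$ (as in Lemma \ref{energyfunc}), this means
$$w_n=\lambda\rho_{1,n}+\rho_{2,n}+\zeta_n,$$
where $\rho_{1,n}(x)\in\partial F(u_n(x))$, $\rho_{2,n}(x)\in\partial G_a(u_n(x))$ a.e., and $\zeta_n\in\partial\Psi(u_n)$. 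The plan has two steps: first show that $(u_n)$ is bounded in $BV(\Omega)$, then use compactness of the embedding $BV(\Omega)\hookrightarrow L^q(\Omega)$ ($q<1^*$) to get a strong limit in $L^q$.

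\emph{Boundedness.} Since $\Psi$ is positively $1$-homogeneous, $\langle\zeta_n,u_n\rangle=\Psi(u_n)$: test the subdifferential inequality with $v=0$ and $v=2u_n$. Also $\rho_{1,n}u_n=|u_n|$ a.e. and $\rho_{2,n}u_n=|u_n|^q$ when $|u_n|>a$. At $|u_n|=a$, from the formula for $\partial G_a$ we have $0\le\rho_{2,n}u_n\le a^q$. Hence
$$\langle w_n,u_n\rangle=\lambda\|u_n\|_1+\int_\Omega\rho_{2,n}u_n+\Psi(u_n).$$
I will compare $I_{\lambda,a}(u_n)-\tfrac1q\langle w_n,u_n\rangle$. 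I will use $G_a(s)-\tfrac1q\,t s\le 0$ for $t\in\partial G_a(s)$, which follows from \eqref{growthGs>a}: it equals $-a^q/q$ for $|s|>a$, is nonpositive at $|s|=a$, and vanishes for $|s|<a$. Since $I_{\lambda,a}(u_n)=-\lambda\|u_n\|_1-\int_\Omega G_a(u_n)+\Psi(u_n)$, this gives
$$c+o(1)+o(1)\|u_n\|_{L^q}\ \ge\ \Bigl(1-\tfrac1q\Bigr)\Psi(u_n)-\lambda\Bigl(1-\tfrac1q\Bigr)\|u_n\|_1\ \ge\ \Bigl(1-\tfrac1q\Bigr)\bigl(1-\lambda C\bigr)\|u_n\|_{BV},$$
by the continuous embedding $BV\hookrightarrow L^1$ with constant $C$. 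Here I take the paper's sign convention that the energy is $\Psi$ minus the primitives, which matches the equation $-\Delta_1u=f$. For $\lambda<1/C$, and using $\|u_n\|_{L^q}\le C'\|u_n\|_{BV}$, this yields boundedness in $BV(\Omega)$. This is where the smallness of $\lambda$ enters. I expect this estimate, especially handling the $a^q$ term at the discontinuity and the sign conventions, to be the main obstacle; the $a^q|\Omega|/q$ contributions only add bounded constants.

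\emph{Strong convergence.} By compactness, up to a subsequence $u_n\to u$ in $L^r(\Omega)$ for all $r<1^*$, in particular in $L^q$, and a.e. By lower semicontinuity, $u\in BV(\Omega)$. This already gives a convergent subsequence in $X=L^q(\Omega)$, which is all the $(\rm PS)$ condition requires, since the ambient space is $L^q$. If desired, I would also check $\Psi(u_n)\to\Psi(u)$ and that $u$ is critical, by arguing as in Lemma \ref{TL}. Take $v=u$ in the (PS) inequality and use the upper semicontinuity of $\Phi^\circ_{\lambda,a}$ (Proposition \ref{21}-$ii)$), together with the boundedness of the Clarke gradients in $L^p$ given by \eqref{growthgradg} and $|\rho_{1,n}|\le1$. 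This gives $\limsup\Psi(u_n)\le\Psi(u)$.
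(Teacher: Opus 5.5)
Your proposal is correct and follows essentially the paper's route: you use Lemma \ref{caracPS} to turn the (PS) condition into subgradient selections, estimate $I_{\lambda,a}(u_n)-\frac1q\langle\cdot,u_n\rangle$ to get $BV$-boundedness for small $\lambda$, and conclude by the compact embedding $BV(\Omega)\hookrightarrow L^q(\Omega)$. Your pointwise bound $\frac1q ts-G_a(s)\ge 0$ is sharper than the paper's $-C_2$, and your homogeneity argument for $\langle\zeta_n,u_n\rangle=\Psi(u_n)$ replaces the appeal to Lemma \ref{realiza}; one slip needs fixing, namely that with the energy $\Psi-\lambda\int F-\int G_a$ the decomposition should read $w_n=\zeta_n-\lambda\rho_{1,n}-\rho_{2,n}$, hence $\langle w_n,u_n\rangle=\Psi(u_n)-\lambda\|u_n\|_1-\int_\Omega\rho_{2,n}u_n$, whereas your displayed formulas carry the opposite signs (your final inequality is, however, exactly what the correct signs give).
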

\begin{proof}
	Suppose that $(u_n)$ is a $(\rm PS)$ sequence for $I_{\lambda,a}$. Thus, for some $c \in \mathbb{R}$
		$$I_{\lambda,a}(u_n) \to c, \quad \mbox{as} \ n \to \infty,$$
	and by Lemma \ref{caracPS} there exists $w_n \in \partial \Phi_{\lambda,a}(u_n) + \partial \Psi(u_n)$ with $w_n\to0$ in $L^p(\Omega)$. Since $\partial \Phi_{\lambda,a}(u_n) \subset \partial\Phi_1^\lambda(u_n) + \partial\Phi_2^a(u_n)$, it is possible to find $\rho_{1,n} \in \partial F(u_n)$ and $\rho_{2,n} \in \lambda\partial G_a(u_n)$ such that
		$$w_n(x) + \lambda\rho_{1,n}(x) + \rho_{2,n}(x) \in \partial \Psi(u_n).$$
	Thereby, by Lemma \ref{realiza},
		\begin{eqnarray}
			\Psi(u_n) &=& \int_\Omega |Du_n| + \int_{\partial \Omega} |u_n| d \mathcal{H}^{N-1} \nonumber \\
			&=& \lambda\int_\Omega \rho_{1,n} u_n dx + \int_\Omega \rho_{2,n} u_n dx + \int_\Omega w_nu_n dx, \quad \mbox{for any} \ n \in \mathbb{N}. \nonumber 
		\end{eqnarray}
	Setting $A(u_n) \coloneqq \Psi(u_n) - \left(\displaystyle \lambda\int_\Omega \rho_{1,n} u_n dx + \int_\Omega \rho_{2,n} u_n dx + \int_\Omega w_nu_n dx\right)$, by previous inequality we get $A(u_n)=0$ for any $n \in \mathbb{N}$. Thus, it follows that
		\begin{eqnarray}\label{lim1}
			o_n(1) + c &=& I_{\lambda,a}(u_n) - \dfrac{1}{q}A(u_n) \nonumber\\
			&=& \left(1-\dfrac{1}{q}\right)\|u_n\|_{BV(\Omega)} + \lambda\int_{\Omega} \left(\dfrac{1}{q} \rho_{1,n} u_n -  F(u_n)\right)dx + \int_{\Omega} \left(\dfrac{1}{q} \rho_{2,n} u_n - G_a(u_n)\right)dx \nonumber \\
			&& -\dfrac{1}{q} \int_{\Omega} w_n u_ndx.
		\end{eqnarray}
	Separately, note that, by combining the characterization of $\partial F(u_n)$ with the embedding $BV(\Omega) \hookrightarrow L^1(\Omega)$, we get
		\begin{equation}\label{lim2}
			\left|\lambda  \int_{\Omega} \left(\dfrac{1}{q} \rho_{1,n} u_n -F(u_n)\right)dx\right| \le \lambda \left(\dfrac{1}{q}\int_{\Omega} |\rho_{1,n}u_n|dx + \int_{\Omega} |u_n|dx\right) \le \lambda C \|u_n\|_{BV(\Omega)},
		\end{equation} 
	for some $C>0$. Note also that
		\begin{equation}\label{lim3}
			\dfrac{1}{q} \left|\int_{\Omega} w_n u_n dx\right|\le o_n(1)\|u_n\|_{BV(\Omega)}.
		\end{equation}
	By combining \eqref{lim1}, \eqref{lim2} and \eqref{lim3} we obtain for $\lambda>0$ small enough
		\begin{equation}\label{lim4}
			o_n(1) + c \ge C_1\|u_n\|_{BV(\Omega)} + \int_{\Omega} \left(\dfrac{1}{q} \rho_{2,n} u_n - G_a(u_n)\right)dx,
		\end{equation}
	for a convenient $C_1>0$ independent of $n \in \mathbb{N}$. Next, we claim that, for some $C_2>0$,
		\begin{equation}\label{lim5}
			\int_{\Omega} \left(\dfrac{1}{q} \rho_{2,n} u_n - G_a(u_n)\right)dx \ge -C_2.
		\end{equation}
	Indeed, first note that
		\begin{eqnarray}
			\int_{\Omega} \left(\dfrac{1}{q} \rho_{2,n} u_n - G_a(u_n)\right)dx \le \int_{[|u_n|>a]} \left(\dfrac{1}{q} \rho_{2,n} u_n - G_a(u_n)\right)dx + \int_{[|u_n|\le a]} \left(\dfrac{1}{q} \rho_{2,n} u_n - G_a(u_n)\right)dx \nonumber
		\end{eqnarray}
	Now, by \eqref{growthGs>a} and the definition of $\partial G_a(\cdot)$ we have
		\begin{eqnarray}\label{>a}
			\left|\int_{[|u_n|>a]} \left(\dfrac{1}{q} \rho_{2,n} u_n - G_a(u_n)\right)dx\right| =  \left|\int_{[|u_n|>a]} \left( \dfrac{1}{q}|u_n|^q - \dfrac{|u_n|^q}{q} - \dfrac{a^q}{q}\right)dx\right| \le \dfrac{a^q}{q}|\Omega|,
		\end{eqnarray}
	as well as,
		\begin{eqnarray}\label{<a}
			\left|\int_{[|u_n|\le a]} \left(\dfrac{1}{q} \rho_{2,n} u_n - G_a(u_n)\right)dx\right| =  \left|\int_{[|u_n|= a]} \left(\dfrac{1}{q} \rho_{2,n} u_n - G_a(u_n)\right)dx\right| \le \dfrac{2}{q}a^q|\Omega|.
		\end{eqnarray}
	Thus, \eqref{>a} and \eqref{<a} show \eqref{lim5} with $C_2 = \max \left\{\frac{a^q}{q}|\Omega|, \frac{2}{q}|\Omega|a^q\right\}$, as claimed. By \eqref{lim4} and \eqref{lim5}, it follows that
		$$o_n(1) + c \ge C_1\|u_n\|_{BV(\Omega)} - C_2.$$
	Hence, $(\|u_n\|_{BV(\Omega)})_{n \in \mathbb{N}}$ is a bounded sequence. Since $BV(\Omega) \hookrightarrow L^q(\Omega)$ compactly the sequence $(u_n)$ has a convergent subsequence in $L^q(\Omega)$ and the proof is complete.
\end{proof}

We are able to prove the first main result of this subsection, namely, the Theorem \ref{Main1}

\begin{proof}[Proof of Theorem \ref{Main1}]
	Firstly, by Lemma \ref{CondPS1}, if $\lambda \approx 0^+$, the functional $I_{\lambda,a}$ satisfies the $(\rm PS)$ condition for any $c \in \mathbb{R}$. Additionally, $I_{\lambda,a}$ is an even functional, meaning it is $\mathbb{Z}_2$-invariant. Therefore, to establish the existence of the sequence $\big(u_{\lambda,a}^{(n)}\big)_{n \in \mathbb{N}}$, it suffices to verify that $I_{\lambda,a}$ satisfies conditions $(i)$ and $(ii)$ of Corollary \ref{SP2}.

	\vspace{.5cm}
	\noindent {\it Verification of $(i)$:} It suffices to consider $u \in BV(\Omega) = D(I_{\lambda,a})$. Given $u \in BV(\Omega)$, by the embeddings $BV(\Omega) \hookrightarrow L^q(\Omega) \hookrightarrow L^1(\Omega)$ it holds 
		$$I_{\lambda,a}(u) \ge C_1\|u\|_q - \lambda C_2 \|u\|_q - C_3\|u\|_q^q,$$
	for some $C_1,C_2,C_3>0$. Now, for $\lambda \in (0,\lambda_0)$ with $\lambda_0$ sufficiently small, we can find a constant $D_1>0$ (depending on $\lambda$) such that
		$$I_{\lambda,a}(u) \ge D_1\|u\|_q - C_3\|u\|_q^q.$$
	Taking into account $q>1$, we derive that, for $\|u\|_q = r \approx 0^+$,
		\begin{equation}\label{Tau0}
		I_{\lambda,a}(u) \ge \tau,
		\end{equation}
	for a convenient $\tau>0$. We would like to emphasize the fact that $\tau>0$ can be chosen independent of $a$ (this fact will be used in the sequel). The above estimate proves the item $(i)$ of Corollary \ref{SP2} with $Z = L^q(\Omega)$.
	
	\vspace{.5cm}
	\noindent {\it Verification of $(ii)$:} Fix $k \in \mathbb{N}$ and let $X_k$ be a $k$-dimensional subspace of $C_0^\infty(\Omega) \subset L^q(\Omega)$. The equivalence of norms on $X_k$ allows us to find constants $A_k,B_k>0$ such that
		$$I_{\lambda,a}(u) \le A_k\|u\|_q + B_k\|u\|_q - \int_{\Omega} G_a(u)dx.$$
	By setting $C_k \coloneqq A_k+B_k$, the definition of $G_a$ and \eqref{growthGs>a} give us
		$$I_{\lambda,a}(u) \le C_k\|u^a\|q - \dfrac{1}{q} \|u^a\|_q^q + C_k\|u_a\|_q + \dfrac{a^q}{q}|\Omega|,$$
	where $u^a\coloneqq \chi_{[|u|> a]}u$ and $u_a\coloneqq \chi_{[|u|\le a]}u$, with $\chi$ denoting the characteristic function. Then, the last inequality implies that
		\begin{equation}\label{411}
			I_{\lambda,a}(u) \le C_k\|u^a\|_q - \dfrac{1}{q}\|u^a\|_q^q +C,
		\end{equation}
	where $C=C(\Omega,a,C_k)>0$. The next claim plays a crucial role in our argument.
	
	\vspace{.5cm}
	\noindent \underline{\it Claim 1:} Considering $(v_n) \subset L^q(\Omega)$ such that $\|v_n\|_q \to \infty$, then, for each fixed $a>0$, it holds
		$$\|v_n^a\|_q \to \infty.$$
	Indeed, note that, for each $n \in \mathbb{N}$, one can write
		$$\|v_n\|_q = \|(v_n)_a\|_q + \|v_n^a\|_q \le C(a,\Omega) + \|v_n^a\|_q,$$
	where $C(a,\Omega)>0$ is independent of $n$. Thus, since we have assumed $\|v_n\|_q \to \infty$ the statement in the claim holds, as desired.
	
	Thereby, from \eqref{411} we get
		$$I_{\lambda,a}(u) \to -\infty, \quad \mbox{as} \ \|u\|_q\to \infty,\,u\in X_k,$$
	which proves the item $(ii)$.
	
	With $(i)$ and $(ii)$ verified, Corollary \ref{SP2} ensures the existence of a sequence $(u_{\lambda,\,a}^n)_{n\in \mathbb{N}}$ of critical points of $I_{\lambda,a}$ for each $\lambda \approx0^+$ and $a>0$.

	It remains to prove $(M_1)$. In the sequel we may assume that $\lambda \in (0,\lambda_0)$, with $\lambda_0<1$. Suppose by contradiction that $(M_1)$ it is not verified. In this case, we get a sequence $a_k \to 0^+$ such that, for some critical point $u_{\lambda,a_k}^{n_k},$
		\begin{equation}\label{absurdo}
			\left|\,[ \,|u_{\lambda,a_k}^{n_k}| > a\,]\,\right|=0.
		\end{equation}
	To simplify notation, we write $v_{\lambda,k}$ instead of $u_{\lambda,a_k}^{n_k}$, that is $v_{\lambda,k}\coloneqq u_{\lambda,a_k}^{n_k}$. By recalling that $v_{\lambda,k}$ is a critical point of $I_{\lambda,a_k}$ obtained as a consequence of $(i)$ and $(ii)$ in Corollary \ref{SP2} we derive, from (\ref{Tau0}), that
		$$0<\tau \le I_{\lambda,a_k}(v_{\lambda,k}),$$
	with $\tau>0$ independent of $a_k>0$. On account that $v_{\lambda,k}$ is a critical point of $I_{\lambda,a_k}$, we get by Lemma \ref{realiza}  and by noting that
		$$\int_\Omega |Dv_{\lambda,k}| + \int_{\partial \Omega} |v_{\lambda,k}| d \mathcal{H}^{N-1} = \lambda \int_{\Omega} \rho_1^k v_{\lambda,k} dx + \int_{\Omega} \rho_2^k v_{\lambda,k} dx,$$
	with $\rho_1^k \in \partial F(v_{\lambda,k})$ and $\rho_2^k \in \partial G_{a_k}(v_{\lambda,k})$. Inasmuch as \eqref{absurdo} is in force, we derive
		\begin{eqnarray}
			0<\tau &\le& I_{\lambda,a_k}(v_{\lambda,k}) \nonumber \\
			&=& \lambda \int_{[|v_{\lambda,k}| \le a_k]} \rho_1^k v_{\lambda,k} dx + \int_{[|v_{\lambda,k}| \le a_k]} \rho_2^k v_{\lambda,k} dx \nonumber \\
			&& -\lambda \int_{[|v_{\lambda,k}| \le a_k]} F(v_{\lambda,k})dx - \int_{[|v_{\lambda,k}| \le a_k]} G_{a_k}(v_{\lambda,k})dx \nonumber \\
			&\le& \big(2\lambda a_k + C_q a_k^q\big)|\Omega| \nonumber \\
			&=& o_k(1), \nonumber
		\end{eqnarray}
	which lead us a contradiction, and the theorem follows.
\end{proof}

Now, we study the behavior of the solutions $u_{\lambda,a}^{(n)}$ of $(P_{\lambda,a})$, as $a \to 0^+$. Formally, as $a \to 0^+$, one should expect the solutions to converge to a solution of the following problem
$$
\left \{
\begin{array}{rclcl}
	-\Delta_1 u &=& \lambda\mbox{sign}(u) + |u|^{q-2}u, &\mbox{in}& \Omega;\\
	u &=& 0, & \mbox{on}  & \partial\Omega.\\
\end{array}
\right.\eqno{(P_{\lambda})}
$$
This is exactly what we prove in the following theorem.
\begin{theorem}\label{Main1.1}
	For $\lambda \in (0,\lambda_0)$, let $\Big(u_{\lambda,a}^{(n)}\Big)_{n \in \mathbb{N}}$ be as in Theorem \ref{Main1}. For $n \in \mathbb{N}$, there exists $u_{\lambda,0}^{(n)} \in BV(\Omega)$, such that, as $a \to 0^+$,
	$$
	u_{\lambda,a}^{(n)} \to u_{\lambda,0}^{(n)}, \quad \mbox{in $L^r(\Omega)$ and a.e. in $\Omega$,}
	$$
	for $r \in [1,1^*)$. Moreover, $u_{\lambda,0}^{(n)}$ is a bounded variation solution of $(P_{\lambda})$.
\end{theorem}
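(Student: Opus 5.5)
Fix $n\in\mathbb{N}$ and $\lambda\in(0,\lambda_0)$, and write $u_a\coloneqq u_{\lambda,a}^{(n)}$. The plan has three steps: a uniform $BV$ bound as $a\to0^+$, compactness, and passage to the limit in the subdifferential formulation of Definition \ref{def1}.

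\textbf{Step 1: uniform bound in $BV(\Omega)$.} First I bound the critical levels $c_n(a)=I_{\lambda,a}(u_a)$ uniformly in $a$. The classes $\Lambda_j$ are built from a fixed $k$-dimensional subspace $X_k\subset C_0^\infty(\Omega)$ and a ball $M_0$. Since $G_a(s)\ge \frac{|s|^q}{q}-\frac{a^q}{q}$ for all $s$, we have
$$I_{\lambda,a}\le I_{\lambda,0}+\frac{a^q}{q}|\Omega|.$$
Hence a single radius $R_0$, chosen for $a\le 1$, works for all small $a$. Testing with $A=M_0$, which is admissible with $\eta=\mathcal{I}d$, gives $c_n(a)\le \sup_{M_0}I_{\lambda,1}+C$, a bound independent of $a$. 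I then redo the computation of Lemma \ref{CondPS1} with $w_n\equiv0$, using the identity $\Psi(u_a)=\lambda\int\rho_1 u_a+\int\rho_2u_a$ from Lemma \ref{realiza}. This yields
$$c_n(a)\ge C_1\|u_a\|_{BV}-C_2a^q,$$
so $(u_a)$ is bounded in $BV(\Omega)$ uniformly for $a\in(0,1)$.

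\textbf{Step 2: compactness.} By the compact embedding $BV(\Omega)\hookrightarrow L^r(\Omega)$ for $r<1^*$, there is a sequence $a_k\to0^+$ and $u_0\in BV(\Omega)$ with $u_{a_k}\to u_0$ in $L^r$ and a.e. Lower semicontinuity of $\Psi$ gives $u_0\in BV(\Omega)$. The statement is phrased for the whole family $a\to0^+$; I will read it along subsequences, since uniqueness of the limit is not available.

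\textbf{Step 3: passage to the limit.} Let $\mathbf z_k$, $\rho_1^k$, $\rho_2^k$ be the fields from Definition \ref{def1}. I use the following bounds and limits.
\begin{itemize}
\item Since $\|\mathbf z_k\|_\infty\le1$, a subsequence satisfies $\mathbf z_k\rightharpoonup\mathbf z$ weak-$*$ in $L^\infty$, with $\|\mathbf z\|_\infty\le1$.
\item Since $|\rho_1^k|\le 1$ and $|\rho_2^k|\le|u_{a_k}|^{q-1}$ is bounded in $L^{p}$, we get $\rho_1^k\rightharpoonup\rho_1$ weak-$*$ in $L^\infty$ and $\rho_2^k\rightharpoonup\rho_2$ weakly in $L^{p}$.
\item Hence $-\mathrm{div}\,\mathbf z=\lambda\rho_1+\rho_2$ in the sense of distributions, and therefore a.e.
\end{itemize}

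Next I identify the limits $\rho_1$ and $\rho_2$.
\begin{itemize}
\item On $[u_0\neq0]$ we have $u_{a_k}\to u_0$ pointwise. For $k$ large, $|u_{a_k}|>a_k$ there, so $\rho_2^k=|u_{a_k}|^{q-2}u_{a_k}\to|u_0|^{q-2}u_0$. Also $\rho_1^k=\mathrm{sign}(u_{a_k})\to\mathrm{sign}(u_0)$.
\item On $[u_0=0]$, $|\rho_2^k|\le|u_{a_k}|^{q-1}\to0$ a.e.
\item By Vitali/Egorov, $\rho_2=|u_0|^{q-2}u_0$. Since $[-1,1]$ is convex and the weak-$*$ limit is taken pointwise-convexly, $\rho_1\in\partial F(u_0)$ a.e.
\end{itemize}
Thus $\rho_2\in\partial G_0(u_0)$ and $\rho_1\in\partial F(u_0)$.

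\textbf{Main obstacle: the energy identity.} It remains to show
$$-\int u_0\,\mathrm{div}\,\mathbf z=\Psi(u_0).$$
The inequality $\le$ always holds when $\|\mathbf z\|_\infty\le1$, by the Anzellotti pairing $(\mathbf z,Du_0)\le|Du_0|$ together with the boundary term. For $\ge$, I would combine lower semicontinuity of $\Psi$ with convergence of the right-hand side:
$$\Psi(u_0)\le\liminf\Psi(u_{a_k})=\liminf\Bigl(\lambda\int\rho_1^ku_{a_k}+\int\rho_2^ku_{a_k}\Bigr)=\lambda\int\rho_1u_0+\int\rho_2u_0=-\int u_0\,\mathrm{div}\,\mathbf z.$$
The limit of the products follows from strong convergence of $u_{a_k}$ in $L^q$ (and in $L^1$) paired with the weak convergence of $\rho_i^k$. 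This shows $u_0$ is a bounded variation solution of $(P_\lambda)$.

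If nontriviality of $u_0$ is also desired, it follows from $c_n(a)\ge\tau$ with $\tau$ independent of $a$, together with the convergence of energies along the subsequence, as in the proof of $(M_1)$.
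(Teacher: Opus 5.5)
Your proof is correct and follows the paper's skeleton: a uniform bound on the minimax level, the $BV$ estimate of Lemma \ref{lemma:bounded}, the compact embedding, weak-$*$ limits of $\mathbf z$ and of $\rho_1,\rho_2$, and then the energy identity. You also make explicit the passage to subsequences, which the paper tacitly needs too. The differences are local. You bound $c_n(a)$ by testing directly with $M_0$ instead of invoking the monotonicity Lemma \ref{Lemma.monotonicity}. You also prove the energy identity in the integral form required by Definition \ref{def1}, via lower semicontinuity of $\Psi$ and $\Psi(u_{a_k})=\int(\lambda\rho_1^k+\rho_2^k)u_{a_k}\,dx$, rather than deferring the local identities $(\mathbf z,Du)=|Du|$ and $-u[\mathbf z,\nu]=|u|$ to \cite{PSJ}. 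For the Anzellotti upper bound, add the check that $\mathrm{div}\,\mathbf z\in L^N$, which holds because $|u_0|^{q-1}\in L^{1^*/(q-1)}\subset L^N$ when $q<1^*$.
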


Before presenting the proof of the theorem above, let us remember that, according the notations in Section \ref{Sec3}, for each $j \in \mathbb{N}$, $c_{j,a}$, the minimax level associated to $I_a$, is given by
$$
c_{j,a}\coloneqq \inf_{A \in \Lambda_{j,a}}\sup_{u \in A}I_a(u),\quad j\in\{1,...,k\},
$$
where 
$$
\Lambda_{j,a} = \left\{A \subset X: 
\begin{array}{c}\mbox{$A$ is compact, $G-$invariant and for each open $U \supset A$,} \\ \mbox{there is $A_0 \in \tilde{\Lambda}_{j,a}$, $A_0 \subset U$.}
\end{array}\right\},
$$
$$
\tilde{\Lambda}_{j,a} = 
\left\{ \eta(M_k \backslash U) :
\begin{array}{c}
	\mbox{$\eta \in \mathcal{F}_a$, $U$ is $G$-invariant and open in $M_k$, $U \cap \partial M_k = \emptyset$},\\
	\mbox{with $\gamma_G(W) \leq k-j$, for $W \in \Sigma$, $W \subset U$, $k\geq j$}. 
\end{array}
\right\},
$$
for each $j \in \mathbb{N}$ and $k \geq j$. In the expression above, recall that
$$
\mathcal{F}_a = 
\left\{ 
\eta \in \Gamma_G(M_k) : 
\begin{array}{c}
	\mbox{there exists $d = d(\eta) > 0$, $\eta|_{\partial M_k} = \mathrm{Id}|_{\partial M_k}$ in $I_a^{-d} \subset X \backslash \{0\}$} \\
	\mbox{by an equivariant homotopy.}
\end{array} 
\right\},
$$
where $M_k = \overline{B}_{R_0}(0) \cup X_k$, $R_0$ is big enough and $I |_{\partial M_k} < 0$ and $X_k$ is a $k-$dimensional subspace of $X$ such that $I_a(u) \to -\infty$, as $\|u\| \to +\infty$, with $u \in X_k$.

\begin{lemma}
	\label{Lemma.monotonicity}
	If $0 < a_1 < a_2$, then $c_{j,a_1} \leq c_{j,a_2}$.
\end{lemma}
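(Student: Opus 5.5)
The plan is to compare the two functionals pointwise and then show that the admissible classes of sets are nested in the favourable direction. Write $I_a = \Psi - \lambda\int_\Omega F(u)\,dx - \int_\Omega G_a(u)\,dx$, which is the sign convention under which $I_a$ is the energy of $(P_{\lambda,a})$. The first step is the elementary inequality $G_{a_1}(s)\ge G_{a_2}(s)$ for all $s\in\mathbb{R}$ whenever $0<a_1<a_2$. I would check it from \eqref{growthGs>a} in three ranges:
\begin{itemize}
\item for $|s|\ge a_2$, we have $G_{a_1}(s)-G_{a_2}(s)=(a_2^q-a_1^q)/q>0$;
\item for $a_1\le |s|<a_2$, we have $G_{a_1}(s)=(|s|^q-a_1^q)/q\ge 0=G_{a_2}(s)$;
\item for $|s|<a_1$, both functions vanish.
\end{itemize}
Integrating gives $I_{a_1}(u)\le I_{a_2}(u)$ for every $u\in L^q(\Omega)$.

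The second step fixes common data for the two minimax schemes. The sets $X_k$ are chosen in $C_0^\infty(\Omega)$ independently of $a$, as in the verification of $(ii)$ in the proof of Theorem \ref{Main1}. I would pick $R_0$ large enough that $I_{a_2}|_{\partial M_k}<0$. Then automatically $I_{a_1}|_{\partial M_k}\le I_{a_2}|_{\partial M_k}<0$, so the same $M_k$ serves for both values of $a$. The remaining ingredients of $\tilde\Lambda_{j,a}$ do not involve $a$ at all: the $G$-invariant open sets $U$, the index bound $\gamma_G(W)\le k-j$, and the group action. So the only $a$-dependence enters through $\mathcal{F}_a$.

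The third step is the inclusion $\mathcal{F}_{a_2}\subset\mathcal{F}_{a_1}$. Since $I_{a_1}\le I_{a_2}$, we have $I_{a_2}^{-d}\subset I_{a_1}^{-d}$ for every $d>0$. Hence an equivariant homotopy from $\eta|_{\partial M_k}$ to the identity with values in $I_{a_2}^{-d}\subset X\setminus\{0\}$ also takes values in $I_{a_1}^{-d}$, with the same $d$. This gives $\tilde\Lambda_{j,a_2}\subset\tilde\Lambda_{j,a_1}$. Because membership in $\Lambda_{j,a}$ only asks that every open neighbourhood of $A$ contain some element of $\tilde\Lambda_{j,a}$, it follows that $\Lambda_{j,a_2}\subset\Lambda_{j,a_1}$.

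The conclusion then follows from
\[
c_{j,a_1}=\inf_{A\in\Lambda_{j,a_1}}\sup_{A}I_{a_1}\le \inf_{A\in\Lambda_{j,a_2}}\sup_{A}I_{a_1}\le \inf_{A\in\Lambda_{j,a_2}}\sup_{A}I_{a_2}=c_{j,a_2}.
\]
The step I expect to need the most care is the second one. One must make sure the choice of $M_k$ (that is, of $R_0$ and $X_k$) is made uniformly in $a$, or at least compatibly for $a_1<a_2$, so that the two families $\Lambda_{j,a_i}$ are built on the same base set. Otherwise the inclusion of the classes is not even meaningful. Choosing $R_0$ relative to the larger parameter $a_2$, as above, handles this.
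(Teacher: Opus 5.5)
Your proposal is correct and follows essentially the paper's argument. Both proofs use the pointwise comparison $I_{\lambda,a_1}\le I_{\lambda,a_2}$, then the inclusion $\mathcal{F}_{a_2}\subset\mathcal{F}_{a_1}$, which gives $\Lambda_{j,a_2}\subset\Lambda_{j,a_1}$, and then the same two-step inequality chain (you just perform the two steps in the opposite order).

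Your version is more careful than the paper's in three places the paper leaves implicit:
\begin{itemize}
\item You verify $G_{a_1}\ge G_{a_2}$ explicitly under the correct minus-sign convention.
\item You fix a common $M_k$ by choosing $R_0$ for $a_2$.
\item You note that the entire homotopy lies in $I_{a_2}^{-d}\subset I_{a_1}^{-d}$, not just its endpoints.
\end{itemize}
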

\begin{proof}
	First of all, by the definition of $I_{\lambda,a}$, note that 
	$$
	I_{\lambda,a_1}(u) \leq I_{\lambda,a_2}(u), \quad \forall u \in BV(\Omega).
	$$
	Moreover, if $X_k$ is a $k-$dimensional subspace of $X$ such that $I_{\lambda,a_2}(u) \to -\infty$, as $\|u\| \to +\infty$, with $u \in X_k$, then the same holds for $I_{\lambda,a_1}$.
	
	Now, let us prove that
	\begin{equation}
		\label{eq:F2inF1}
		\mathcal{F}_{a_2} \subset \mathcal{F}_{a_1}.
	\end{equation}
	Indeed, let $\eta \in \mathcal{F}_{a_2}$, then $\eta|_{\partial M_k}$ is homotopic to $\mathrm{Id}|_{\partial M_k}$ and there exists $d > 0$, such that
	$$
	I_{\lambda,a_2}|_{\partial M_k} \leq -d \quad \mbox{and} \quad I_{\lambda,a_2}(\eta(\cdot))|_{\partial M_k} \leq -d.
	$$
	Since $I_{\lambda,a_1} \leq I_{\lambda,a_2}$, it follows that
	$$
	I_{\lambda,a_1}|_{\partial M_k} \leq -d \quad \mbox{and} \quad I_{\lambda,a_1}(\eta(\cdot))|_{\partial M_k} \leq -d.
	$$
	Hence, $\eta \in \mathcal{F}_{a_1}$. As a result, $\Lambda_{j,a_2} \subset \Lambda_{j,a_1}$. Then
	\begin{eqnarray*}
		c_{j,a_1} & = & \inf_{A \in \Lambda_{j,a_1}} \sup_{u \in A} I_{\lambda,a_1}(u)\\
		& \leq & \inf_{A \in \Lambda_{j,a_1}} \sup_{u \in A} I_{\lambda,a_2}(u)\\
		& \leq & \inf_{A \in \Lambda_{j,a_2}} \sup_{u \in A} I_{\lambda,a_2}(u)\\
		& = & c_{j,a_2}.
	\end{eqnarray*}
	
\end{proof}

From now on, let us fix $a_0 > 0$. Moreover, to simplify the notation, we denote by $u_{\lambda,a}$ one of the solutions $u_{\lambda,a}^{(n)}$, for some $n \in \mathbb{N}$. Then, from the last result,
\begin{equation}
	\label{eq:Ibounded}
	I_{\lambda,a}(u_{\lambda,a}) \leq C, \quad \forall a \in (0,a_0).
\end{equation}

\begin{lemma}
	\label{lemma:bounded}
	For $\lambda > 0$ small enough, $(u_{\lambda,a})_{0 < a < a_0}$ is bounded in $BV(\Omega)$.
\end{lemma}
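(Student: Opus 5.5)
The plan is to repeat the energy estimate from the proof of Lemma \ref{CondPS1}, now for exact critical points, so that $w_n\equiv 0$. The only new point is to make all constants uniform in $a\in(0,a_0)$. The upper bound comes from \eqref{eq:Ibounded}, which rests on the monotonicity Lemma \ref{Lemma.monotonicity}. Indeed, $I_{\lambda,a}(u_{\lambda,a})=c_{j,a}\le c_{j,a_0}=:C$ for all $a\in(0,a_0)$.

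Since $u_{\lambda,a}$ is a critical point, Lemma \ref{energyfunc} and Lemma \ref{realiza} give ${\bf z}$, $\rho_1\in\partial F(u_{\lambda,a})$ and $\rho_2\in\partial G_a(u_{\lambda,a})$ a.e. such that
$$\|u_{\lambda,a}\|_{BV(\Omega)}=\lambda\int_\Omega\rho_1u_{\lambda,a}\,dx+\int_\Omega\rho_2u_{\lambda,a}\,dx.$$
Subtracting $\frac1q$ times this identity from $I_{\lambda,a}(u_{\lambda,a})$, exactly as in \eqref{lim1} with $w_n=0$, I obtain
$$C\ge I_{\lambda,a}(u_{\lambda,a})=\Big(1-\tfrac1q\Big)\|u_{\lambda,a}\|_{BV}+\lambda\int_\Omega\Big(\tfrac1q\rho_1u_{\lambda,a}-F(u_{\lambda,a})\Big)dx+\int_\Omega\Big(\tfrac1q\rho_2u_{\lambda,a}-G_a(u_{\lambda,a})\Big)dx .$$

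The $\lambda$-term is handled as in \eqref{lim2}. Since $|\rho_1|\le1$ and $|F(s)|=|s|$, it is bounded by $\lambda(1+\frac1q)\|u_{\lambda,a}\|_1\le\lambda C_\Omega\|u_{\lambda,a}\|_{BV}$, with $C_\Omega$ the embedding constant of $BV(\Omega)\hookrightarrow L^1(\Omega)$, which does not depend on $a$. Choosing $\lambda$ small so that $\lambda C_\Omega<\frac12(1-\frac1q)$ leaves a coercive term $C_1\|u_{\lambda,a}\|_{BV}$, again independent of $a$.

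The last integral is split over the sets $[|u|>a]$, $[|u|<a]$ and $[|u|=a]$, as in \eqref{>a} and \eqref{<a}.
\begin{itemize}
\item On $[|u|>a]$, using \eqref{growthGs>a}, the integrand equals $\frac{a^q}{q}\ge0$.
\item On $[|u|<a]$ it vanishes.
\item On $[|u|=a]$ we have $G_a=0$ and $\rho_2u_{\lambda,a}\in[0,a^q]$, so the integrand is nonnegative.
\end{itemize}
Hence the whole integral is nonnegative, and in any case is $\ge -\frac{2}{q}a_0^q|\Omega|$. Either bound is uniform in $a\in(0,a_0)$, and together they give $C\ge C_1\|u_{\lambda,a}\|_{BV}-C_2$, which is the claimed boundedness.

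The main obstacle is making sure the upper bound $C$ in \eqref{eq:Ibounded} is genuinely uniform. This requires that the minimax levels used in Theorem \ref{Main1} are the same $c_{j,a}$ defined with subspaces $X_k$ independent of $a$, which is what the monotonicity lemma provides. Everything else is the computation from Lemma \ref{CondPS1} with the $a$-dependence tracked.
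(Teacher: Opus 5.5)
Your proposal is correct and is essentially the paper's argument. Both subtract $\frac1q$ times the Euler--Lagrange identity from the energy and bound the energy uniformly in $a$ via the monotonicity of $c_{j,a}$. Both then absorb the $\lambda$-term through $BV(\Omega)\hookrightarrow L^1(\Omega)$ for small $\lambda$ and discard the nonnegative term $\int_\Omega\big(\frac1q\rho_2u_{\lambda,a}-G_a(u_{\lambda,a})\big)dx$. The only cosmetic difference is in the $\lambda$-term: the paper evaluates it exactly as $-\lambda(1-\frac1q)\|u_{\lambda,a}\|_1$, using $\rho_1u_{\lambda,a}=|u_{\lambda,a}|=F(u_{\lambda,a})$, whereas you bound it in absolute value.
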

\begin{proof}
	Note that
	\begin{eqnarray*}
		C & \geq & I_{\lambda,a}(u_{\lambda,a}) - \frac{1}{q} \langle w_{\lambda,a} - \lambda \rho_{\lambda,a}^1 - \rho_{\lambda,a}^2, u_{\lambda,a}\rangle\\
		& = & \|u_{\lambda,a}\|_{BV(\Omega)} - \frac{1}{q}\langle w_{\lambda,a},u_{\lambda,a} \rangle + \frac{\lambda}{q} \langle \rho_{\lambda,a}^1,u_{\lambda,a} \rangle\\
		& & - \lambda \int_\Omega |u_{\lambda,a}| dx + \frac{1}{q} \langle \rho_{\lambda,a}^2,u_{\lambda,a}\rangle - \frac{1}{q} \int_\Omega G_a(u_{\lambda,a})dx\\
		& = & \left(1 - \frac{1}{q}\right)\|u_{\lambda,a}\|_{BV(\Omega)} - \lambda\left(1 - \frac{1}{q}\right)\|u_{\lambda,a}\|_q+ \int_\Omega \left(\frac{1}{q} \rho_{\lambda,a}^2 u_{\lambda,a} - G_a(u_{\lambda,a})dx \right)\\
		& \geq & \left(1 - \frac{1}{q}\right)\|u_{\lambda,a}\|_{BV(\Omega)} - \lambda\left(1 - \frac{1}{q}\right)\|u_{\lambda,a}\|_q\\
		& \geq & \left(1 - \lambda C \right)\left(1 - \frac{1}{q}\right)\|u_{\lambda,a}\|_{BV(\Omega)}.
	\end{eqnarray*}
	Hence, if $\lambda > 0$ is small enough, $(u_{\lambda,a})_{0 < a < a_0}$ is bounded in $BV(\Omega)$.
\end{proof}

By combining Lemma \ref{lemma:bounded} and the compact embedding $BV(\Omega) \hookrightarrow L^r(\Omega)$, with $1\le r <1^*$, we can consider $u_{\lambda,0} \in BV(\Omega)$, be such that 
\begin{equation}
	\label{eq:embedding}
	u_{\lambda,a} \to u_{\lambda,0}\quad \mbox{in $L^r(\Omega)$, for $1 \leq r < 1^*$.}
\end{equation}

As in \cite[Lemma 3.5]{PSJ}, we can show that there exists $\rho_{\lambda,0}^2 \in L^\frac{q}{q-1}(\Omega)$, such that, as $a \to 0^+$,
\begin{equation}
	\label{eq:embeddingrho1}
	\left\{\begin{array}{cl}
		\rho_{\lambda,a}^2 \rightharpoonup \rho_{\lambda,0}^2 & \mbox{in $L^\frac{q}{q-1}(\Omega)$;} \\
		\rho_{\lambda,a}^2 \to \rho_{\lambda,0}^2 & \mbox{a.e. in $\Omega$;} \\
		0 \leq \rho_{\lambda,a}^2 \leq |u_{\lambda,0}|^{q-1} & \mbox{a.e. in $\Omega$}.
	\end{array}\right.
\end{equation}
Moreover, since 
\begin{equation}
	\label{eq:embeddingrho2}
	\rho_{\lambda,a}^2(x) \in 
	\left\{
	\begin{array}{ll}
		\{0\}, & \text{if } u_{\lambda,a}(x) < a;\\
		\left[0,a^{q-1}\right], & \text{if } u_{\lambda,a}(x) = a;\\
		\{u_{\lambda,a}^{q-1}\}, & \text{if } u_{\lambda,a}(x) > a,
	\end{array}
	\right.
\end{equation}
then, up to a null measure set, if $u_{\lambda,0}(x) > 0$, then
\begin{equation}
	\label{eq:embeddingrho3}
	\rho_{\lambda,0}^2(x) = \lim_{a \to 0^+}\rho_{\lambda,a}^2(x) = u_{\lambda,0}^{q-1}.
\end{equation}

To establish the desired results, that is the Theorem \ref{Main1.1}, note that since $({\textbf z}_{\lambda,a}) \in L^\infty(\Omega,\mathbb{R}^N)$ and satisfies $\|{\textbf z}_{\lambda,a}\|_\infty \leq 1$, then there exists ${\textbf z}_{\lambda,0} \in L^\infty(\Omega,\mathbb{R}^N)$, such that
$$
{\textbf z}_{\lambda,a} \overset{*}{\rightharpoonup} {\textbf z}_{\lambda,0}, \quad \mbox{in $L^\infty(\Omega,\mathbb{R}^N)$.}
$$
For any test function $\varphi \in C^\infty_c(\Omega)$, this weak-$*$ convergence implies
$$
\int_\Omega {\textbf z}_{\lambda,a} \cdot \nabla \varphi dx \to \int_\Omega {\textbf z}_{\lambda,0} \cdot \nabla \varphi dx, \quad \mbox{as} \ a \to 0^+.
$$
Thus, we deduce that
\begin{equation}
	\label{eq:divdistribution}
	\mbox{div}\, {\textbf z}_{\lambda,a} \to \mbox{div}\, {\textbf z}_{\lambda,0}, \quad \mbox{in $\mathcal{D}'(\Omega)$}.
\end{equation}

Combining \eqref{eq:divdistribution} with \eqref{eq:embeddingrho1}, we conclude that
\begin{equation}
	\label{}
	-\mbox{div}\, {\textbf z}_{\lambda,0} = \lambda \rho_{\lambda,0}^1 + |u_{\lambda,0}|^{q-1}\, \quad \mbox{in $\mathcal{D}'(\Omega)$}.
\end{equation}

Next, it remains to verify the following key identities:
$$
(\mathbf{z}_{\lambda,0}, Du_{\lambda,0}) = |Du_{\lambda,0}|, \quad \text{in } \mathcal{M}(\Omega)
$$
and
$$
-u_{\lambda,0} \left[ \mathbf{z}_{\lambda,0},\nu \right] = |u_{\lambda,0}|, \quad \mathcal{H}^{N-1}\text{-a.e. on } \partial \Omega.
$$
The above results can be shown using arguments similar to those in \cite[see Lemmas 4.2 and 4.3]{PSJ}. From everything that has been commented so far, Theorem \ref{Main1.1} is proven.

\subsection{A critical perturbation of a discontinuous nonlinearity}\label{Ap2}	

In this subsection, we investigate the existence of multiple solutions for the following class of problems:  
$$
\left\{
\begin{array}{rclcl}
	-\Delta_1 u &=& \lambda H(|u|-a) |u|^{q-2}u + |u|^{1^*-2}u, &\text{in}& \Omega;\\
	u &=& 0, & \text{on}  & \partial\Omega, \\
\end{array}
\right.\eqno{(Q_{\lambda,a})}
$$
where $\Omega \subset \mathbb{R}^N$ is a bounded domain with smooth boundary, $N \ge 2$, $a > 0$, $\lambda > 0$, $q \in (1, 1^*)$, and $H$ denotes the Heaviside function defined in \eqref{Heaviside}. 

Given the discontinuity of the nonlinearity at the points $\pm a$, we begin, as before, by revisiting the notion of a solution to $(Q_{\lambda,a})$ and briefly examining the nature of the discontinuous term. 

Define  
$$
F_a(s) \coloneqq \int_{0}^{s} H(|t|-a) |t|^{q-2}t \, dt.
$$
For a fixed $a > 0$, using the same approach as in the previous subsection, it can be shown that $F_a$ is locally Lipschitz on $\mathbb{R}$, and its subdifferential is given by  
\begin{equation} \label{GradFa}
	\partial F_a(t) = \left\{ \begin{array}{rlc}
		\{|t|^{q-2}t\},& \mbox{if} \; |t| >a;& \\
		\{0\},& \mbox{if} \; |t| <a;&\\
		\left[0,a^{q-1}\right],& \mbox{if} \; t=a;& \\
		\left[-a^{q-1},0\right],& \mbox{if} \; t=-a.&
	\end{array}
	\right.
\end{equation}
This satisfies the growth condition  
\begin{equation}\label{growthgradf}
	|s| \le |t|^{q-1}, \; \; \mbox{for any} \ s \in \partial F_a(t) \subset \mathbb{R}.
\end{equation}
Additionally, $F_a$ is explicitly given by  
	\begin{equation}\label{growthFs>a}
	F_a(s) = \left\{ \begin{array}{rlc}
		\dfrac{|s|^q}{q} - \dfrac{a^q}{q},& \mbox{if} \; |s| \ge a;& \\
		0,& \mbox{if} \; |s| <a,&
	\end{array} \right.
\end{equation}
and satisfies the bound  
\begin{equation}\label{growthF}
	|F_a(s)| \le \dfrac{1}{q}|s|^q, \; \; \mbox{for any} \ s \in \mathbb{R}.
\end{equation}

These properties provide a foundation for understanding the nonlinear term in $(Q_{\lambda,a})$ and will play a crucial role in our analysis of solution multiplicity. Now, we can define the sense of solution of $(Q_{\lambda,a})$, as follows:
\begin{definition}\label{defsol}
	We say that $u_{\lambda,a} \in BV(\Omega)$ is a solution of $(Q_{\lambda,a})$, if there exist ${\bf z} \in L^\infty(\Omega;\mathbb{R}^N)$, with $\|{\bf z}\|_\infty \le 1$, and $\rho \in L^{p}(\Omega)$, $p=q/(q-1)$, such that
	$$
	\left \{
	\begin{array}{c}\displaystyle
		-\int_\Omega u_{\lambda,a}\ {\rm div} \, {\bf z} \ dx = \int_\Omega |Du_{\lambda,a}| + \int_{\partial\Omega} |u_{\lambda,a}| d \mathcal{H}^{N-1};\\
		-{\rm div} \, {\bf z} = \lambda\rho + |u_{\lambda,a}|^{1^*-2}u_{\lambda,a}\in L^N(\Omega), \quad \mbox{a.e. in} \ \Omega,
	\end{array}
	\right.
	$$
	with $\rho \in \partial F_a(u_{\lambda,a})$, a.e. in $\Omega$.
\end{definition}

In order to apply Theorem \ref{SP1} and establish the existence of solutions to $(Q_{\lambda,a})$ we define, for any $\lambda, a > 0$, the functional $J_{\lambda,a} : L^{1^*}(\Omega) \to \mathbb{R}$ as  
$$
J_{\lambda,a}(u) = \int_\Omega |Du| + \int_{\partial\Omega} |u| \, d\mathcal{H}^{N-1} - \lambda \int_\Omega F_a(u) \, dx - \frac{1}{1^*} \int_\Omega |u|^{1^*} \, dx.
$$  

To simplify the notation, we introduce the following functionals $\Psi : L^{1^*}(\Omega) \to \mathbb{R}$ and $Q : L^{1^*}(\Omega) \to \mathbb{R}$:
$$\Psi(u)\coloneqq	\left\{ \begin{array}{clc} \displaystyle
	\int_\Omega |Du| + \int_{\partial\Omega} |u|d\mathcal{H}^{N-1},& \mbox{if} \; u \in BV(\Omega);& \\
	+\infty,& \mbox{if} \; u \in L^{1^*}(\Omega) \setminus BV(\Omega),&
\end{array}
\right.$$
which is convex and lower semicontinuous, and  
$$
Q(u) \coloneqq -\frac{1}{1^*} \int_\Omega |u|^{1^*} \, dx,
$$  
which belongs to the $C^1$ class.

Additionally, for any $a > 0$, we define the functionals $\mathcal{I}_a : L^q(\Omega) \to \mathbb{R}$ and $I_a : L^{1^*}(\Omega) \to \mathbb{R}$ as  
$$
\mathcal{I}_a(u) = \int_\Omega F_a(u) \, dx \quad \text{and} \quad I_a \equiv \mathcal{I}_a\big|_{L^{1^*}(\Omega)}.
$$  
By Lemma \ref{incgrad1}, we have the following result:  
\begin{lemma} \label{incgrad}
	For any $a > 0$, the functionals $\mathcal{I}_a$ and $I_a$ are locally Lipschitz in $L^q(\Omega)$ and $L^{1^*}(\Omega)$, respectively. Furthermore:  
	\begin{enumerate}[$i)$]
		\item $\partial I_a(u) \subseteq \partial \mathcal{I}_a(u)$ for any $u \in L^{1^*}(\Omega)$;  
		\item $\partial \mathcal{I}_a(u) \subseteq \partial F_a(u)$ for any $u \in L^q(\Omega)$.
	\end{enumerate}
\end{lemma}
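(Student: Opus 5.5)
The plan is in three steps: establish local Lipschitz continuity of $\mathcal{I}_a$ on $L^q(\Omega)$, obtain $i)$ from Lemma \ref{incgrad1}, and obtain $ii)$ from Clarke's theorem on integral functionals as used by Chang \cite{C}.

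\emph{Local Lipschitz continuity.} Since $F_a$ is locally Lipschitz and \eqref{growthgradf} holds, the mean value theorem for Clarke gradients gives, for $s,t\in\mathbb{R}$,
$$|F_a(s)-F_a(t)|\le \max\{|s|,|t|\}^{q-1}|s-t|\le (|s|^{q-1}+|t|^{q-1})|s-t|.$$
Integrating and applying Hölder with exponents $p=q/(q-1)$ and $q$ yields
$$|\mathcal{I}_a(u)-\mathcal{I}_a(v)|\le \big(\|u\|_q^{q-1}+\|v\|_q^{q-1}\big)\|u-v\|_q .$$
Hence $\mathcal{I}_a$ is Lipschitz on bounded subsets of $L^q(\Omega)$. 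Because $\Omega$ is bounded and $q<1^*$, the embedding $L^{1^*}(\Omega)\hookrightarrow L^q(\Omega)$ is continuous, so $\|\cdot\|_q\le C\|\cdot\|_{1^*}$. Therefore the restriction $I_a$ is locally Lipschitz on $L^{1^*}(\Omega)$.

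\emph{Item $i)$.} The space $L^{1^*}(\Omega)$ is dense in $L^q(\Omega)$, for instance because $C_c^\infty(\Omega)\subset L^{1^*}(\Omega)$. So this is exactly the setting of Lemma \ref{incgrad1}, with $Y=L^{1^*}$, $X=L^q$, $F=I_a$ and $\mathcal{F}=\mathcal{I}_a$. The inclusion should be read through the natural map $(L^q)^*=L^{p}\hookrightarrow (L^{1^*})^*$, which is injective by density. Concretely, for $v\in L^{1^*}$ we have
$$I_a^\circ(u;v)\le \mathcal{I}_a^\circ(u;v),$$
because the limsup defining $I_a^\circ$ runs over fewer points $w\to u$, and convergence in $L^{1^*}$ implies convergence in $L^q$. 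The Lipschitz bound then gives $\mathcal{I}_a^\circ(u;\cdot)$ continuous on $L^q$, and $L^{1^*}$ is dense. So an element $\xi\in\partial I_a(u)$ that is dominated by $\mathcal{I}_a^\circ(u;\cdot)$ on $L^{1^*}$ extends continuously to $L^q$ and lies in $\partial\mathcal{I}_a(u)$.

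\emph{Item $ii)$.} Here we must show that every $\xi\in\partial\mathcal{I}_a(u)\subset L^p(\Omega)$ satisfies $\xi(x)\in\partial F_a(u(x))$ for a.e. $x$. The plan is Clarke's argument. Using Fatou's lemma (reverse form) on the difference quotients, which are dominated by $(|w|+|v|)^{q-1}|v|$ and hence uniformly integrable along convergent sequences in $L^q$, we obtain
$$\mathcal{I}_a^\circ(u;v)\le \int_\Omega F_a^\circ(u(x);v(x))\,dx .$$
Then, for $\xi\in\partial\mathcal{I}_a(u)$, we have $\int_\Omega\xi v\le\int_\Omega F_a^\circ(u;v)$ for all $v\in L^q$. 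Taking $v=\pm\chi_E$ for arbitrary measurable $E$ gives, pointwise a.e.,
$$\xi(x)\le F_a^\circ(u(x);1)=\overline{g}_a(u(x)) \quad\text{and}\quad -\xi(x)\le F_a^\circ(u(x);-1)=-\underline{g}_a(u(x)).$$
This is exactly $\xi(x)\in\partial F_a(u(x))$ by \eqref{GradFa}.

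The main obstacle is justifying the Fatou step. The limsup in $\mathcal{I}_a^\circ$ is taken over $w\to u$ in norm, not pointwise. I would handle this by passing to a subsequence converging a.e. with an $L^q$ dominating function, so that reverse Fatou applies with the domination above. I would also use upper semicontinuity of $F_a^\circ$ (Proposition \ref{21}$ii)$) to bound the pointwise limsup by $F_a^\circ(u(x);v(x))$.
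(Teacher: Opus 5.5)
Your proposal is correct and follows the same route as the paper. Item $i)$ comes from Lemma \ref{incgrad1} applied to the dense, continuously embedded subspace $L^{1^*}(\Omega)\subset L^q(\Omega)$. Local Lipschitz continuity and item $ii)$ rest on Chang's description of generalized gradients of integral functionals, which the paper invokes only implicitly through \cite{C}.

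Your Hölder estimate, the extension-by-density argument for $i)$, and the reverse-Fatou bound $\mathcal{I}_a^\circ(u;v)\le\int_\Omega F_a^\circ(u(x);v(x))\,dx$ tested with $v=\pm\chi_E$ supply the details the paper leaves to the references. The appeal to upper semicontinuity in the last step is unnecessary: since $w_n(x)\to u(x)$ and $t_n\to0^+$ with the direction $v(x)$ fixed, the pointwise bound follows directly from the definition of $F_a^\circ$.
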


\begin{remark}
	The statement in item $ii)$ of Lemma \ref{incgrad} is a notational convenience. Specifically, this notation means that given $u \in L^q(\Omega)$ and $\xi \in \partial \mathcal{I}_a(u) \subset (L^q(\Omega))'$, there exists $\eta \in (L^q(\Omega))'$ such that  
	$$
	\langle \xi, v \rangle = \int_\Omega \eta v \, dx, \quad \text{for all } v \in (L^q(\Omega))',
	$$  
	and $\eta \in \partial F_a(u)$ almost everywhere in $\Omega$.
\end{remark}

Thus, we can rewrite the functional $J_{\lambda,a}$ as  
$$
J_{\lambda,a}(u) = \Psi(u) + \Phi_{\lambda,a}(u),
$$  
where $\Phi_{\lambda,a}(u) = Q(u) - \lambda I_a(u)$. This formulation implies that $J_{\lambda,a}$ satisfies property $(H_1)$.

Now we will prove that $J_{\lambda,a}$ works like the Euler-Lagrange functional associated to $(Q_{\lambda,a})$, i.e., any critical point of $J_{\lambda,a}$ is a solution of $(Q_{\lambda,a})$.

\begin{proposition}
	Assume that $u \in BV(\Omega)$ is a nontrivial critical point of $J_{\lambda,a}$. Then, $u$ is a solution of $(Q_{\lambda,a})$ in the sense of Definition \ref{defsol}.
\end{proposition}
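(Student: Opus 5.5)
The plan is to follow the proof of Lemma \ref{energyfunc}, now working in $X=L^{1^*}(\Omega)$ with $X^*=L^{N}(\Omega)$ (since $(1^*)'=N$). Since $u$ is a critical point of $J_{\lambda,a}=\Phi_{\lambda,a}+\Psi$, Definition \ref{cp} gives
$$0\in \partial\Phi_{\lambda,a}(u)+\partial\Psi(u).$$
By Proposition \ref{21} $iii)$–$iv)$, $\partial\Phi_{\lambda,a}(u)\subset \{Q'(u)\}-\lambda\,\partial I_a(u)$, where $Q'(u)=-|u|^{1^*-2}u\in L^N(\Omega)$. Hence there exist $\xi\in\partial I_a(u)$ and $w\in\partial\Psi(u)\subset L^N(\Omega)$ such that
$$w=\lambda\xi+|u|^{1^*-2}u .$$

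Next I identify $\xi$ with a pointwise selection. By Lemma \ref{incgrad} $i)$, $\xi\in\partial\mathcal I_a(u)$, where $\mathcal I_a$ is defined on $L^q(\Omega)$; here we use $u\in L^{1^*}\subset L^q$ and density. Lemma \ref{incgrad} $ii)$ and the subsequent remark then give $\rho\in L^p(\Omega)$, $p=q/(q-1)$, with $\rho(x)\in\partial F_a(u(x))$ a.e. and $\langle\xi,v\rangle=\int_\Omega\rho v\,dx$ for $v\in L^{1^*}$. Thus, a.e. in $\Omega$,
$$w=\lambda\rho+|u|^{1^*-2}u .$$
The membership in $L^N(\Omega)$ comes from $w\in L^N$. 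Alternatively it can be checked directly: by \eqref{growthgradf}, $|\rho|\le|u|^{q-1}\in L^{1^*/(q-1)}\subset L^N$ because $q<1^*$, and $|u|^{1^*-1}\in L^N$.

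Finally I apply Lemma \ref{realiza}. That lemma was stated for $\Psi$ on $L^q$, but its proof, from \cite{ABS}, carries over to $\Psi$ on $L^{1^*}$ with subgradients in $L^N$. It yields ${\bf z}\in L^\infty(\Omega;\mathbb R^N)$ with $\|{\bf z}\|_\infty\le1$, $-\int_\Omega u\,{\rm div}\,{\bf z}\,dx=\int_\Omega|Du|+\int_{\partial\Omega}|u|\,d\mathcal H^{N-1}$, and $-{\rm div}\,{\bf z}=w$. This is exactly Definition \ref{defsol}.

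The main obstacle is this last transfer. One must check that the Anzellotti-type representation of $\partial\Psi(u)$ holds in the $L^{1^*}$–$L^N$ duality. The pairing $(\mathbf z,Du)$ requires ${\rm div}\,\mathbf z\in L^N$, which is the natural setting here, so it is the cleanest case. The rest is bookkeeping: the sum rule for Clarke gradients, the passage between $L^{1^*}$ and $L^q$ gradients, and the $L^N$ integrability.
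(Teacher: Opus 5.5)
Your proposal is correct and follows essentially the same route as the paper: split $0\in\partial\Phi_{\lambda,a}(u)+\partial\Psi(u)$ into a subgradient $w\in\partial\Psi(u)$ and an element of $\partial\Phi_{\lambda,a}(u)$, use Lemma \ref{incgrad} to represent the $\partial I_a(u)$ component by a pointwise selection $\rho\in\partial F_a(u)$, and invoke Lemma \ref{realiza} to produce ${\bf z}$. You are more careful than the paper about the sum rule and sign, the $L^N$ integrability of $\lambda\rho+|u|^{1^*-2}u$, and the fact that Lemma \ref{realiza} is being used in the $L^{1^*}$--$L^N$ duality, which the paper does without comment.
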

\begin{proof}
	Let $u \in BV(\Omega)$ be a critical point of $J_{\lambda,a}$. Then, Definition \ref{cp} item $i)$ implies the existence of $w \in \partial\Psi(u)\subseteq L^N(\Omega)$ and $\xi\in\partial\Phi_{\lambda,a}(u)$ such that
	\begin{equation}\label{a}
		-\xi = w, \; \; \mbox{in} \; L^N(\Omega).
	\end{equation}
	Furthermore, by Lemma \ref{realiza}, there exists ${\bf z} \in L^\infty(\Omega,\mathbb{R}^N)$, with $\|{\bf z}\|_\infty \le 1$, satisfying
	\begin{equation}\label{b}
		\left \{
		\begin{array}{c}
			-\mbox{div}\, {\bf z} = w,\\
			\displaystyle \int_\Omega u w dx = \int_\Omega |Du| + \int_{\partial\Omega} |u| d \mathcal{H}^{N-1}.
		\end{array}
		\right.
	\end{equation}
	Lastly, by definition of $I_a$ and Lemma \ref{incgrad}, there exists $\rho \in L^{p}(\Omega)$ such that
	$$-\xi = \lambda\rho + |u|^{1^*-2}u,$$
	with $\rho \in \partial F_a(u)$, almost everywhere in $\Omega$. This last equality combined with \eqref{a} and \eqref{b} leads to
	$$-\mbox{div} \, {\bf z} = \lambda\rho + |u|^{1^*-2}u,\,\,\,\text{in}\,\,\,L^N(\Omega)$$
	and this complete the proof.
\end{proof}

Our goal now is to prove that $J_{\lambda,a}$ satisfies the $(\rm PS)$ condition at a controlled levels. To achieve this, we will introduce a series of lemmas that will help in this proof. To begin, let us define two auxiliary functions. For any $k>0$, we define $T_k,R_k : \mathbb{R} \to \mathbb{R}$ as follows:
$$T_k(s) = \min\big\{\max\{s,-k\},k\big\} \quad \mbox{and} \quad R_k(s) = s - T_k(s).$$
It is well known, by simple computation, that for any $u \in L^{1^*}(\Omega)$
$$T_k(u) \to u, \; \; \mbox{in} \  L^{1^*}(\Omega) \;\; \mbox{as} \ k \to \infty, \eqno{(T_1)}$$
and so, by definition of $R_k$
$$R_k(u) \to 0, \; \; \mbox{in} \  L^{1^*}(\Omega) \;\; \mbox{as} \ k \to \infty. \eqno{(R_1)}$$
Moreover, if $(u_n) \subset L^{1^*}(\Omega)$ is a sequence satisfying
$$u_n(x) \to u(x), \; \; \mbox{a.e. in} \ \Omega,$$
then, for each fixed $k>0$, by Lebesgue's Theorem	
$$T_k(u_n) \to T_k(u), \; \; \mbox{in} \  L^{1^*}(\Omega) \;\; \mbox{as} \ n \to \infty. \eqno{(T_2)}$$
Furthermore,
$$\Psi(u) = \Psi\big(T_k(u)\big) + \Psi\big(R_k(u)\big). \eqno{(R_2)}$$
Using this notation Degiovanni and Magrone \cite[Proposition 3.2]{DM} established the following lemma:
\begin{lemma}\label{DiMa}
	Let $(u_n)$ be a sequence in $BV(\Omega)$ and $(w_n)$ be a sequence in $L^N(\Omega)$ such that, for any $n \in \mathbb{N}$, $w_n \in \partial\Psi(u)$ and
	$$u_n \rightharpoonup u \quad \mbox{in} \ L^{1^*}(\Omega),$$
	$$w_n \rightharpoonup w \quad \mbox{in} \ L^{N}(\Omega),$$
	as $n \to \infty$. Then, $u \in BV(\Omega)$ and $w \in \partial\Psi(u)$.
\end{lemma}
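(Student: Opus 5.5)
The plan is to use the positive $1$-homogeneity of $\Psi$. This lets us replace membership in $\partial\Psi$ by one inequality that is linear in $w$, plus one identity. The identity is the only place where the product of two weakly convergent sequences appears, and truncation handles it.

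First, since $\Psi$ is convex, positively $1$-homogeneous and $\Psi(0)=0$, I would show that $\xi\in\partial\Psi(v)$ if and only if
\begin{equation*}
\Psi(\varphi)\ge\int_\Omega \xi\varphi\,dx\ \ \forall\varphi\in L^{1^*}(\Omega), \qquad \int_\Omega \xi v\,dx=\Psi(v).
\end{equation*}
To see this, test the subdifferential inequality with $\varphi=tv$ for $t=0$ and $t=2$ to obtain the identity. Then use homogeneity with $\varphi=v+s\phi$, $s\to\infty$, to obtain the inequality. The converse is immediate. I read the hypothesis as $w_n\in\partial\Psi(u_n)$, since $w_n\in\partial\Psi(u)$ is presumably a typo.

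The inequality passes to the limit directly: for fixed $\varphi$, $\int w_n\varphi\to\int w\varphi$, so $\Psi(\varphi)\ge\int w\varphi$ for all $\varphi$. Moreover,
\begin{equation*}
\Psi(u_n)=\int_\Omega w_nu_n\,dx\le \|w_n\|_N\|u_n\|_{1^*}\le C
\end{equation*}
by the boundedness of weakly convergent sequences. Hence $(u_n)$ is bounded in $BV(\Omega)$. Weak lower semicontinuity of the convex l.s.c. functional $\Psi$ gives $\Psi(u)<\infty$, so $u\in BV(\Omega)$. By the compact embedding, up to a subsequence $u_n\to u$ a.e. in $\Omega$. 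Since $\Psi(u)\ge\int wu$ already holds, it remains to prove $\Psi(u)\le\int_\Omega wu\,dx$.

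This last inequality is the main obstacle, because $\int w_nu_n$ is a product of two sequences that converge only weakly. I would handle it with the truncations $T_k$ and $R_k$. By $(R_2)$, $\Psi(u_n)=\Psi(T_k(u_n))+\Psi(R_k(u_n))$ and $u_n=T_k(u_n)+R_k(u_n)$. Combined with $\int w_nT_k(u_n)\le\Psi(T_k(u_n))$ and $\int w_nR_k(u_n)\le\Psi(R_k(u_n))$, whose sum is an equality, this forces
\begin{equation*}
\int_\Omega w_nT_k(u_n)\,dx=\Psi(T_k(u_n)).
\end{equation*}
By $(T_2)$, $T_k(u_n)\to T_k(u)$ strongly in $L^{1^*}(\Omega)$, while $w_n\rightharpoonup w$ in $L^N(\Omega)$. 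Therefore
\begin{equation*}
\Psi(T_k(u))\le\liminf_n\Psi(T_k(u_n))=\lim_n\int_\Omega w_nT_k(u_n)\,dx=\int_\Omega wT_k(u)\,dx.
\end{equation*}
Finally let $k\to\infty$. By $(T_1)$, $T_k(u)\to u$ in $L^{1^*}(\Omega)$, so lower semicontinuity of $\Psi$ gives
\begin{equation*}
\Psi(u)\le\liminf_k\Psi(T_k(u))\le\lim_k\int_\Omega wT_k(u)\,dx=\int_\Omega wu\,dx.
\end{equation*}
This gives equality in the identity, and by the first step $w\in\partial\Psi(u)$.
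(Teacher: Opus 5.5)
Your argument is correct, including your reading of the hypothesis as $w_n\in\partial\Psi(u_n)$, which is how the lemma is used in Lemma~\ref{auxlemma}. The paper gives no proof of its own: it defers to Degiovanni--Magrone and sets up the truncation properties $(T_1)$, $(T_2)$, $(R_2)$ immediately beforehand for this purpose, and your route is essentially that argument. You use the $1$-homogeneous characterization of $\partial\Psi$, then the splitting $\Psi(u_n)=\Psi(T_k(u_n))+\Psi(R_k(u_n))$ to force $\int_\Omega w_nT_k(u_n)\,dx=\Psi(T_k(u_n))$, which upgrades the weak--weak product to a weak--strong one.
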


\begin{lemma}\label{auxlemma}
	Let $(u_n)$ be a $(\rm PS)$ sequence for $J_{\lambda,a}$. Then, $(u_n)$ is bounded in $BV(\Omega)$ and
	$$u_n \rightharpoonup u \; \mbox{in} \; L^{1^*}(\Omega).$$
	Furthermore,
	$$\lim_{n \to \infty} \Big(\Psi(u_n) - \|u_n\|_{1^*}^{1^*}\Big) = \Psi(u) - \|u\|_{1^*}^{1^*}, \leqno{(a)}$$
	and, for any fixed $k>0$,
	$$\limsup_{n \to \infty} \Big(\Psi(R_k(u_n)) - \|R_k(u_n)\|_{1^*}^{1^*}\Big) \le \Psi(R_k(u)) - \|R_k(u)\|_{1^*}^{1^*}.\leqno{(b)}$$
\end{lemma}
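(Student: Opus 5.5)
The plan has four steps: boundedness in $BV(\Omega)$, identification of the limit via Lemma \ref{DiMa}, the energy limit $(a)$ by a Brezis--Lieb-type comparison, and the truncated version $(b)$ by splitting at level $k$.

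\textbf{Boundedness.} By Lemma \ref{caracPS} there are $w_n\to0$ in $L^N(\Omega)$ (the dual of $L^{1^*}$), $\rho_n\in\partial F_a(u_n)$ a.e., and $v_n\in\partial\Psi(u_n)$ with
\[
w_n = v_n - \lambda\rho_n - |u_n|^{1^*-2}u_n ,
\]
where Lemma \ref{incgrad} is used to represent $\partial\Phi_{\lambda,a}$. By Lemma \ref{realiza}, $\int_\Omega v_nu_n\,dx=\Psi(u_n)$. Hence
\[
\langle w_n,u_n\rangle=\Psi(u_n)-\lambda\int_\Omega\rho_nu_n\,dx-\|u_n\|_{1^*}^{1^*}.
\]
Next I form $J_{\lambda,a}(u_n)-\frac1q\langle w_n,u_n\rangle$. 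The $\Psi$-term gives $(1-\frac1q)\Psi(u_n)$, and the critical term gives $(\frac1q-\frac1{1^*})\|u_n\|_{1^*}^{1^*}\ge0$ since $q<1^*$. The discontinuous part $\lambda\int_\Omega(\frac1q\rho_nu_n-F_a(u_n))\,dx$ is bounded by $C a^q|\Omega|$, exactly as in \eqref{>a}–\eqref{<a}. This yields
\[
c+1+o_n(1)\|u_n\|\ge (1-\tfrac1q)\|u_n\|_{BV}-C .
\]
The error term $o_n(1)\|u_n\|_{1^*}\le o_n(1)\|u_n\|_{BV}$ is absorbed. So $(u_n)$ is bounded in $BV(\Omega)$, hence in $L^{1^*}$, and up to a subsequence $u_n\rightharpoonup u$ in $L^{1^*}$, $u_n\to u$ in $L^r$ for $r<1^*$, and a.e.

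\textbf{Identifying the limit.} $\rho_n$ is bounded in $L^p$ by \eqref{growthgradf} and converges strongly in $L^{1^*/(q-1)}$-type spaces to some $\rho$ with $\rho\in\partial F_a(u)$, using a.e. convergence and the closedness of the graph of $\partial F_a$. Also $|u_n|^{1^*-2}u_n\rightharpoonup|u|^{1^*-2}u$ in $L^N$. Therefore $v_n\rightharpoonup v=\lambda\rho+|u|^{1^*-2}u$ in $L^N$, and Lemma \ref{DiMa} gives $u\in BV(\Omega)$ and $v\in\partial\Psi(u)$, so $\int_\Omega vu\,dx=\Psi(u)$.

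\textbf{Proof of $(a)$.} From the identity above,
\[
\Psi(u_n)-\|u_n\|_{1^*}^{1^*}=\langle w_n,u_n\rangle+\lambda\int_\Omega\rho_nu_n\,dx .
\]
The first term is $o(1)$ by boundedness. The second converges to $\lambda\int_\Omega\rho u\,dx$ by compactness of $BV\hookrightarrow L^q$ and weak convergence of $\rho_n$ in $L^p$. On the other hand,
\[
\Psi(u)-\|u\|_{1^*}^{1^*}=\int_\Omega vu\,dx-\|u\|_{1^*}^{1^*}=\lambda\int_\Omega\rho u\,dx ,
\]
which gives $(a)$.

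\textbf{Proof of $(b)$.} I test the subdifferential inequality for $v_n$ against $T_k(u_n)+R_k(u)$. This gives
\[
\Psi(T_k(u_n)+R_k(u))\ge\Psi(u_n)+\int_\Omega v_n\big(R_k(u)-R_k(u_n)\big)\,dx .
\]
Using $(R_2)$ on both sides (the truncation splitting is valid since $T_k(u_n)$ and $R_k(u)$ have compatible level structure; if not, I would instead test with $u_n-R_k(u_n)+R_k(u)$ and use convexity plus lower semicontinuity) leaves $\Psi(R_k(u))-\Psi(R_k(u_n))\ge\int_\Omega v_n(R_k(u)-R_k(u_n))\,dx$. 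Substituting $v_n=w_n+\lambda\rho_n+|u_n|^{1^*-2}u_n$, the $w_n$ and $\rho_n$ terms vanish in the limit by compactness. The critical term $\int_\Omega|u_n|^{1^*-2}u_n(R_k(u)-R_k(u_n))\,dx$ is the delicate step. On $\{|u_n|>k\}$ one has $|u_n|^{1^*-2}u_nR_k(u_n)\ge|R_k(u_n)|^{1^*}$, and $|u_n|^{1^*-2}u_n\rightharpoonup|u|^{1^*-2}u$ weakly. Together with the Brezis–Lieb lemma for $R_k(u_n)$ (a.e. convergent and bounded), these should give the limsup inequality $(b)$. I expect this final comparison of critical terms to be the main obstacle; I would handle it via the pointwise inequality $|s|^{1^*-2}sR_k(s)\ge|R_k(s)|^{1^*}+k|R_k(s)|^{1^*-1}$ and Fatou's lemma.
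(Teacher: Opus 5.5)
Your boundedness step and your proof of $(a)$ are essentially the paper's argument: the identity from Lemma \ref{realiza}, the $\frac1q$-combination, Lemma \ref{DiMa} applied to $v_n=\lambda\rho_n+|u_n|^{1^*-2}u_n+w_n$, and $\int_\Omega\rho_nu_n\,dx\to\int_\Omega\rho u\,dx$. One correction there: drop the claim that $\rho_n$ converges strongly. On $\{|u|=a\}$, which may have positive measure, $\rho_n(x)$ can oscillate between $0$ and roughly $a^{q-1}$. Weak convergence of a subsequence is all you actually use.

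For $(b)$ the paper just invokes \cite[Lemma 5.1]{DM}. Your self-contained route is sound up to
\[
\Psi(R_k(u_n))\le\Psi(R_k(u))+\int_\Omega|u_n|^{1^*-2}u_n\big(R_k(u_n)-R_k(u)\big)\,dx+o_n(1),
\]
provided you justify the left side with the triangle inequality $\Psi(T_k(u_n)+R_k(u))\le\Psi(T_k(u_n))+\Psi(R_k(u))$. You cannot use $(R_2)$ there, because it does not apply to $T_k(u_n)+R_k(u)$: where $|u_n|<k<|u|$, the truncation of this sum is not $T_k(u_n)$.

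The genuine gap is the last step, which you flagged yourself. To reach $(b)$ you need $\int_\Omega|u_n|^{1^*-2}u_nR_k(u_n)\,dx$ bounded \emph{above} by $\|R_k(u_n)\|_{1^*}^{1^*}$ plus terms that converge. A pointwise lower bound combined with Fatou controls the wrong side. Moreover, your proposed inequality is false for $N\ge3$. For $|s|>k$ it is equivalent to $|s|^{1^*-2}\ge(|s|-k)^{1^*-2}$, which fails since $1^*-2<0$; for $N=3$, $k=1$, $s=2$ it reads $\sqrt2\ge2$.

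The missing observation is that the discrepancy is of lower order. Set $h_k(s):=|s|^{1^*-2}sR_k(s)-|R_k(s)|^{1^*}$. Then $h_k(s)=(|s|-k)\big(|s|^{1^*-1}-(|s|-k)^{1^*-1}\big)$ for $|s|>k$ and $h_k=0$ otherwise. By subadditivity of $t\mapsto t^{1^*-1}$ (recall $1^*-1\le1$), this gives $0\le h_k(s)\le k^{1^*-1}(|s|-k)^+$. Since $u_n\to u$ in $L^1(\Omega)$ and a.e., it follows that $\int_\Omega h_k(u_n)\,dx\to\int_\Omega h_k(u)\,dx$. Combining this with $|u_n|^{1^*-2}u_n\rightharpoonup|u|^{1^*-2}u$ in $L^N(\Omega)$, tested against $R_k(u)$, you get
\[
\int_\Omega|u_n|^{1^*-2}u_n\big(R_k(u_n)-R_k(u)\big)\,dx=\|R_k(u_n)\|_{1^*}^{1^*}-\|R_k(u)\|_{1^*}^{1^*}+o_n(1).
\]
Inserting this in the previous display gives $(b)$ at once, with no Brezis--Lieb argument needed.
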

\begin{proof}
	Let us start by proving that any $(\rm PS)$ sequence is bounded in $BV(\Omega)$. Let $(u_n)$ be a $(\rm PS)_d$ sequence of $J_{\lambda,a}$. Thus, $J_{\lambda,a}(u_n) \to d,$	and similar to \eqref{***} we get $\phi_n \in L^N(\Omega)$ with $\phi_n = o_n(1) \; \; \mbox{in} \ L^N(\Omega),$
	and
	$$\int_\Omega |D u_n| + \int_{\partial\Omega} |u_n| d \mathcal{H}^{N-1} = \lambda \int_\Omega \rho_n u_n dx + \int_\Omega |u_n|^{1^*}dx + \int_\Omega \phi_n u_n dx.$$
	In this case, defining
	$$A(u_n) \coloneqq \int_\Omega |D u_n| + \int_{\partial\Omega} |u_n| d \mathcal{H}^{N-1} - \left( \lambda \int_\Omega \rho_n u_n dx + \int_\Omega |u_n|^{1^*}dx + \int_\Omega \phi_n u_n dx\right),$$
	it follows that $A(u_n) = 0$ for any $n \in \mathbb{N}$. Thus,
	\begin{eqnarray}\label{Ilad}
		d + o_n(1) &=& J_{\lambda,a}(u_n) - \dfrac{1}{q} A(u_n) \nonumber \\
		&=& \left(1-\dfrac{1}{q}\right)\Psi(u_n) + \dfrac{\lambda}{q} \int_\Omega\big[\rho_nu_n- qF_a(u_n)\big] dx + \left(\dfrac{1}{q} - \dfrac{1}{1^*}\right)\|u_n\|_{1^*}^{1^*} + \dfrac{1}{q}\int_\Omega \phi_n u_ndx. \nonumber \\
	\end{eqnarray}
	Similarly to \eqref{lim5}, by using \eqref{GradFa}--\eqref{growthFs>a} we get
	\begin{equation}\label{rFge}
		\dfrac{\lambda}{q}\int_\Omega \big[\rho_nu_n- qF_a(u_n)\big] dx \ge -\dfrac{\lambda}{q} a^q |\Omega| \eqqcolon C_0.
	\end{equation}
	
	
	
	
	\noindent	By combining \eqref{Ilad} and \eqref{rFge} we obtain
	$$d+o_n(1) \ge \left(1-\dfrac{1}{q}\right)\Psi(u_n) -C_0 + \left(\dfrac{1}{q} - \dfrac{1}{1^*}\right)\|u_n\|_{1^*}^{1^*} - \dfrac{1}{q} \|\phi_n\|_N \|u_n\|_{1^*}.$$
	We can choose $n$ sufficiently large such that $\|\phi_n\|_N \leq (1 - q/1^*)$, and then
	$$d+o_n(1) \ge \left(1-\dfrac{1}{q}\right)\Psi(u_n) -C_0 + \left(\dfrac{1}{q} - \dfrac{1}{1^*}\right)\Big(\|u_n\|_{1^*}^{1^*} - \|u_n\|_{1^*}\Big),$$
	for $n$ large enough. Since the function $\zeta : [0,+\infty) \to \mathbb{R}$ defined by $\zeta(t) = t^{1^*} - t$ is bounded from below, i.e., there exists $K>0$ such that $\zeta(t) \ge -K$, for any $t \in [0,+\infty)$,	we conclude that
	$$d+o_n(1) \ge \left(1-\dfrac{1}{q}\right)\Psi(u_n) -C_0 - \left(\dfrac{1}{q} - \dfrac{1}{1^*}\right)K,$$
	which proves that $(u_n)$ is bounded in $BV(\Omega)$, as desired.

	We now proceed to establish item $(a)$ in the second part of the lemma. Since $(u_n)$ is a $(\rm PS)$ sequence, by Lemma \ref{caracPS} there exists $(\phi_n) \subset L^N(\Omega)$ with $\phi_n = o_n(1)$ and $\phi_n \in \partial \Psi(u_n) + \partial\Phi_{\lambda,a}(u_n)$.	So, by definition of $\Phi_{\lambda,a}$, we find $\rho_n \in \partial I_a(u_n) \subset L^N(\Omega)$ such that
		$$\lambda\rho_n + |u_n|^{1^*-2}u_n + \phi_n \in \partial\Psi(u_n),$$
		i.e., there exists $w_n \in \partial\Psi(u_n)$ such that
		\begin{equation}\label{*}
			w_n = \lambda\rho_n + |u_n|^{1^*-2}u_n + \phi_n.
		\end{equation}
		Moreover, by Lemma \ref{realiza}
		\begin{equation}\label{**}
			\int_\Omega w_n u_n dx = \int_\Omega |D u_n| + \int_{\partial\Omega} |u_n| d \mathcal{H}^{N-1}.
		\end{equation}
		By combining \eqref{*} and \eqref{**}, we get
		\begin{equation}\label{***}
			\int_\Omega |D u_n| + \int_{\partial\Omega} |u_n| d \mathcal{H}^{N-1} = \lambda \int_\Omega \rho_n u_n dx + \int_\Omega |u_n|^{1^*}dx + \int_\Omega \phi_n u_n dx.
	\end{equation}
	
	Since the sequence $(u_n)$ is bounded in $L^{1^*}(\Omega)$, Hölder's inequality ensures that $(|u_n|^{1^*-2}u_n)$ is also bounded in $L^N(\Omega)$ (see, for instance, \cite[Lemma 3.4]{AOP}). Furthermore, we claim that the sequence $(\rho_n) \subset \partial I_a(u_n)$ is bounded in $L^N(\Omega)$. 
	
	Indeed, by Lemma \ref{incgrad}, we have $\partial I_a(u_n) \subset \partial F_a(u_n)$, and from \eqref{growthgradf}, it follows that $|\rho_n| \le |u_n|^{q-1}$ almost everywhere in $\Omega$. Thus, applying Hölder's inequality, we obtain  
	$$
	\|\rho_n\|_N^N \le \left(\int_\Omega |u_n|^{N/(N-1)} dx\right)^{(N-1)(q-1)} |\Omega|^{1-(N-1)(q-1)},
	$$  
	which confirms the boundedness of $(\rho_n)$ in $L^N(\Omega)$, as required. 
	
	Given this boundedness and the reflexivity of $L^N(\Omega)$, there exist $u \in L^{1^*}(\Omega)$ and $\rho \in L^N(\Omega)$ such that  
	\begin{equation}\label{Rho}
	|u_n|^{1^*-2}u_n \rightharpoonup |u|^{1^*-2}u \quad \text{and} \quad \rho_n \rightharpoonup \rho,
	\end{equation}  
	both in $L^N(\Omega)$. Defining $w = \lambda \rho + |u|^{1^*-2}u$, these weak convergences imply $w_n \rightharpoonup w$ in $L^N(\Omega)$. 
	
	By hypothesis, $u_n \rightharpoonup u$ in $L^{1^*}(\Omega)$. Combining this with the $w_n$ convergence and Lemma \ref{DiMa}, we deduce that $u \in BV(\Omega)$ and  
	$$
	\lambda \rho + |u|^{1^*-2}u \in \partial\Psi(u).
	$$
	Thus, by Lemma \ref{realiza}, we have  
	\begin{equation}\label{eqnorm}
		\int_\Omega |Du| + \int_{\partial\Omega} |u| d \mathcal{H}^{N-1} = \lambda \int_\Omega \rho u \ dx + \int_\Omega |u|^{1^*}dx.
	\end{equation}
	
	On the other hand, by the compact embedding, $u_n \to u$ in $L^q(\Omega)$. Furthermore, by Lemma \ref{incgrad}, $(\rho_n) \subset \partial \mathcal{I}_a(u_n) \subset L^p(\Omega)$. Thus, applying \cite[Proposition 4]{SPS}, we infer that  
	$$
	\rho_n \rightharpoonup \widetilde{\rho} \quad \text{in } L^p(\Omega),
	$$  
	where $\widetilde{\rho} \in \partial \mathcal{I}_a(u)$. We claim that $\rho = \widetilde{\rho}$ a.e. in $\Omega.$	To prove this, note that the weak convergence of $\rho_n$ and \eqref{Rho} imply that  
	$$
	\int_\Omega \rho \varphi \ dx = \int_\Omega \widetilde{\rho} \varphi \ dx, \quad \forall \varphi \in C_0^\infty(\Omega),
	$$  
	which establishes the equality $\rho = \widetilde{\rho}$ almost everywhere. Consequently, we conclude that  
	\begin{equation}\label{convrn}
		\lim_{n \to \infty} \int_\Omega \rho_nu_n \ dx = \int_\Omega \rho u \ dx.
	\end{equation}
	
	Finally, combining \eqref{***}, \eqref{eqnorm}, and \eqref{convrn}, we obtain  
	\begin{align*}
		\lim_{n \to \infty} \Big(\Psi(u_n) - \|u_n\|_{1^*}^{1^*}\Big) &= \lim_{n \to \infty} \left(\lambda \int_\Omega \rho_n u_n \, dx + \int_\Omega \phi_n u_n \, dx\right) \\
		&= \lambda \int_\Omega \rho u \, dx \\
		&= \Big(\Psi(u) - \|u\|_{1^*}^{1^*}\Big),
	\end{align*}  
	which completes the proof of item $(a)$. The proof of item $(b)$ follows similarly to \cite[Lemma 5.1]{DM}.
\end{proof}

Now we are able to prove that $J_{\lambda,a}$ satisfies the $(\rm PS)$ condition. To do this, define
$$S = \inf_{\parbox{1.8cm}{\scriptsize\centering $u \in BV(\Omega)$  $u \neq 0$}} \dfrac{\displaystyle \int_\Omega |Du|}{\|u\|_{1^*}}.$$
\begin{lemma}\label{PScSN}
	For each $\lambda>0$, the functional $J_{\lambda,a}$ satisfies the $(\rm PS)_c$ condition for any
	$$c < \dfrac{1}{N} S^N.$$
\end{lemma}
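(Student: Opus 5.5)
Let $(u_n)$ be a $(\rm PS)_c$ sequence for $J_{\lambda,a}$ with $c<\frac1N S^N$. The plan is to follow the Degiovanni--Magrone truncation strategy, using Lemma \ref{auxlemma} as the main input. By that lemma, $(u_n)$ is bounded in $BV(\Omega)$, and up to a subsequence $u_n\rightharpoonup u$ in $L^{1^*}(\Omega)$ with $u\in BV(\Omega)$. By the compact embedding we also have $u_n\to u$ in $L^r(\Omega)$ for $r<1^*$ and a.e. in $\Omega$. Since $\lambda\int_\Omega F_a(u_n)\,dx\to\lambda\int_\Omega F_a(u)\,dx$ (using $|F_a(s)|\le |s|^q/q$ and $q<1^*$), the only possible loss of compactness is in the pair $\Psi(u_n)$, $\|u_n\|_{1^*}^{1^*}$. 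The goal is therefore to show $\|u_n-u\|_{1^*}\to0$.

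\textbf{Step 1: reduce to the tails $R_k(u_n)$.} Fix $k>0$ and write $u_n=T_k(u_n)+R_k(u_n)$. By $(T_2)$, $T_k(u_n)\to T_k(u)$ in $L^{1^*}(\Omega)$, and by $(R_1)$, $\|R_k(u)\|_{1^*}\to0$ as $k\to\infty$. Hence it suffices to prove
$$\limsup_{k\to\infty}\limsup_{n\to\infty}\|R_k(u_n)\|_{1^*}=0.$$

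\textbf{Step 2: the dichotomy for $\ell_k:=\limsup_n\|R_k(u_n)\|_{1^*}^{1^*}$.} From item $(b)$ of Lemma \ref{auxlemma} and the Sobolev-type inequality $\Psi(v)\ge S\|v\|_{1^*}$ (valid on $BV(\Omega)$ with the trace term, by extension by zero), we get
$$S\|R_k(u_n)\|_{1^*}-\|R_k(u_n)\|_{1^*}^{1^*}\le \Psi(R_k(u))-\|R_k(u)\|_{1^*}^{1^*}+o_n(1),$$
and the right-hand side is $o_k(1)$. Along a subsequence with $t_n=\|R_k(u_n)\|_{1^*}$, this forces, in the limit $k\to\infty$, either $t_n\to0$ or $\liminf t_n\ge S^{1/(1^*-1)}=S^{N-1}$. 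In the second case, $\liminf\|R_k(u_n)\|_{1^*}^{1^*}\ge S^N$ asymptotically.

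\textbf{Step 3: rule out concentration via the energy level (main obstacle).} Suppose concentration occurs. We would compute $c=\lim J_{\lambda,a}(u_n)$ using item $(a)$ of Lemma \ref{auxlemma} together with the splitting $(R_2)$ and the almost-orthogonality $\|u_n\|_{1^*}^{1^*}=\|T_k(u_n)\|_{1^*}^{1^*}+\|R_k(u_n)\|_{1^*}^{1^*}$ (up to $k$-dependent errors on the level set $[|u_n|>k]$). This should yield
$$c\ \ge\ J_{\lambda,a}(u)+\Big(1-\tfrac1{1^*}\Big)\liminf\|R_k(u_n)\|_{1^*}^{1^*}-o_k(1),$$
where we exploit the fact that the tail part satisfies $\Psi\approx\|\cdot\|_{1^*}^{1^*}$ asymptotically (this follows from (a) and (b) combined).

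Next, $J_{\lambda,a}(u)\ge0$, because $u$ is a critical point of $J_{\lambda,a}$: from (a) and $w\in\partial\Psi(u)$ we obtain \eqref{eqnorm}. Substituting,
$$J_{\lambda,a}(u)=\tfrac1N\|u\|_{1^*}^{1^*}+\lambda\int_\Omega\big(\rho u-F_a(u)\big)\,dx,$$
and $\rho u-F_a(u)\ge0$ pointwise by \eqref{GradFa}--\eqref{growthFs>a}. Therefore $c\ge\frac1N S^N$, contradicting the assumption $c<\frac1N S^N$.

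The delicate points are the exact accounting of $\Psi$ across the truncation and the pointwise sign of $\rho u-F_a(u)$ near $|u|=a$. On $|u|>a$ one has $\rho u-F_a(u)=|u|^q(1-\frac1q)+\frac{a^q}{q}>0$; on $|u|\le a$, $F_a(u)=0$ and $\rho u\ge0$. So the sign is fine.

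**Step 4: conclude.** Once concentration is excluded, we have $\|u_n-u\|_{1^*}\to0$, which gives the $(\rm PS)_c$ condition in $L^{1^*}(\Omega)$.
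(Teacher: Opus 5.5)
Your argument is correct in substance, but it reaches the key estimate by a different route from the paper. The paper never uses the energy of the weak limit. It applies the pointwise inequality $\rho v-F_a(v)\ge 0$ (for $\rho\in\partial F_a(v)$) along the sequence itself, so the $(\rm PS)$ identity gives $\frac1N\|u_n\|_{1^*}^{1^*}\le J_{\lambda,a}(u_n)+o_n(1)$. Hence $\|R_k(u_n)\|_{1^*}^{1^*-1}\le\|u_n\|_{1^*}^{1^*-1}\le (Nc)^{1/N}+o_n(1)<S$. With the tails a priori below the Sobolev threshold, $S-\|R_k(u_n)\|_{1^*}^{1^*-1}$ stays bounded away from zero. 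Item $(b)$ of Lemma \ref{auxlemma} together with $\Psi(v)\ge S\|v\|_{1^*}$ then gives $\limsup_n\|R_k(u_n)\|_{1^*}\le\varepsilon$ at once, with no dichotomy, no item $(a)$ and no splitting of $\|u_n\|_{1^*}^{1^*}$. You use $(b)$ only to produce the dichotomy, and exclude the concentrating branch by a Brezis--Nirenberg energy splitting plus $J_{\lambda,a}(u)\ge0$. This needs more input: item $(a)$, and \eqref{eqnorm} and $\rho\in\partial F_a(u)$ for the limit. In exchange, via Brezis--Lieb, it yields the sharper identity $c=J_{\lambda,a}(u)+\frac1N\lim_n\|u_n-u\|_{1^*}^{1^*}$, which quantifies exactly the mass that can be lost. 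Note also that your splitting together with $J_{\lambda,a}(u)\ge 0$ already gives $\limsup_n\|R_k(u_n)\|_{1^*}^{1^*}\le Nc+o_k(1)$. For $k$ large the paper's last step then applies verbatim, and the case analysis of Step 2 becomes superfluous.

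Two justifications should be fixed.

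First, in Step 3 the tail does not satisfy $\Psi\approx\|\cdot\|_{1^*}^{1^*}$ by ``(a) and (b) combined''. Item $(b)$ is only an upper bound on $\Psi(R_k(u_n))-\|R_k(u_n)\|_{1^*}^{1^*}$, whereas splitting $J_{\lambda,a}(u_n)$ into truncated and tail parts would need the lower bound. The inequality you want comes directly from $(a)$. Since $\int_\Omega F_a(u_n)\,dx\to\int_\Omega F_a(u)\,dx$, the limit $\lim_n\|u_n\|_{1^*}^{1^*}$ exists and $c=J_{\lambda,a}(u)+\frac1N\big(\lim_n\|u_n\|_{1^*}^{1^*}-\|u\|_{1^*}^{1^*}\big)$. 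The superadditivity $|s|^{1^*}\ge|T_k(s)|^{1^*}+|R_k(s)|^{1^*}$ together with $(T_2)$ gives $\lim_n\|u_n\|_{1^*}^{1^*}\ge\|T_k(u)\|_{1^*}^{1^*}+\limsup_n\|R_k(u_n)\|_{1^*}^{1^*}$, so the error terms have the right sign.

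Second, you need not assert that $u$ is a critical point of $J_{\lambda,a}$, and Lemma \ref{incgrad} would not let you, since its inclusion $\partial I_a\subseteq\partial\mathcal I_a$ goes the wrong way. What you actually use is \eqref{eqnorm} together with $\rho=\widetilde\rho\in\partial\mathcal I_a(u)\subseteq\partial F_a(u)$ a.e., both of which come from the proof of Lemma \ref{auxlemma}.

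Finally, your trace-inclusive inequality $\Psi(v)\ge S\|v\|_{1^*}$ is the correct reading of $S$. The paper's displayed definition omits the boundary term and, taken literally, gives $S=0$.
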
	
\begin{proof}
	Let $(u_n)$ be a $(\rm PS)_c$ sequence for $J_{\lambda,a}$, with $c \in (0,\frac{1}{N}S^N)$. This means
		\begin{equation}\label{convc}
			J_{\lambda,a}(u_n) \to c < \dfrac{1}{N} S^N, \; \; \mbox{as} \ n \to \infty,
		\end{equation}
		and
		$$\int_\Omega |D u_n| + \int_{\partial\Omega} |u_n| d \mathcal{H}^{N-1} = \lambda \int_\Omega \rho_n u_n dx + \int_\Omega |u_n|^{1^*}dx + \int_\Omega \phi_n u_n dx,$$
		with $\rho_n \in \partial F_a(u_n)$ and $\phi_n = o_n(1)$, where we used Lemma \ref{caracPS}. Expanding $J_{\lambda,a}(u_n)$, we find
	\begin{eqnarray}\label{bound1}
		J_{\lambda,a}(u_n) &=& \int_\Omega |D u_n| + \int_{\partial\Omega} |u_n| d \mathcal{H}^{N-1} - \lambda \int_\Omega F_a(u_n) dx - \dfrac{1}{1^*} \int_\Omega |u_n|^{1^*}dx \nonumber \\
		&=& \left(1-\dfrac{1}{1^*}\right)\|u_n\|_{1^*}^{1^*} + \lambda \int_\Omega \big[\rho_n u_n - F_a(u_n)\big]dx + o_n(1).
	\end{eqnarray}
	By direct computations, using \eqref{GradFa} and \eqref{growthF}, we deduce 
	$$\lambda \int_\Omega \big[\rho_n u_n - F_a(u_n)\big]dx \ge 0,$$
which combined with \eqref{bound1} lead us to
	\begin{equation}\label{ineun}
		\left(1-\dfrac{1}{1^*}\right)\|u_n\|_{1^*}^{1^*} \le J_{\lambda,a}(u_n) + o_n(1).
	\end{equation}
Since $(u_n)$ is bounded in $BV(\Omega)$ (see Lemma \ref{auxlemma}) and $BV(\Omega) \hookrightarrow L^{1^*}(\Omega)$ continuously, we get
	$$\sup_{n \in \mathbb{N}} \|u_n\|_{1^*}^{1^*} < \infty.$$
Taking the limit as $n\to\infty$ in \eqref{ineun} and using \eqref{convc}, we conclude
	\begin{equation}\label{*4}
		\lim_{n \to \infty} \|u_n\|_{1^*}^{1^*} \le Nc < S^{N},
	\end{equation}
after passing to a subsequence if necessary. Since $L^{1^*}(\Omega)$ is reflexive, there exists $u \in L^{1^*}(\Omega)$ such that $u_n \rightharpoonup u$ in $L^{1^*}(\Omega)$. Furthermore, by compact embedding $BV(\Omega) \hookrightarrow L^1(\Omega)$, we further have $u_n(x) \to u(x)$ a.e. in $\Omega$. By combining $(R_1)$, $(R_2)$, $(T_1)$ and the lower semicontinuity of $\Psi$ we get
	\begin{eqnarray}
		\limsup_{k \to \infty} \Big(\Psi\big(R_k(u)\big) - \|R_k(u)\|_{1^*}^{1^*}\Big) &=& \limsup_{k \to \infty} \Big(\Psi(u) - \Psi\big(T_k(u)\big) - \|R_k(u)\|_{1^*}^{1^*}\Big) \nonumber \\
		&=& \limsup_{k \to \infty} \Big(\Psi(u) - \Psi\big(T_k(u)\big)\Big) \nonumber \\
		&\le& 0. \nonumber
	\end{eqnarray} 
Thus, given $\varepsilon>0$, there exists $k>0$ large enough, such that
	\begin{equation}\label{*5}
		\Psi\big(R_k(u)\big) - \|R_k(u)\|_{1^*}^{1^*} < \varepsilon \Big(S-(Nc)^{\frac{1}{N}}\Big).
	\end{equation}
In addition, for fixed $k>0$ satisfying the inequality \eqref{*5}, the definition of $R_k$ and \eqref{*4} imply that
	\begin{equation}\label{*6}
		\limsup_{n \to \infty} \|R_k(u)\|_{1^*}^{1^*-1} \le \limsup_{n \to \infty} \|u\|_{1^*}^{1^*-1} \le (Nc)^{\frac{1}{N}}.
	\end{equation}
	By definition of the constant $S$, we know that
	$$S \le \dfrac{\displaystyle \int_\Omega |DR_k(u_n)|}{\|R_k(u_n)\|_{1^*}},$$
	and then
	\begin{eqnarray}
		\Big(S-\|R_k(u_n)\|_{1^*}^{1^*-1}\Big)\|R_k(u_n)\|_{1^*} &\le& \int_\Omega |DR_k(u_n)| -\|R_k(u_n)\|_{1^*}^{1^*} \nonumber \\
		&\le& \int_\Omega |DR_k(u_n)|+ \int_{\partial\Omega} |R_k(u_n)|d\mathcal{H}^{N-1} -\|R_k(u_n)\|_{1^*}^{1^*} \nonumber \\
		&=& \Psi\big(R_k(u_n)\big) - \|R_k(u_n)\|_{1^*}^{1^*}, \nonumber
	\end{eqnarray}
	thus, combining this previous inequaliy with Lemma \ref{auxlemma}, \eqref{*5} and \eqref{*6} we get
	\begin{eqnarray}
		\limsup_{n \to \infty} \|R_k(u_n)\|_{1^*} &\le& \limsup_{n \to \infty}\dfrac{\Psi\big(R_k(u_n)\big) - \|R_k(u_n)\|_{1^*}^{1^*}}{S-\|R_k(u_n)\|_{1^*}^{1^*-1}} \nonumber \\
		&\le& \dfrac{\Psi\big(R_k(u)\big) - \|R_k(u)\|_{1^*}^{1^*}}{S-(Nc)^{\frac{1}{N}}} \nonumber \\
		&\le& \varepsilon, \nonumber
	\end{eqnarray}
for $k>0$ large enough. Finally, by $(T_2)$ we obtain
	$$\limsup_{n \to \infty} \|u_n-u\|_{1^*} \le \limsup_{n \to \infty} \|T_k(u_n) - T_k(u)\|_{1^*} + \limsup_{n \to \infty} \|R_k(u_n)\|_{1^*} + \|R_k(u)\|_{1^*} \le 2\varepsilon.$$
Since $\varepsilon>0$ is arbitrary, the last inequality ensures that
	$$u_n \to u, \; \; \mbox{in} \ L^{1^*}(\Omega),$$
as desired.
\end{proof}

Now, we will prove that the functional $J_{\lambda,a}$ satisfies the geometry of Theorem \ref{SP1}. More precisely, the following lemma holds.

\begin{lemma}\label{geometry}
	The functional $J_{\lambda,a}$ satisfies the following properties:
	\begin{enumerate}[$(a)$]
		\item There exist $\tau,r>0$ such that
			$$J_{\lambda,a}(u) \ge \tau, \; \; \mbox{for} \ \|u\|_{1^*}=r.$$
			
		\item For each $n \in \mathbb{N}$ it is possible to find a $n$-dimensional subspace $X_n$ of $L^{1^*}(\Omega)$, with
			$$J_{\lambda,a}(u) \to -\infty \; \; \mbox{as} \ \|u\|_{1^*}\to \infty,$$
		with $u \in X_n$.
	\end{enumerate}
\end{lemma}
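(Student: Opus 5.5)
The plan is to obtain both items from the continuous embedding $BV(\Omega)\hookrightarrow L^{1^*}(\Omega)$, the growth bound \eqref{growthF}, the sign of $F_a$, and the equivalence of norms on finite dimensional spaces. No delicate estimate should be needed, since the discontinuity of $F_a$ only enters through the harmless bounds $0\le F_a(s)\le |s|^q/q$.

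For item $(a)$, I will fix $u\in L^{1^*}(\Omega)$ with $\|u\|_{1^*}=r$ and split into two cases.
\begin{itemize}
\item If $u\notin BV(\Omega)$, then $J_{\lambda,a}(u)=+\infty$ because of the definition of $\Psi$, so there is nothing to prove.
\item If $u\in BV(\Omega)$, the continuous embedding $BV(\Omega)\hookrightarrow L^{1^*}(\Omega)$ gives a constant $C_1>0$ with $\Psi(u)=\|u\|_{BV(\Omega)}\ge C_1\|u\|_{1^*}$.
\end{itemize}
In the second case, by \eqref{growthF} and H\"older's inequality (recall $q<1^*$ and $\Omega$ is bounded),
$$\lambda\int_\Omega F_a(u)\,dx\le \frac{\lambda}{q}\|u\|_q^q\le \lambda C_2\|u\|_{1^*}^q .$$
Combining these bounds yields
$$J_{\lambda,a}(u)\ge C_1 r-\lambda C_2 r^{q}-\frac{1}{1^*}r^{1^*}.$$
Since $1<q<1^*$, the right-hand side equals $r\big(C_1-\lambda C_2 r^{q-1}-r^{1^*-1}/1^*\big)$. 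Choosing $r>0$ small, depending on $\lambda$ but not on $a$, makes the bracket at least $C_1/2$, so $\tau:=C_1r/2$ works.

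For item $(b)$, I will take $X_n$ to be an $n$-dimensional subspace of $C_0^\infty(\Omega)\subset BV(\Omega)\cap L^{1^*}(\Omega)$. On $X_n$ all norms are equivalent, so there is $C_n>0$ with $\Psi(u)=\int_\Omega|\nabla u|\,dx\le C_n\|u\|_{1^*}$ for every $u\in X_n$; the boundary term vanishes because these functions have compact support. By \eqref{growthFs>a}, $F_a\ge 0$, since $|s|^q-a^q\ge0$ whenever $|s|\ge a$. We may therefore simply drop the term $-\lambda\int_\Omega F_a(u)\,dx$ and obtain
$$J_{\lambda,a}(u)\le C_n\|u\|_{1^*}-\frac{1}{1^*}\|u\|_{1^*}^{1^*}\longrightarrow-\infty \quad \text{as } \|u\|_{1^*}\to\infty,\ u\in X_n.$$

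The only step requiring some care is item $(a)$: one must handle the points of the sphere $\{\|u\|_{1^*}=r\}$ lying outside $BV(\Omega)$, and one must use the embedding constant in the direction $\|u\|_{1^*}\le C\,\|u\|_{BV(\Omega)}$. Both issues are resolved as above, so I do not expect a genuine obstacle. The remaining point is to record that $r$ and $\tau$ can be taken independent of $a$, which will be useful later for the property $(M_2)$.
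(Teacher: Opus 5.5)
Your proof is correct and follows the paper's argument. In $(a)$ you use the embedding $BV(\Omega)\hookrightarrow L^{1^*}(\Omega)$, \eqref{growthF} and H\"older to get $J_{\lambda,a}(u)\ge C_1r-\lambda C_2r^q-r^{1^*}/1^*$, and in $(b)$ you use equivalence of norms on $X_n\subset C_0^\infty(\Omega)$; the only inessential difference is that you discard $-\lambda\int_\Omega F_a(u)\,dx$ via $F_a\ge 0$, whereas the paper bounds it by $\frac{\lambda}{q}\|u\|_q^q\le D_n\frac{\lambda}{q}\|u\|_{1^*}^q$ and absorbs it using $q<1^*$.
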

\begin{proof}
	\textit{Verification of $a)$}: First, if $u \in L^{1^*}(\Omega) \setminus BV(\Omega)$, by definition of $\Psi$ we have $J_{\lambda,a}(u)=+\infty$, which proves the geometry. In this case, we just need to consider $u \in BV(\Omega)$. By the continuous embeddings $BV(\Omega) \hookrightarrow L^{1^*}(\Omega) \hookrightarrow L^1(\Omega)$ and \eqref{growthF} we get
	\begin{eqnarray}
		J_{\lambda,a}(u) &=& \|u\|_{BV(\Omega)} - \dfrac{1}{1^*} \|u\|_{1^*}^{1^*} - \lambda\int_\Omega F_a(u)dx \nonumber \\
		&\ge& \|u\|_{BV(\Omega)} - \dfrac{1}{1^*} \|u\|_{1^*}^{1^*} - \dfrac{\lambda}{q}\|u\|_q^q \nonumber \\
		&\ge& C_1\|u\|_{{1^*}} - \dfrac{1}{1^*} \|u\|_{1^*}^{1^*} - C_2\dfrac{\lambda}{q}\|u\|_{1^*}^q. \nonumber
	\end{eqnarray}
	Since $1<q<1^*$ the last inequality allows us to conclude that there exist $r>0$ small enough such that
	$$J_{\lambda,a}(u) \ge \tau \; \; \mbox{for} \ \|u\|_{1^*}=r,$$
	where $\alpha \coloneqq   C_1r - \dfrac{1}{1^*} r^{1^*} - C_2\dfrac{\lambda}{q}r^q,$ and this proves the item $(a)$.

	\noindent \textit{Verification of $(b)$}: consider $X_n$ a $n$-dimensional subset of $L^{1^*}(\Omega)$, such that $X_n \subset C_0^\infty(\Omega)$. Thus, for each $u \in X_n$ we get
	\begin{eqnarray}
		J_{\lambda,a}(u) &=& \|u\|_{BV(\Omega)} - \dfrac{1}{1^*} \|u\|_{1^*}^{1^*} - \lambda\int_\Omega F_a(u)dx \nonumber \\
		&\stackrel{\eqref{growthF}}{\le}& \|u\|_{BV(\Omega)} - \dfrac{1}{1^*} \|u\|_{1^*}^{1^*} + \dfrac{\lambda}{q}\|u\|_q^q. \nonumber
	\end{eqnarray}
	Since in $X_n$ all the norms are equivalent, there exist positive constants $C_n,D_n$, which depend only on $n$, such that
	$$J_{\lambda,a}(u) \le C_n \|u\|_{1^*} - \dfrac{1}{1^*} \|u\|_{1^*}^{1^*} + D_n\dfrac{\lambda}{q}\|u\|_{1^*}^q,$$
	and the last inequality lead us to
	$$J_{\lambda,a}(u) \to -\infty \; \; \mbox{as} \ \|u\|_{1^*}\to \infty,$$
	provide that $1<q<1^*$, which completes the proof of item $(b)$, and lemma follows.
\end{proof}

The final task, before presenting the main result of this subsection, is to prove that in suitable conditions the functional $J_{\lambda,a}$ is below level $\frac{1}{N}S^N$.

\begin{lemma}\label{JPS}
	For each $n\in\mathbb{N}$, there exist $R_n>0$, sufficiently large, and $\lambda_n>0$  such that, for each $\lambda\in [\lambda_n,\infty)$, there is $a_\lambda>0$ satisfying
		$$\sup_{u \in S_n} J_{\lambda,a}(u) < \dfrac{1}{N} S^N,\quad  \forall a \in (0,a_\lambda),$$
	where $S_n= \overline{B}_{R_n}\cap X_n$, with $X_n$ satisfying Lemma \ref{geometry}.
\end{lemma}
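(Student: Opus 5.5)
The plan is to exploit the large parameter $\lambda$ in front of the $q$-power term, working directly with finite-dimensional estimates on $X_n\subset C_0^\infty(\Omega)$. Fix $n$ and the $n$-dimensional space $X_n$ from Lemma \ref{geometry}(b). Since all norms on $X_n$ are equivalent, there are constants $C_n,D_n,E_n>0$ with
$$\|u\|_{BV(\Omega)}\le C_n\|u\|_{1^*},\qquad \|u\|_q^q\ge D_n\|u\|_{1^*}^q,\qquad \|u\|_\infty\le E_n\|u\|_{1^*},\qquad u\in X_n.$$
Using \eqref{growthFs>a}, we have $F_a(s)\ge \frac{|s|^q}{q}-\frac{a^q}{q}$ for every $s$: for $|s|<a$ the right side is negative, and for $|s|\ge a$ equality holds. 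Consequently, for $u\in X_n$,
$$J_{\lambda,a}(u)\le C_n t-\frac{\lambda D_n}{q}t^q+\frac{\lambda a^q}{q}|\Omega|-\frac{1}{1^*}\|u\|_{1^*}^{1^*},\qquad t=\|u\|_{1^*}.$$
We drop the last term, which is nonpositive, and set $h_\lambda(t)=C_nt-\frac{\lambda D_n}{q}t^q$. Since $q>1$, $h_\lambda$ attains its maximum at $t_\lambda=(C_n/(\lambda D_n))^{1/(q-1)}$, and
$$\max_{t\ge0}h_\lambda=\Big(1-\frac1q\Big)C_n t_\lambda = c_n\lambda^{-1/(q-1)}\to0\quad\text{as }\lambda\to\infty.$$
We choose $\lambda_n$ so that $c_n\lambda^{-1/(q-1)}<\frac{1}{2N}S^N$ for all $\lambda\ge\lambda_n$. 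This bound is uniform in $u\in X_n$, so it holds on $S_n$ for any radius $R_n$.

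Next we choose $a_\lambda$ by setting $\frac{\lambda a_\lambda^q}{q}|\Omega|=\frac{1}{2N}S^N$, that is, $a_\lambda=(qS^N/(2N\lambda|\Omega|))^{1/q}$. Then for $a\in(0,a_\lambda)$ and $u\in S_n$ we obtain $J_{\lambda,a}(u)<\frac1{2N}S^N+\frac1{2N}S^N=\frac1NS^N$. The supremum is taken over the compact set $S_n$, and $J_{\lambda,a}$ is continuous on $X_n$ because all norms there are equivalent and every functional involved is continuous. Hence the supremum is attained, which keeps the inequality strict: the maximizer $u_0$ satisfies $J_{\lambda,a}(u_0)\le\max h_\lambda+\frac{\lambda a^q}{q}|\Omega|<\frac1NS^N$.

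The radius $R_n$ is chosen independently of $\lambda$ and $a$, large enough that $S_n$ can serve as $M_0$ in Theorem \ref{SP1}. We need $J_{\lambda,a}\le -c_0<0$ on $\partial S_n$ for all $\lambda\ge\lambda_n$ and all $a$. Since $F_a\ge0$ (for $|s|\ge a$, $|s|^q\ge a^q$), we have $J_{\lambda,a}(u)\le C_nt-\frac1{1^*}t^{1^*}$ on $X_n$, which is negative for $t\ge R_n$ with $R_n$ large and independent of $\lambda$ and $a$. This bound also gives the required $R_n$ in Lemma \ref{geometry}(b).

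The main obstacle is ensuring that the additive error $\lambda a^q|\Omega|/q$, which grows with $\lambda$, is compensated. This is why $a_\lambda$ must depend on $\lambda$ and shrink like $\lambda^{-1/q}$. We must also keep the order of the quantifiers as stated: first $n$, then $R_n$ and $\lambda_n$, then $\lambda$, and finally $a_\lambda$. A secondary point is to use the lower bound $F_a(s)\ge |s|^q/q-a^q/q$ rather than \eqref{growthF}, since the latter has the wrong direction here.
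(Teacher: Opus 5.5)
Your proof is correct and follows the same strategy as the paper. Norm equivalence on $X_n$ reduces $J_{\lambda,a}$ to $t\mapsto C_nt-\frac{\lambda D_n}{q}t^q$ plus an error term that vanishes as $a\to0^+$; the maximum $c_n\lambda^{-1/(q-1)}$ of that function is made small by taking $\lambda_n$ large, and then $a_\lambda$ is chosen to absorb the error.

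The only difference is that you use the global pointwise bound $F_a(s)\ge (|s|^q-a^q)/q$ instead of splitting $\Omega$ into $\Omega_a$ and $\Omega^a$. This is cleaner: it lets you apply norm equivalence to $\|u\|_q$ versus $\|u\|_{1^*}$ on all of $\Omega$. The paper instead appeals to finite dimensionality for the restricted quantities $\|u\|_{q,\Omega^a}$ and $\|u\|_{1^*,\Omega^a}$, and since $\Omega^a$ depends on $u$, that does not actually give a uniform constant $E_n$.
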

\begin{proof}
	Indeed, fixed $n\in\mathbb{N}$, since $X_n$ is finite dimensional, we find a constant $D_n>0$, which depends only on $n$, such that
	$$J_{\lambda,a}(u) \le D_n \|u\|_{1^*} - \lambda\int_{\Omega} F_a(u)dx,$$
	for any $u \in X_n$. Now, for simplicity of notation, for fixed $u \in X_n$, denote
	$$\Omega_a \coloneqq \{x \in \Omega \; \; : \;\; |u|<a\} \; \; \mbox{and} \; \; \Omega^a \coloneqq \{x \in \Omega \; \; : \;\; |u|\ge a\}.$$
	Thus, by \eqref{growthFs>a} and \eqref{growthF} we get
	\begin{eqnarray}
		J_{\lambda,a}(u) &\le& D_n\|u\|_{1^*,\Omega^a} + D_n\|u\|_{1^*,\Omega_a} - \lambda\int_{\Omega^a} F_a(u)dx  - \lambda\int_{\Omega_a} F_a(u)dx \nonumber \\
		&\le&  D_n\|u\|_{1^*,\Omega^a} + D_n\|u\|_{1^*,\Omega_a} - \lambda\int_{\Omega^a} \left(\dfrac{|u|^q}{q} - \dfrac{a^q}{q}\right)dx + \dfrac{\lambda}{q}\int_{\Omega_a} |u|^qdx \nonumber \\
		&\le& \left(D_n\|u\|_{1^*,\Omega^a} - \dfrac{\lambda}{q}\|u\|_{q,\Omega^a}^q\right) + \left(D_n\|u\|_{1^*,\Omega_a} +\lambda\dfrac{a^q}{q}|\Omega|+ \dfrac{\lambda}{q}\|u\|_{q,\Omega_a}^q\right) \nonumber \\
		&\le& D_n\|u\|_{1^*,\Omega^a} - \dfrac{\lambda}{q}\|u\|_{q,\Omega^a}^q + c_{\lambda,a}, \nonumber
	\end{eqnarray}
	where
		$$c_{\lambda,a}\coloneqq |\Omega| \left(D_n a + 2\dfrac{\lambda}{q} a^q \right)>0.$$
	Using again the fact that $X_n$ is finite dimensional, there exists $E_n>0$, depending only on $n$, such that
		$$J_{\lambda,a}(u) \le D_n \|u\|_{1^*,\Omega^a} - E_n\dfrac{\lambda}{q}\|u\|_{1^*,\Omega^a}^q + c_{\lambda,a},$$
	which implies that
		\begin{equation}\label{supJ}
			\sup_{u \in S_n} J_{\lambda,a}(u) \le \sup_{u \in S_n} \Big\{D_n \|u\|_{1^*,\Omega^a} - E_n\dfrac{\lambda}{q}\|u\|_{1^*,\Omega^a}^q + c_{\lambda,a}\Big\}.
		\end{equation}
	Defining the function $\zeta_{\lambda,a} : (0,+\infty) \to \mathbb{R}$ by
	$$\zeta_{\lambda,a}(t) = D_nt - E_n\dfrac{\lambda}{q}t^q+c_{\lambda,a},$$ 
	by a direct computation we can see that
		$$\max_{t \ge 0} \zeta_{\lambda,a}(t) = C_n \left(\dfrac{1}{\lambda}\right)^{\frac{1}{q-1}} + c_{\lambda,a},$$
	where $C_n>0$ depends only on $n$ and $q$. In view of the defintion of $c_{\lambda, a}$ above, it is possible to find $\lambda_n>0$ large enough such that, given a $\lambda\geq \lambda_n$, it holds
		$$\max_{t \ge 0} \zeta_{\lambda,a}(t) < \dfrac{1}{N}S^N, \quad a \in (0,a_\lambda),$$
	for a suitable $a_\lambda>0$. This, combined with \eqref{supJ}, ensures that
		$$\sup_{u \in S_n} J_{\lambda,a}(u) < \dfrac{1}{N}S^N.$$
\end{proof}

Finally, we are able to prove the mains results of this subsection. Let us start proving the existence of multiple solutions of the problem $(Q_{\lambda,a})$ as follows:

\begin{proof}[Proof of Theorem \ref{Main2}]
	By Lemmas \ref{PScSN}, \ref{geometry}, and \ref{JPS}, for any $n \in \mathbb{N}$, we can find $\lambda_n > 0$ sufficiently large and $a_\lambda > 0$ such that for a fixed $\lambda \in [\lambda_n, \infty)$, the functional $J_{\lambda,a}$ satisfies the hypotheses of Theorem \ref{SP1} with $Z = L^{1^*}(\Omega)$ and $V = X_n$. Hence, by Theorem \ref{SP1}, it follows that the problem $(Q_{\lambda,a})$ admits at least $n$ nontrivial solutions, as required.
	
	Finally, using similar arguments as in the proof of Theorem \ref{Main1}, we establish $(M_2)$. This completes the proof.
\end{proof}

To analyze the stability of solutions, we proceed as in Subsection \ref{Ap1}. Specifically, consider the problem
	$$
	\left \{
	\begin{array}{rclcl}
		-\Delta_1 u &=& \lambda|u|^{q-2}u + |u|^{1^*-2}u, &\mbox{in}& \Omega;\\
		u &=& 0, & \mbox{on}  & \partial\Omega.\\
	\end{array}
	\right.\eqno{(Q_{\lambda})}
	$$
It is possible to establish results analogous to Lemmas \ref{Lemma.monotonicity} and \ref{lemma:bounded} in the framework of the functional $J_{\lambda,a}$. Using these, we prove the following theorem:

\begin{theorem}
	Fixed any $n \in \mathbb{N}$ consider $\lambda \in [\lambda_n, \infty)$ and $u_{\lambda,a}$ a solution to $(Q_{\lambda,q})$ obtained through Theorem \ref{Main2}. Then, there exists $u_{\lambda} \in BV(\Omega)$, such that, as $a \to 0^+$,
		$$
		u_{\lambda,a} \to u_{\lambda}, \quad \mbox{in $L^r(\Omega)$ and a.e. in $\Omega$,}
		$$
	for $r \in [1,1^*)$. Moreover, $u_{\lambda}$ is a bounded variation solution of $(Q_{\lambda})$.
\end{theorem}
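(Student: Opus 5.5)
The plan follows the stability argument for $(P_\lambda)$ in Subsection \ref{Ap1}, applied to the critical problem. The main additional difficulty is the lack of compactness in $L^{1^*}(\Omega)$.

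\emph{Step 1: uniform bound on the energies.} Fix $n$ and $\lambda\ge\lambda_n$. For $0<a_1<a_2$ we have $F_{a_1}\ge F_{a_2}\ge 0$, because $F_a(s)=(|s|^q-a^q)^+/q$ is decreasing in $a$. Hence $J_{\lambda,a_1}\le J_{\lambda,a_2}$ on $BV(\Omega)$. The argument of Lemma \ref{Lemma.monotonicity} then shows that the minimax level $c_{j,a}$ is nondecreasing in $a$. Since the levels are also bounded above by $\sup_{S_n}J_{\lambda,a}<\frac1N S^N$ (Lemma \ref{JPS}), we get
\[
\tau\le J_{\lambda,a}(u_{\lambda,a})\le C<\tfrac1N S^N \quad\text{for all } a\in(0,a_\lambda),
\]
where $\tau$ is independent of $a$ by Lemma \ref{geometry}(a).

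\emph{Step 2: uniform $BV$ bound.} I would redo the estimate of Lemma \ref{auxlemma}, now with $\phi_n\equiv0$ since $u_{\lambda,a}$ is a critical point. Subtracting $\frac1q$ times the identity $\Psi(u)=\lambda\int\rho u+\int|u|^{1^*}$ and using \eqref{rFge} gives
\[
\Bigl(1-\tfrac1q\Bigr)\Psi(u_{\lambda,a})\le C+\tfrac{\lambda}{q}a^q|\Omega|.
\]
This bounds $(u_{\lambda,a})$ in $BV(\Omega)$ uniformly in $a$. By compact embedding, along a sequence $a_k\to0^+$ we obtain $u_{\lambda,a}\to u_\lambda$ in $L^r(\Omega)$ for $r<1^*$, $u_{\lambda,a}\to u_\lambda$ a.e., and $u_{\lambda,a}\rightharpoonup u_\lambda$ in $L^{1^*}(\Omega)$.

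\emph{Step 3: passing to the limit in the equation.} Let $\rho_a\in\partial F_a(u_{\lambda,a})$ be the selections given by the critical point condition. They satisfy $|\rho_a|\le|u_{\lambda,a}|^{q-1}$, so they are bounded in $L^N(\Omega)$. By \eqref{GradFa}, at a.e. point where $u_\lambda\ne0$ we have $|u_{\lambda,a}|>a$ for small $a$, hence $\rho_a\to|u_\lambda|^{q-2}u_\lambda$ there; where $u_\lambda=0$ we have $|\rho_a|\le|u_{\lambda,a}|^{q-1}\to0$. Vitali's theorem then gives $\rho_a\to|u_\lambda|^{q-2}u_\lambda$ in $L^{p}(\Omega)$. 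The critical term $|u_{\lambda,a}|^{1^*-2}u_{\lambda,a}$ converges weakly in $L^N(\Omega)$ to $|u_\lambda|^{1^*-2}u_\lambda$ because of a.e. convergence plus boundedness. Define
\[
w_a\coloneqq\lambda\rho_a+|u_{\lambda,a}|^{1^*-2}u_{\lambda,a}\in\partial\Psi(u_{\lambda,a}).
\]
Then $w_a\rightharpoonup w\coloneqq \lambda|u_\lambda|^{q-2}u_\lambda+|u_\lambda|^{1^*-2}u_\lambda$ in $L^N(\Omega)$. Lemma \ref{DiMa} now yields $u_\lambda\in BV(\Omega)$ and $w\in\partial\Psi(u_\lambda)$, and Lemma \ref{realiza} produces the field ${\bf z}$ with $\|{\bf z}\|_\infty\le1$, the energy identity, and $-{\rm div}\,{\bf z}=w$. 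Hence $u_\lambda$ is a bounded variation solution of $(Q_\lambda)$. This route bypasses the pairing identities $({\bf z},Du)=|Du|$ and the boundary condition, which are otherwise handled as in \cite[Lemmas 4.2 and 4.3]{PSJ}.

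\emph{Step 4: nontriviality, the main obstacle.} The limit could a priori vanish, since the $L^{1^*}$ mass may concentrate. I would first try to exclude this as in the proof of Lemma \ref{PScSN}. Since the levels stay below $\frac1N S^N$, the truncation estimates with $R_k$ and $T_k$ (Lemma \ref{auxlemma}(b) adapted to the family in $a$, where $\rho_a u_{\lambda,a}$ converges strongly by Step 3) give $u_{\lambda,a}\to u_\lambda$ in $L^{1^*}(\Omega)$. This uses the identity
\[
\Psi(u_{\lambda,a})-\|u_{\lambda,a}\|_{1^*}^{1^*}=\lambda\int_\Omega\rho_a u_{\lambda,a}\,dx\longrightarrow \Psi(u_\lambda)-\|u_\lambda\|_{1^*}^{1^*}.
\]
Consequently $J_{\lambda,a}(u_{\lambda,a})\to J_\lambda(u_\lambda)\ge\tau>0$, because $\int F_a(u_{\lambda,a})\to\frac1q\int|u_\lambda|^q$. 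In particular $u_\lambda\ne0$, and the convergence also holds in $L^{1^*}(\Omega)$.
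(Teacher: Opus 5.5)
Your proof is correct. Steps 1 and 2 match what the paper indicates, namely analogues of Lemmas \ref{Lemma.monotonicity} and \ref{lemma:bounded}. In Step 3, however, you identify the limit equation by a different route.

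The paper only says to proceed as in Subsection \ref{Ap1}. That means keeping the fields ${\bf z}_{\lambda,a}$, extracting a weak-$*$ limit ${\bf z}_\lambda$, and passing to the limit in $-{\rm div}\,{\bf z}_{\lambda,a}$ in $\mathcal{D}'(\Omega)$. One must then verify $({\bf z}_\lambda,Du_\lambda)=|Du_\lambda|$ and $-u_\lambda[{\bf z}_\lambda,\nu]=|u_\lambda|$ on $\partial\Omega$ as in \cite[Lemmas 4.2 and 4.3]{PSJ}.

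You instead pass to the limit in the inclusion $w_a\in\partial\Psi(u_{\lambda,a})$ using Lemma \ref{DiMa}, and then let Lemma \ref{realiza} produce a new field for $u_\lambda$. This handles the critical term cleanly. In the paper's route, the pairing identity requires passing to the limit in expressions such as $\int_\Omega\varphi\, w_a u_{\lambda,a}\,dx$. Their critical part $\int_\Omega\varphi|u_{\lambda,a}|^{1^*}dx$ need not converge under mere weak $L^{1^*}$ convergence, so the analogy with the subcritical Subsection \ref{Ap1} is not automatic. The truncation argument behind Lemma \ref{DiMa} only needs $T_k(u_{\lambda,a})\to T_k(u_\lambda)$ strongly. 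What the paper's route gives in addition is that the limit field is the weak-$*$ limit of the approximating fields.

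Your Step 4 (strong $L^{1^*}$ convergence and $u_\lambda\neq 0$) goes beyond both the statement and the paper, and it works with two small adjustments.
\begin{enumerate}
\item Get the uniform gap below $\frac1N S^N$ by fixing $a_0<a_\lambda$ and using $J_{\lambda,a}\le J_{\lambda,a_0}$ for $a\le a_0$. Near $a_\lambda$ the bound from Lemma \ref{JPS} need not be uniform, so do not claim it on all of $(0,a_\lambda)$.
\item Take $S$ to be the Sobolev constant for $\int_\Omega|Du|+\int_{\partial\Omega}|u|\,d\mathcal{H}^{N-1}$. The infimum written before Lemma \ref{PScSN} omits the trace term and is therefore $0$ (test with constants), which would make the truncation estimate empty.
\end{enumerate}
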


\end{document}